\newcommand{\R}{{\Bbb R}}
\newcommand{\C}{{\Bbb C}}
\newcommand{\proofbegin}{\noindent{\it Proof.\quad}}
\newcommand{\proofend}{\hfill$\Box$\bigskip}
\newcommand{\tr}{\text{\upshape tr\,}}
\newcommand{\diag}{\text{\upshape diag\,}}
\newcommand{\re}{\text{\upshape Re\,}}
\DeclareMathOperator{\dist}{dist}
\def\XXint#1#2#3{{\setbox0=\hbox{$#1{#2#3}{\int}$}
\vcenter{\hbox{$#2#3$}}\kern-.5\wd0}}
\newtheorem{theorem}{Theorem}[section]
\newtheorem{proposition}[theorem]{Proposition}
\newtheorem{corollary}[theorem]{Corollary}
\newtheorem{lemma}[theorem]{Lemma}
\newtheorem{definition}[theorem]{Definition}
\newtheorem{assumption}[theorem]{Assumption}
\newtheorem{remark}[theorem]{Remark}
\newtheorem{RHproblem}[theorem]{RH problem}
\newtheorem{figuretext}{Figure}
\numberwithin{equation}{section}
\title[The ``good'' Boussinesq equation]
{The ``good'' Boussinesq equation: \\a Riemann-Hilbert approach}
\author{C. Charlier and J. Lenells}
\address{Department of Mathematics, KTH Royal Institute of Technology, \\ 100 44 Stockholm, Sweden.}
\email{cchar@kth.se}
\email{jlenells@kth.se}
\begin{document}

\begin{abstract} 
We develop an inverse scattering transform formalism for the ``good'' Boussinesq equation on the line. Assuming that the solution exists, we show that it can be expressed in terms of the solution of a $3 \times 3$ matrix Riemann-Hilbert problem. The Riemann-Hilbert problem is formulated in terms of two reflection coefficients whose definitions involve only the initial data, and it has a form which makes it suitable for the evaluation of long-time asymptotics via Deift-Zhou steepest descent arguments.
\end{abstract}

\maketitle

\noindent
{\small{\sc AMS Subject Classification (2010)}: 35G25, 35Q15, 37K15.}

\noindent
{\small{\sc Keywords}: Spectral analysis, Boussinesq equation, Riemann-Hilbert problem, inverse scattering transform, initial value problem.}


\section{Introduction}
About 150 years ago, the French mathematician Joseph Boussinesq derived an equation for shallow water waves propagating in a rectangular channel \cite{B1872}. In nondimensional units, this equation---now known as the Boussinesq equation---takes the form
\begin{align}\label{badboussinesq}
  u_{tt} - u_{xx} - (u^2)_{xx} - u_{xxxx} = 0,
\end{align}
where $u(x,t)$ is a real-valued function and subscripts denote partial derivatives, see \cite[Eq. (26)]{B1872}. Equation (\ref{badboussinesq}) also describes ion sound waves in a plasma \cite{S1974} and lattice waves in the continuum approximation of the Fermi-Pasta-Ulam problem \cite{ZK1965, Z1974}.

The sign of the term $(u^2)_{xx}$ in (\ref{badboussinesq}) can be reversed by replacing $u$ by $-u$. Moreover, replacing $u$ by $u - 1$ switches the sign of the $u_{xx}$ term. The sign of the $u_{xxxx}$ term is more fundamental. In fact, since the $u_{tt}$ and $u_{xxxx}$ terms in (\ref{badboussinesq}) have opposite signs, equation (\ref{badboussinesq}) is linearly ill-posed and is therefore sometimes referred to as the ``bad'' Boussinesq equation. This is in contrast to the ``good'' Boussinesq equation
\begin{align}\label{goodboussinesq}
  u_{tt} - u_{xx} + (u^2)_{xx} + u_{xxxx} = 0,
\end{align}
in which the $u_{tt}$ and $u_{xxxx}$ terms have the same sign. Equation (\ref{goodboussinesq}) models the nonlinear dynamics of waves in a weakly dispersive medium and is also known as the ``nonlinear string equation'' \cite{FST1983}. 

Both the good and the bad Boussinesq equations are integrable. In fact, explicit formulas for the multisoliton solutions of (\ref{badboussinesq}) were found by Hirota using the bilinear transformation method \cite{H1973}, a Lax pair was presented in \cite{Z1974}, and an inverse scattering scheme was outlined in \cite{ZS1974}. Hirota's bilinear method was further used in \cite{CD2017} to derive rational solutions of (\ref{badboussinesq}). 
First results on the well-posedness of (\ref{goodboussinesq}) were presented in \cite{BS1988} where it was shown to be locally well-posed for initial data $u(x,0)$ and $u_t(x,0)$ in $H^s(\R) \times H^{s-2}(\R)$ with $s > 5/2$. The initial-boundary value problem for (\ref{goodboussinesq}) on the half-line was studied in \cite{HM2015}.

The $u_{xx}$ terms in (\ref{badboussinesq}) and (\ref{goodboussinesq}) can be removed by replacing $u$ by $u - \frac{1}{2}$ and $u + \frac{1}{2}$, respectively, and the two equations then reduce to
\begin{align}\label{boussinesqnouxx}
  u_{tt}  + \sigma((u^2)_{xx} + u_{xxxx}) = 0, \qquad \sigma = \pm 1,
\end{align}
where $\sigma =1$ and $\sigma = -1$ correspond to the good and bad versions, respectively. An inverse scattering transform formalism for the solution on the line of (\ref{boussinesqnouxx}) with $\sigma = -1$ has been developed by Deift, Tomei, and Trubowitz \cite{DTT1982}. 

In this paper, we develop an inverse scattering transform formalism for the solution of (\ref{boussinesqnouxx}) with $\sigma = 1$. 
For later convenience, we will rescale the coefficients in (\ref{boussinesqnouxx}) slightly and consider the following equation:
\begin{align}\label{boussinesq}
& u_{tt} + \frac{4}{3} (u^2)_{xx} + \frac{1}{3} u_{xxxx} = 0.
\end{align}
Following \cite{Z1974, DTT1982}, equation (\ref{boussinesq}) can be rewritten as the system
\begin{align}\label{boussinesqsystem}
& \begin{cases}
 v_{t} + \frac{1}{3}u_{xxx} + \frac{4}{3}(u^{2})_{x} = 0,
 \\
 u_t = v_x,
\end{cases}
\end{align}
which is equivalent to (\ref{boussinesq}) provided that the initial data $u_1(x) := u_t(x,0)$ satisfy
\begin{align}\label{u1zeromean}
\int_\R u_1(x) dx = 0,
\end{align}
see Lemma \ref{systemlemma} for details. The system (\ref{boussinesqsystem}) admits the Lax pair representation  \cite{Z1974}
\begin{align}\label{operatorLaxpair}
\mathsf{L}_t + [\mathsf{L}, \mathsf{A}] = 0,
\end{align}
where the Lax operators $\mathsf{L}$ and $\mathsf{A}$ are defined by
\begin{align}\label{LAdef}
\mathsf{L} = \partial_x^3 + 2u\partial_x + u_x + v, \qquad
\mathsf{A} = \partial_x^2 + \frac{4u}{3}.
\end{align}

Assuming that the solution of (\ref{boussinesqsystem}) exists, we will show that it can be expressed in terms of the solution of a $3 \times 3$ matrix Riemann-Hilbert (RH) problem whose jump matrix is expressed in terms of two reflection coefficients $r_1(k)$ and $r_2(k)$. The fact that the RH problem involves $3 \times 3$ matrices is related to the fact that the operator $\mathsf{L}$ in (\ref{LAdef}) is third order. 
As usual in the implementation of the inverse scattering transform, the reflection coefficients are defined in terms of the initial data via linear integral equations. For simplicity, we will restrict ourselves to smooth solitonless solutions which have rapid decay as $|x| \to \infty$. Our main results are stated in Theorem \ref{r1r2th} and Theorem \ref{RHth} and can be summarized as follows: 
\begin{enumerate}[$-$]

\item Theorem \ref{r1r2th} studies the map from the initial data $\{u(x,0), v(x,0)\}$ to the scattering data $\{r_1(k), r_2(k)\}$. In particular, it establishes several properties of the functions $r_1(k)$ and $r_2(k)$, such as their behavior as $k \to 0$. 

\item Theorem \ref{RHth} shows that the solution $\{u(x,t), v(x,t)\}$ of (\ref{boussinesqsystem}) can be recovered from the solution $M(x,t,k)$ of a $3 \times 3$ matrix RH problem via the relations
\begin{align}\label{recoveruv}
\begin{cases}
 \displaystyle{u(x,t) = -\frac{3}{2}\frac{\partial}{\partial x}\lim_{k\to \infty}k\big[(M(x,t,k))_{33} - 1\big],}
	\vspace{.1cm}\\
 \displaystyle{v(x,t) = -\frac{3}{2}\frac{\partial}{\partial t}\lim_{k\to \infty}k\big[(M(x,t,k))_{33} - 1\big].}
\end{cases}
\end{align}
The jump contour $\Gamma$ of this RH problem consists of the three lines $\R \cup \omega \R \cup \omega^2 \R$ where $\omega = e^{2\pi i/3}$, see Figure \ref{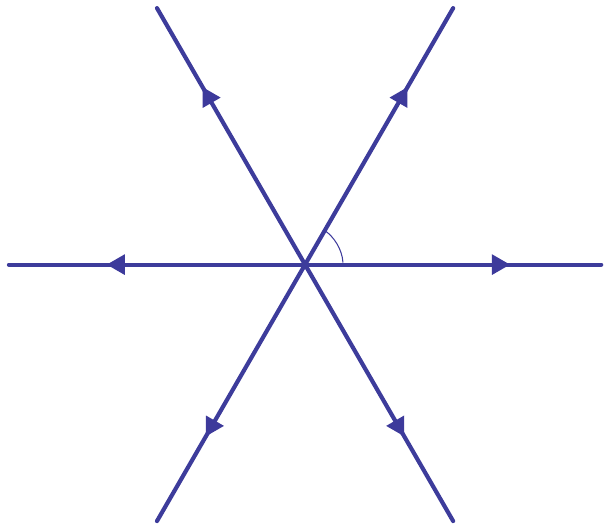}, and the jump matrix is given explicitly in terms of $r_1(k)$ and $r_2(k)$, see (\ref{vdef}).
\end{enumerate}
The above theorems are formulated for the system (\ref{boussinesqsystem}). As corollaries, we obtain analogous results for equation (\ref{boussinesq}) provided that $u$ satisfies (\ref{u1zeromean}). 
Moreover, if $u$ satisfies (\ref{boussinesq}) then 
\begin{align}\label{fromboussinesqtogoodboussinesq}
  \tilde{u}(x,t) = \frac{4}{\sqrt{3}}u\bigg(\frac{x}{3^{1/4}},t\bigg) + \frac{1}{2}
\end{align}
satisfies (\ref{goodboussinesq}). Hence, as corollaries, we also obtain results for equation (\ref{goodboussinesq}) under the finite background density assumption that the solution approaches $1/2$ as $x \to \pm \infty$. 
Finite density boundary conditions have been frequently studied for integrable equations, see e.g. \cite{FT2007}.

\begin{figure}
\begin{center}
 \begin{overpic}[width=.6\textwidth]{Gamma.pdf}
  \put(101,42.5){\small $\Gamma$}
 \put(56,47){\small $\pi/3$}
 \put(80,60){\small $D_1$}
 \put(48,74){\small $D_2$}
 \put(17,60){\small $D_3$}
 \put(17,25){\small $D_4$}
 \put(48,12){\small $D_5$}
 \put(80,25){\small $D_6$}
  \put(81,38.7){\small $1$}
 \put(67.7,69){\small $2$}
 \put(30,69){\small $3$}
 \put(18,38.7){\small $4$}
 \put(30.5,16){\small $5$}
 \put(67.5,16){\small $6$}
   \end{overpic}
     \begin{figuretext}\label{Gamma.pdf}
       The contour $\Gamma$ and the open sets $D_n$, $n = 1, \dots, 6$, which decompose the complex $k$-plane.
     \end{figuretext}
     \end{center}
\end{figure}

\begin{remark}
The assumption (\ref{u1zeromean}) ensures that the integral $\int_\R u dx$ does not grow linearly but is conserved in time. Indeed, assuming that $u$ has sufficient smoothness and decay and letting $u_0(x) := u(x,0)$, it follows immediately from (\ref{boussinesq}) that 
\begin{align}\label{intuintu1}
\frac{d^2}{dt^2}\int_\R u dx = 0, \quad \text{i.e.} \quad \int_\R u dx = \bigg(\int_\R u_1 dx\bigg)t + \int_\R u_0 dx.
\end{align}
\end{remark}

As usual in the implementation of the inverse scattering transform, the RH solution $M$ appearing in the expressions (\ref{recoveruv}) for $u$ and $v$ is constructed via a spectral analysis of the associated Lax operator $\mathsf{L}$ in (\ref{LAdef}). 
In the case at hand, it turns out that the spectral analysis of $\mathsf{L}$ naturally leads to eigenfunctions which are singular at the origin of the spectral $k$-plane. 
More precisely, we will show that $M$ has a double pole at $k = 0$ for generic initial data (for general initial data, it has {\it at most} a double pole at $k = 0$). 
This has the effect that the RH problem for $M$ has a somewhat unusual form which is singular at the origin. Nevertheless, by prescribing the structure of the behavior both at the origin and at infinity, we can still ensure uniqueness of the solution of the RH problem for $M$. In fact, the handling of the singularity at $k = 0$ is one of the main difficulties in the proof of Theorem \ref{RHth}. 

Apart from the immediate goal of implementing the inverse scattering transform for the good Boussinesq equation (\ref{boussinesq}), one of the main driving forces behind the present work was the larger objective of obtaining detailed asymptotic formulas for the solution of the (good and bad) Boussinesq equation. 
In the 1990s, Deift and Zhou introduced a steepest descent method for RH problems, which is particularly well-suited for the evaluation of asymptotics for integrable PDEs \cite{DZ1993}. Although the Deift-Zhou method by now has been successfully applied to a large number of asymptotic problems for nonlinear integrable PDEs, the question of finding the long-time asymptotics for the different versions of the Boussinesq equation remains an outstanding problem \cite{D2008}. 
There are several reasons why the analysis of the Boussinesq equation is considerably more complicated than the analysis of many other integrable equations, such as the nonlinear Schr\"odinger equation. One major reason is that the spectral problem is third-order; another is that the eigenfunctions have additional singularities (in our case at the origin). 

One of the main advantages of the solution representation featured in Theorem \ref{RHth} is that it is conducive to the evaluation of long-time asymptotics via Deift-Zhou steepest descent arguments. In fact, in \cite{CLWasymptotics}, the representation of Theorem \ref{RHth} together with a steepest descent analysis are used to establish asymptotic formulas for the solution of (\ref{boussinesq}). 


\subsection{Outline of the paper}
The main results (Theorem \ref{r1r2th} and Theorem \ref{RHth}) are stated in Section \ref{mainsec}. 
In Section \ref{specsec}, we begin the spectral analysis. We first transform the third-order spectral problem $\mathsf{L}\varphi = \lambda \varphi$ into a convenient matrix form which makes the underlying symmetries manifest. We then construct eigenfunctions $X$ and $Y$ of this matrix equation which are normalized at $x = +\infty$ and $x = -\infty$, respectively, and we study the scattering matrix $s(k)$ which relates $X$ and $Y$.
Since the spectral problem is third-order, the eigenfunctions $X$ and $Y$ alone are not sufficient for the formulation of a RH problem (the analyticity domains of their columns are not large enough to cover the whole complex plane). We therefore proceed as in \cite{L3x3} and define two further eigenfunctions $X^A$ and $Y^A$ and an associated scattering matrix $s^A$; if the initial data have compact support, $X^A$ is the inverse transpose of $X$, but this is not the case for general initial data. Section \ref{specsec} concludes with the proof of Theorem \ref{r1r2th}.

In Section \ref{Msec}, we define the $3\times 3$ matrix valued function $M$ as the solution of a Fredholm integral equation. In Section \ref{RHsec}, we complete the proof of Theorem \ref{RHth} by relating $M$ to the functions $X$, $Y$, $X^A$, $Y^A$ and showing that it satisfies a $3\times 3$ matrix RH problem. Section \ref{numericalsec} presents some numerical verifications of the results of the earlier sections. 
The proof of uniqueness for the RH problem for $M$ is postponed to an appendix. 

\section{Main results}\label{mainsec}
Our results are formulated in terms of two spectral functions $r_1(k)$  and $r_2(k)$, which can be viewed as the ``reflection coefficients'' for the system (\ref{boussinesqsystem}) determined by the initial data 
$$u_0(x) := u(x,0), \qquad v_0(x) := v(x,0).$$ 
The functions $r_1(k)$  and $r_2(k)$ can also be thought of as nonlinear Fourier transforms of the initial data. They are used to formulate a RH problem from which the solution $\{u(x,t), v(x,t)\}$ can be obtained. We first consider the direct problem, i.e., the construction of $\{r_j(k)\}_1^2$ in terms of the initial data $\{u_0(x), v_0(x)\}$.

\subsection{The direct problem}
Let $\mathcal{S}(\R)$ denote the Schwartz class of rapidly decaying functions on the real line.
Let $u_0, v_0 \in \mathcal{S}(\R)$ be two real-valued functions. The reflection coefficients $r_1(k)$ and $r_2(k)$ associated to $u_0, v_0$ are defined as follows (we refer to Section \ref{specsec} for the origin of the following definitions and for full proofs that the introduced functions are well-defined).

Let $\omega := e^{\frac{2\pi i}{3}}$ and define $\{l_j(k), z_j(k)\}_{j=1}^3$ by
\begin{align}\label{lmexpressions intro}
&l_j(k) = \omega^j k, \quad z_j(k) = \omega^{2j} k^{2}, \qquad k \in \C.
\end{align}
Let the matrix-valued function $\mathsf{U}(x,k)$ be given by
\begin{align}\label{mathsfUdef intro}
\mathsf{U}(x,k) = P(k)^{-1} \begin{pmatrix}
0 & 0 & 0 \\
0 & 0 & 0 \\
-v_0(x)-u_{0x}(x)  & -2u_0(x) & 0
\end{pmatrix} P(k),
\end{align} 
where
\begin{align}\label{Pdef intro}
P(k) = \begin{pmatrix}
\omega & \omega^{2} & 1  \\
\omega^{2} k & \omega k & k \\
k^{2} & k^{2} & k^{2}
\end{pmatrix}.
\end{align}
Define the $3 \times 3$-matrix valued eigenfunctions $X(x,k)$ and $X^A(x,k)$ as the unique solutions of the Volterra integral equations
\begin{subequations}\label{XXAdef intro}
\begin{align}  
 & X(x,k) = I - \int_x^{\infty} e^{(x-x')\widehat{\mathcal{L}(k)}} (\mathsf{U}X)(x',k) dx',
	\\\label{XXAdefb intro}
 & X^A(x,k) = I + \int_x^{\infty} e^{-(x-x')\widehat{\mathcal{L}(k)}} (\mathsf{U}^T X^A)(x',k) dx',	
\end{align}
\end{subequations}
where $\mathcal{L} = \diag(l_1 , l_2 , l_3)$, $\hat{\mathcal{L}}$ denotes the operator which acts on a $3 \times 3$ matrix $A$ by $\hat{\mathcal{L}}A = [\mathcal{L}, A]$ (i.e. $e^{\hat{\mathcal{L}}}A = e^\mathcal{L} A e^{-\mathcal{L}}$), and $\mathsf{U}^T$ denotes the transpose of $\mathsf{U}$. Define $s(k)$ and $s^A(k)$ by 
\begin{align}\label{sdef intro}
& s(k) = I - \int_\R e^{-x\widehat{\mathcal{L}(k)}}(\mathsf{U}X)(x,k)dx,
 	\\ \label{sAdef intro}
& s^A(k) = I + \int_\R e^{x\widehat{\mathcal{L}(k)}}(\mathsf{U}^T X^A)(x,k)dx.
\end{align}
The two spectral functions $\{r_j(k)\}_1^2$ are defined by
\begin{align}\label{r1r2def}
\begin{cases}
r_1(k) = \frac{(s(k))_{12}}{(s(k))_{11}}, & k \in (0,\infty),
	\\ 
r_2(k) = \frac{(s^A(k))_{12}}{(s^A(k))_{11}}, \quad & k \in (-\infty,0).
\end{cases}
\end{align}	

\subsubsection{Assumption of no solitons}
We will show in Proposition \ref{sprop} that the entries $(s(k))_{11}$ and $(s(k))_{12}$ of $s(k)$ that appear in (\ref{r1r2def}) are smooth functions of $k \in (0,\infty)$. Similarly, we will show in Proposition \ref{sAprop} that the entries $(s^A(k))_{11}$ and $(s^A(k))_{12}$ in (\ref{r1r2def}) are smooth functions of $k \in (-\infty,0)$. Thus $r_1(k)$ and $r_2(k)$ are smooth on their respective domains, except possibly at points where $s_{11}$ and $s^A_{11}$ have zeros. The possible zeros of $s_{11}$ and $s^A_{11}$ are related to the presence of solitons. In this paper, we will only consider solitonless (i.e., pure radiation) solutions; it is well-known how to handle the case when solitons are present by considering a RH problem with poles and corresponding residue conditions, see e.g. \cite{L3x3} for a $3 \times 3$ matrix case. 

Propositions \ref{sprop} and \ref{sAprop} imply that $s_{11}$ and $s^A_{11}$ have analytic continuations to $D_1$ and $D_4$, respectively, where $D_n$, $n = 1, \dots, 6$, are the open sectors of the complex $k$-plane displayed in Figure \ref{Gamma.pdf}. Our main results will be stated under the following assumption.

\begin{assumption}[Absence of solitons]\label{solitonlessassumption}\upshape
Assume that $(s(k))_{11}$ and $(s^A(k))_{11}$ are nonzero for $k \in \bar{D}_1\setminus \{0\}$ and $k \in \bar{D}_4\setminus \{0\}$, respectively.
\end{assumption}

\subsubsection{Assumption of generic behavior at $k = 0$}
It turns out that each of the four functions $s_{11}$, $s_{12}$, $s^A_{11}$, and $s^A_{12}$ that appear in (\ref{r1r2def}) has at most a double pole at $k = 0$ (see Propositions \ref{sprop} and \ref{sAprop}). Moreover, $s_{12}$ has a double pole if and only if $s_{11}$ has a double pole, and $s^A_{12}$ has a double pole if and only if $s^A_{11}$ has a double pole, see (\ref{spm2p}) and (\ref{sApm2p}). For simplicity, we will restrict ourselves to the generic case in which all of these four functions have double poles. Our main results will therefore be stated under the following assumption.

\begin{assumption}[Generic behavior at $k = 0$]\label{originassumption}\upshape
Assume that
$$\lim_{k \to 0} k^2 (s(k))_{11} \neq 0, \qquad \lim_{k \to 0} k^2 (s^A(k))_{11} \neq 0.$$
\end{assumption}

\subsubsection{Statement of the first theorem}
We can now state our first theorem, which concerns the direct problem, that is, the map from $\{u_0, v_0\}$ to $\{r_j\}_{j=1}^2$. 

\begin{theorem}[Properties of $r_1(k)$ and $r_2(k)$]\label{r1r2th}
Suppose $u_0,v_0 \in \mathcal{S}(\R)$ are such that Assumptions \ref{solitonlessassumption} and \ref{originassumption} hold.
Then the spectral functions $r_1:(0,\infty) \to \C$ and $r_2:(-\infty,0) \to \C$ are well-defined by (\ref{r1r2def}) and have the following properties:
\begin{enumerate}[$(i)$]
 \item $r_1 \in C^\infty((0,\infty))$ and $r_2 \in C^\infty((-\infty,0))$. 
 
 \item The functions $r_1(k)$, $r_2(k)$, and their derivatives $\partial_{k}^{j}r_{\ell}(k)$ have continuous boundary values at $k=0$ for $\ell = 1,2$ and for all $j = 0,1,2,\ldots$, and there exist expansions
 \begin{subequations}\label{r1r2atzero}
\begin{align}
& r_{1}(k) = r_{1}(0) + r_{1}'(0)k + \tfrac{1}{2}r_{1}''(0)k^{2} + \cdots, & & k \to 0, \ k >0, \\
& r_{2}(k) = r_{2}(0) + r_{2}'(0)k + \tfrac{1}{2}r_{2}''(0)k^{2} + \cdots, & & k \to 0, \ k <0,
\end{align}
\end{subequations}
which can be differentiated termwise any number of times.

\item The leading coefficients are given by
\begin{align}\label{r1r2at0}
r_{1}(0) = \omega, \qquad r_{2}(0) = 1.
\end{align}

\item $r_1(k)$ and $r_2(k)$ are rapidly decreasing as $|k| \to \infty$, i.e.,
 \begin{subequations}\label{r1r2rapiddecay}
\begin{align}
& \max_{j=0,1,\dots,N}\sup_{k \in (0,\infty)} (1+k)^N |\partial_k^jr_1(k)| < \infty,  
	\\
& \max_{j=0,1,\dots,N} \sup_{k \in (-\infty, 0)} (1+|k|)^N|\partial_k^jr_2(k)| < \infty,
\end{align}
\end{subequations}
for each integer $N \geq 0$. 

\item $|r_{1}(k)|<1$ for all $k > 0$ and $|r_{2}(k)| <1$ for all $k < 0$.

\end{enumerate} 
\end{theorem}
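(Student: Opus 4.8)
The plan is to obtain all four properties by combining the detailed analytic information about the entries $(s)_{11}, (s)_{12}$ and $(s^A)_{11}, (s^A)_{12}$ furnished by Propositions \ref{sprop} and \ref{sAprop} with the two nondegeneracy hypotheses (Assumptions \ref{solitonlessassumption} and \ref{originassumption}). Since $r_1 = (s)_{12}/(s)_{11}$ on $(0,\infty)$ and $r_2 = (s^A)_{12}/(s^A)_{11}$ on $(-\infty,0)$ are governed by structurally identical constructions (the ``$X,s$'' story on the positive axis and the ``$X^A,s^A$'' story on the negative axis), I would treat $r_1$ in detail and indicate that $r_2$ follows by the same argument applied to $s^A$.

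For item $(i)$, Proposition \ref{sprop} gives that $(s)_{11}$ and $(s)_{12}$ are smooth on $(0,\infty)$, while Assumption \ref{solitonlessassumption} guarantees $(s)_{11} \neq 0$ on $(0,\infty) \subset \bar{D}_1 \setminus \{0\}$. A quotient of smooth functions with nonvanishing denominator is smooth, so $r_1 \in C^\infty((0,\infty))$; likewise $r_2 \in C^\infty((-\infty,0))$. For item $(ii)$, the behavior at the origin is the delicate point. By Proposition \ref{sprop} each of $(s)_{11}, (s)_{12}$ has \emph{at most} a double pole at $k=0$, so $k^2 (s)_{11}$ and $k^2 (s)_{12}$ extend to functions that are smooth up to $k=0$ (one-sidedly along $(0,\infty)$). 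Writing
\begin{align*}
r_1(k) = \frac{k^2 (s(k))_{12}}{k^2 (s(k))_{11}},
\end{align*}
the denominator is nonzero at $k=0$ by Assumption \ref{originassumption}, so $r_1$ is itself smooth up to $k=0$. This yields the continuous boundary values of $r_1$ and of every derivative $\partial_k^j r_1$, together with the asymptotic (Taylor) expansion (\ref{r1r2atzero}) that may be differentiated termwise; the argument for $r_2$ is identical using $k^2 (s^A)_{11}$, $k^2(s^A)_{12}$ and the second inequality of Assumption \ref{originassumption}.

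For item $(iii)$ I would compute the leading (double-pole) coefficients explicitly. The source of the singularity at $k=0$ is the factorization $P(k) = \diag(1,k,k^2)\,W$ with $W$ the constant matrix obtained by setting $k=1$ in $P$; this forces an expansion $\mathsf{U}(x,k) = k^{-2}\mathsf{U}_{-2}(x) + k^{-1}\mathsf{U}_{-1}(x) + \cdots$ as $k\to 0$, whose leading coefficient is the rank-one matrix determined by the $(3,1)$ entry $-v_0 - u_{0x}$ of the potential. Feeding this expansion into the Volterra equation (\ref{XXAdef intro}) and then into (\ref{sdef intro}) produces the leading coefficients of $(s)_{11}$ and $(s)_{12}$; the relation (\ref{spm2p}) records that these leading coefficients are proportional with the universal ratio $\omega$, so that $r_1(0) = \lim_{k\to 0} (s)_{12}/(s)_{11} = \omega$ independently of the initial data. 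The same computation with (\ref{sApm2p}) gives proportionality constant $1$ for $s^A$, hence $r_2(0) = 1$.

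For item $(iv)$, the rapid decay follows from the large-$k$ behavior of the eigenfunctions. As $|k| \to \infty$ the Volterra solution satisfies $X(x,k) \to I$, whence $(s)_{11} \to 1$ and all its $k$-derivatives decay, so $1/(s)_{11}$ and its derivatives stay bounded for large $k$. The off-diagonal entry $(s)_{12}$, read off from (\ref{sdef intro}), carries the purely oscillatory factor $e^{x(l_2 - l_1)} = e^{-i\sqrt{3}\,kx}$ against integrands built from the Schwartz-class data $u_0,v_0$; repeated integration by parts in $x$ then shows that $(s)_{12}$ and each $\partial_k^j (s)_{12}$ decay faster than any power of $k$. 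Applying the Leibniz rule to $r_1 = (s)_{12}\cdot (s)_{11}^{-1}$ transfers this rapid decay to $r_1$ and its derivatives, which is (\ref{r1r2rapiddecay}); the estimate for $r_2$ is obtained in the same way from $s^A$. I expect the main obstacle to be items $(ii)$--$(iii)$: one must check that the cancellation of the double poles in the quotient is clean enough to leave a genuinely smooth boundary value with all derivatives, and that the surviving leading ratio is exactly the universal constant $\omega$ (resp. $1$), both of which rest on the precise Laurent structure at $k=0$ established in Propositions \ref{sprop} and \ref{sAprop}.
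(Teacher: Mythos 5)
Your proposal is correct and follows essentially the same route as the paper: all four properties are deduced from Propositions \ref{sprop} and \ref{sAprop}, with Assumption \ref{solitonlessassumption} giving the nonvanishing denominators for smoothness, statement $(e)$ giving the rapid decay, and Assumption \ref{originassumption} together with the Laurent structure \eqref{spm2p}, \eqref{sApm2p} of statement $(f)$ giving the smooth boundary values at $k=0$ and the values $r_1(0)=\omega^2/\omega=\omega$, $r_2(0)=\omega/\omega=1$. Your extra details (the cancellation $r_1 = k^2 s_{12}/(k^2 s_{11})$ and the explicit ratio computation) simply make explicit what the paper leaves implicit in citing statement $(f)$.
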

\begin{proof}
See Section \ref{r1r2subsec}.
\end{proof}

\subsection{The inverse problem}
We next consider the inverse problem of recovering $\{u_0, v_0\}$ from the scattering data. Since we are assuming that no solitons are present, the scattering data consists only of the two reflection coefficients $r_1(k)$ and $r_2(k)$. We will show that the inverse problem can be solved by means of a RH problem for a $3 \times 3$-matrix valued function $M$ whose jump matrix is expressed in terms of $r_1$ and $r_2$. As usual in the implementation of the inverse scattering transform, the time evolution of the scattering data is very simple. By including this simple time-dependence in the definition of the jump matrix, we obtain the solution $\{u(x,t), v(x,t)\}$ at any later time $t$ from the relations (\ref{recoveruv}). We first give the definition of the RH problem.

\subsubsection{The RH problem for $M$}
Let $\Gamma$ be the contour consisting of the three lines $\R \cup \omega \R \cup \omega^2 \R$ oriented away from the origin as in Figure \ref{Gamma.pdf}.
For $1 \leq i \neq j \leq 3$, define $\theta_{ij} \equiv \theta_{ij}(x,t,k)$ by
$$\theta_{ij}(x,t,k) = (l_i - l_j)x + (z_i - z_j)t.$$
Define the jump matrix $v(x,t,k)$ for $k \in \Gamma$ by
\begin{align}\nonumber
&  v_1 = 
  \begin{pmatrix}  
 1 & - r_1(k)e^{-\theta_{21}} & 0 \\
  r_1^*(k)e^{\theta_{21}} & 1 - |r_1(k)|^2 & 0 \\
  0 & 0 & 1
  \end{pmatrix},
	\\\nonumber
&  v_2 = 
  \begin{pmatrix}   
 1 & 0 & 0 \\
 0 & 1 - r_2(\omega k)r_2^*(\omega k) & -r_2^*(\omega k)e^{-\theta_{32}} \\
 0 & r_2(\omega k)e^{\theta_{32}} & 1 
    \end{pmatrix},
   	\\ \nonumber
  &v_3 = 
  \begin{pmatrix} 
 1 - r_1(\omega^2 k)r_1^*(\omega^2 k) & 0 & r_1^*(\omega^2 k)e^{-\theta_{31}} \\
 0 & 1 & 0 \\
 -r_1(\omega^2 k)e^{\theta_{31}} & 0 & 1  
  \end{pmatrix},
	\\\nonumber
&  v_4 = 
  \begin{pmatrix}  
  1 - |r_2(k)|^2 & -r_2^*(k) e^{-\theta_{21}} & 0 \\
  r_2(k)e^{\theta_{21}} & 1 & 0 \\
  0 & 0 & 1
   \end{pmatrix},
   	\\ \nonumber
&  v_5 = 
  \begin{pmatrix}
  1 & 0 & 0 \\
  0 & 1 & -r_1(\omega k)e^{-\theta_{32}} \\
  0 & r_1^*(\omega k)e^{\theta_{32}} & 1 - r_1(\omega k)r_1^*(\omega k) 
  \end{pmatrix},
	\\\label{vdef}
&  v_6 = 
  \begin{pmatrix} 
  1 & 0 & r_2(\omega^2 k)e^{-\theta_{31}} \\
  0 & 1 & 0 \\
  -r_2^*(\omega^2 k)e^{\theta_{31}} & 0 & 1 - r_2(\omega^2 k)r_2^*(\omega^2 k)
   \end{pmatrix},
\end{align}
where $v_j$ denotes the restriction of $v$  to the subcontour of $\Gamma$ labeled by $j$ in Figure \ref{Gamma.pdf}.
We consider the following RH problem.

\begin{RHproblem}[RH problem for $M$]\label{RH problem for M}
Find a $3 \times 3$-matrix valued function $M(x,t,k)$ with the following properties:
\begin{enumerate}[(a)]
\item $M(x,t,\cdot) : \mathbb{C}\setminus \Gamma \to \mathbb{C}^{3 \times 3}$ is analytic.

\item The limits of $M(x,t,k)$ as $k$ approaches $\Gamma\setminus \{0\}$ from the left and right exist, are continuous on $\Gamma\setminus \{0\}$, and are denoted by $M_+$ and $M_-$, respectively. Furthermore, they are related by
\begin{align}\label{Mjumpcondition}
& M_{+}(x,t,k) = M_{-}(x,t,k)v(x,t,k), \qquad k \in \Gamma.
\end{align}

\item As $k \to \infty$, $k \notin \Gamma$, we have
\begin{align*}
M(x,t,k) = I + \frac{M^{(1)}(x,t)}{k} + \frac{M^{(2)}(x,t)}{k^{2}} + O\bigg(\frac{1}{k^3}\bigg),
\end{align*}
where the matrices $M^{(1)}$ and $M^{(2)}$ depend on $x$ and $t$ but not on $k$, and satisfy
\begin{align}\label{singRHMatinftyb}
M_{12}^{(1)} = M_{13}^{(1)} = M_{12}^{(2)} + M_{13}^{(2)} = 0.
\end{align}

\item There exist matrices $\{\mathcal{M}_1^{(l)}(x,t)\}_{l=-2}^{+\infty}$ depending on $x$ and $t$ but not on $k$ such that, for any $N \geq -2$,
\begin{align}\label{singRHMat0}
M(x,t,k) = \sum_{l=-2}^{N} \mathcal{M}_1^{(l)}(x,t)k^{l} + O(k^{N+1}) \qquad \text{as}\ k \to 0, \ k \in \bar{D}_1.
\end{align}
Furthermore, there exist scalar coefficients $\alpha, \beta, \gamma, \delta, \epsilon$ depending on $x$ and $t$, but not on $k$, such that
\begin{align} \label{explicit Mcalpm2p}
\mathcal{M}_{1}^{(-2)}(x,t) = &\; \alpha(x,t) \begin{pmatrix}
\omega & 0 & 0 \\
\omega & 0 & 0 \\
\omega & 0 & 0
\end{pmatrix}, 
	\\
\mathcal{M}_{1}^{(-1)}(x,t)  = &\; \beta(x,t) \begin{pmatrix}
\omega^{2} & 0 & 0 \\
\omega^{2} & 0 & 0 \\
\omega^{2} & 0 & 0 
\end{pmatrix} + \gamma(x,t) \begin{pmatrix}
\omega^{2} & 0 & 0 \\
1 & 0 & 0 \\
\omega & 0 & 0
\end{pmatrix} \nonumber \\
& + \delta(x,t) \begin{pmatrix}
0 & 1-\omega & 0 \\
0 & 1-\omega & 0 \\
0 & 1-\omega & 0
\end{pmatrix}, \label{explicit Mcalpm1p}
\end{align}
and the third column of $\mathcal{M}_{1}^{(0)}(x,t)$ is given by
\begin{align}
[\mathcal{M}_{1}^{(0)}(x,t)]_{3} = \epsilon(x,t) \begin{pmatrix}
1 \\
1 \\
1 
\end{pmatrix}. \label{explicit Mcalp0p third column}
\end{align}

\item $M$ satisfies the symmetries
\begin{align}\label{singRHsymm}
M(x,t, k) = \mathcal{A} M(x,t,\omega k)\mathcal{A}^{-1} = \mathcal{B} \overline{M(x,t,\overline{k})}\mathcal{B}, \qquad k \in \C \setminus \Gamma,
\end{align}
where
\begin{align}\label{def of Acal and Bcal}
\mathcal{A} := \begin{pmatrix}
0 & 0 & 1 \\
1 & 0 & 0 \\
0 & 1 & 0
\end{pmatrix} \qquad \mbox{ and } \qquad \mathcal{B} := \begin{pmatrix}
0 & 1 & 0 \\
1 & 0 & 0 \\
0 & 0 & 1
\end{pmatrix}.
\end{align}
\end{enumerate}
\end{RHproblem}

The functions $\alpha, \beta, \gamma, \delta, \epsilon$ are not prescribed in the formulation of the RH problem \ref{RH problem for M}. Nevertheless, as we show in Appendix \ref{appA}, the solution $M$ is still uniquely determined thanks to the additional assumptions (\ref{singRHMatinftyb}) on the behavior at $k = \infty$.

\subsubsection{Statement of the second theorem}
Our second theorem states that the solution $\{u(x,t), v(x,t)\}$ of the Boussinesq equation \eqref{boussinesqsystem} can be recovered from the solution $M(x,t,k)$ of the RH problem \ref{RH problem for M} via the relations (\ref{recoveruv}).
Although it is possible to carry out all the arguments under more restricted regularity and decay assumptions, we will only deal with Schwartz class solutions for simplicity. 

\begin{definition}\upshape
We call $\{u(x,t), v(x,t)\}$ a {\it Schwartz class solution of \eqref{boussinesqsystem} with existence time $T \in (0, \infty]$ and initial data $u_0, v_0 \in \mathcal{S}(\R)$} if
\begin{enumerate}[$(i)$] 
  \item $u,v$ are smooth real-valued functions of $(x,t) \in \R \times [0,T)$.

\item $u,v$ satisfy \eqref{boussinesqsystem} for $(x,t) \in \R \times [0,T)$ and 
$$u(x,0) = u_0(x), \quad v(x,0) = v_0(x), \qquad x \in \R.$$ 

  \item $u,v$ have rapid decay as $|x| \to \infty$ in the sense that, for each integer $N \geq 1$,
$$\sup_{\substack{x \in \R \\ t \in [0, T)}} \sum_{i =0}^N (1+|x|)^N(|\partial_x^i u| + |\partial_x^i v| ) < \infty.$$
\end{enumerate} 
\end{definition}

The second theorem can now be stated.

\begin{theorem}[Solution of (\ref{boussinesqsystem}) via inverse scattering]\label{RHth}
Suppose $\{u(x,t), v(x,t)\}$ is a Schwartz class solution of (\ref{boussinesqsystem}) with existence time $T \in (0, \infty]$ and initial data $u_0, v_0 \in \mathcal{S}(\R)$ such that Assumptions \ref{solitonlessassumption} and \ref{originassumption} hold. Define the spectral functions $r_j(k)$, $j = 1,2$, in terms of $u_0, v_0$ by (\ref{r1r2def}).
Then the RH problem \ref{RH problem for M} has a unique solution $M(x,t,k)$ for each $(x,t) \in \R \times [0,T)$ and the formulas (\ref{recoveruv}) expressing $\{u(x,t), v(x,t)\}$ in terms of $M$ are valid for all $(x,t) \in \R \times [0,T)$.
\end{theorem}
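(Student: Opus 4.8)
The plan is to build the solution $M$ explicitly out of simultaneous eigenfunctions of the Lax pair and then verify that this $M$ has all of the properties (a)--(e); uniqueness is handled separately. First I would promote the operators $L$ and $A$ of (\ref{LAdef}) to the $x$- and $t$-parts of a matrix Lax pair diagonalized by $\{l_j, z_j\}$ as in (\ref{lmexpressions intro}) and conjugated by $P(k)$ as in (\ref{mathsfUdef intro}), so that the eigenfunctions are normalized to $I$ at $\pm\infty$ and carry the phases $e^{l_j x + z_j t}$. Solving the Volterra equations (\ref{XXAdef intro}), together with the analogous equations for the $-\infty$-normalized $Y, Y^A$, produces $X, Y, X^A, Y^A$ and the scattering matrices $s, s^A$ of (\ref{sdef intro})--(\ref{sAdef intro}). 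Because the spectral problem is third order, no single column of $X$ or $Y$ is analytic throughout $\C$; following \cite{L3x3}, in each sector $D_n$ of Figure \ref{Gamma.pdf} I would select the unique combination of columns of $X, Y$ and cofactor (second exterior power) columns built from $X^A, Y^A$ that is bounded and analytic there, and glue these six sectorial matrices into a single $M$. The time dependence is explicit: since $\{u,v\}$ solves (\ref{boussinesqsystem}), the same eigenfunctions satisfy the $t$-part, so the scattering data evolves only through the factors $e^{z_j t}$, which is exactly what produces the phases $\theta_{ij}$ in the jump matrix.

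Second, I would verify properties (a)--(c) and (e). Property (a) is immediate from the sectorial construction; the jump (b) follows by expressing the ratio of the sectorial matrices across each ray of $\Gamma$ in terms of the entries of $s, s^A$ and invoking the definitions (\ref{r1r2def}) of $r_1, r_2$, which must reproduce (\ref{vdef}) ray by ray. The large-$k$ asymptotics in (c), including the constraints (\ref{singRHMatinftyb}), come from the standard asymptotic expansion of the Volterra solutions as $k \to \infty$; reading off the $1/k$ coefficient of the $(3,3)$ entry and differentiating in $x$ and $t$ yields the reconstruction formulas (\ref{recoveruv}), the $t$-derivative using the $A$-part of the Lax pair. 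The symmetries (\ref{singRHsymm}) are inherited from the invariance of $\mathsf{U}$ under $k \mapsto \omega k$, implemented by $\mathcal{A}$, and from the reality of $u_0, v_0$ under $k \mapsto \bar{k}$, implemented by $\mathcal{B}$; both hold already at the level of the integral equations and hence pass to $M$.

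The main obstacle is property (d), the singular behavior at $k = 0$. The conjugating matrix $P(k)$ in (\ref{Pdef intro}) degenerates as $k \to 0$, so $\mathsf{U}(x,k) = P(k)^{-1}(\cdots)P(k)$, and hence the eigenfunctions themselves, are singular there. I would first establish, as asserted in Propositions \ref{sprop} and \ref{sAprop}, that each relevant entry of $s$ and $s^A$ has at most a double pole at $k = 0$, and then, under Assumption \ref{originassumption}, propagate this through the sectorial construction to obtain the expansion (\ref{singRHMat0}) with the explicit rank-one coefficient matrices (\ref{explicit Mcalpm2p})--(\ref{explicit Mcalp0p third column}). The precise $\omega$-patterned structure of these coefficients should fall out of the column structure of $P(k)$ near $k=0$ combined with the symmetry (\ref{singRHsymm}); tracking the residues carefully so that the leading $k^{-2}$ and $k^{-1}$ terms take exactly the prescribed forms is the most delicate computation, consistent with the paper's own remark that the singularity at the origin is the principal difficulty.

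Finally, for uniqueness I would argue in Appendix \ref{appA} that if $M_1, M_2$ are two solutions then $N := M_1 M_2^{-1}$ has no jump across $\Gamma$, tends to $I$ at infinity, and, by the symmetry and the prescribed rank-one structure at $k=0$, has only a removable singularity at the origin, so that $N$ extends to an entire bounded function equal to $I$ by Liouville's theorem. The delicate point is that the coefficients $\alpha, \beta, \gamma, \delta, \epsilon$ are \emph{not} prescribed, so a priori there is too much freedom at $k = 0$; here the extra conditions (\ref{singRHMatinftyb}) on the behavior at $k = \infty$ are exactly what is needed to cancel this freedom and force $N = I$, which establishes uniqueness and thereby completes the proof.
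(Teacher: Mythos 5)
Your overall strategy is sound and, in its essentials, parallel to the paper's: the paper also builds $M$ sector by sector, recovers $u,v$ from the $1/k$ coefficient of $M_{33}$, and proves uniqueness in an appendix by exploiting both the structure at $k=0$ and the conditions (\ref{singRHMatinftyb}) at $k=\infty$. The main structural difference is that the paper does \emph{not} define $M$ by gluing columns of $X,Y$ and cofactor columns of $X^A,Y^A$; it defines each $M_n$ as the solution of a Fredholm integral equation (\ref{Mndef}) of Beals--Coifman/Caudrey type, and only afterwards derives the column formulas (Lemma \ref{M1XYlemma}). This is not a cosmetic choice, and it is where your plan has a genuine gap: your verification of the jump condition relies on "expressing the ratio of the sectorial matrices across each ray of $\Gamma$ in terms of the entries of $s,s^A$," i.e.\ on scattering relations such as $X = Y e^{x\widehat{\mathcal{L}(k)}}s(k)$. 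For Schwartz data that are \emph{not} compactly supported, this relation does not hold entrywise, because many entries of $X$, $Y$ and $s$ are simply undefined on the relevant rays (e.g.\ $s_{31}$ is only defined on $\omega\R_+$, so the first column of the relation cannot even be written down on $\R_+$). The paper closes exactly this hole by a density argument: everything is first proved for compactly supported data, where (\ref{XYs}) and the factorization $s = S_nT_n^{-1}$ make sense globally (Lemmas \ref{Snexplicitlemma} and \ref{Mjumplemma}), and then extended by the approximation Lemma \ref{sequencelemma}. Without this step (or a substitute for it), your jump computation does not go through. The same remark applies to your treatment of the time dependence: the statement that "the same eigenfunctions satisfy the $t$-part" is exactly Lemma \ref{Mlaxlemma}, whose proof (vanishing of the homogeneous Fredholm solution $\chi_n$) is a real argument, not a formality.

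There is also an inaccuracy in your uniqueness sketch. You claim that, by the symmetry and the rank-one structure at $k=0$, the ratio $N = M_1M_2^{-1}$ of two solutions has a \emph{removable} singularity at the origin and is then constant by Liouville. This is not what happens: the prescribed structure at $k=0$ only yields $M_1M_2^{-1} = O(k^{-2})$, so a priori one has $M_1M_2^{-1} = I + P/k + Q/k^2$ as in (\ref{MNinvBC}). The pole is killed only afterwards, by reconciling two \emph{different} structural forms of $Q$ — the rank-one form (\ref{QQ33pmatrix}) coming from the $k\to 0$ expansion and the form (\ref{QMexpression}) coming from the $k\to\infty$ conditions (\ref{singRHMatinftyb}) — which forces $Q=0$, and then using $\det(M_1M_2^{-1})\equiv 1$ (which itself requires Lemma \ref{unitdetlemma} to justify inverting $M_2$ and to control the determinant) to force $P=0$. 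You do correctly sense that the conditions at infinity are what compensate for the unprescribed coefficients $\alpha,\beta,\gamma,\delta,\epsilon$, but the mechanism is a cancellation between the two Laurent-coefficient structures, not removability followed by Liouville.
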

\begin{proof}
See Section \ref{RHsec}.
\end{proof}

\subsection{Corollaries}
We have stated Theorem \ref{r1r2th} and Theorem \ref{RHth} for the system (\ref{boussinesqsystem}). We can easily obtain analogous results for equation (\ref{boussinesq}) as corollaries provided that $u_1$ has zero mean. We begin by making the notion of solution of (\ref{boussinesq}) precise. 

\begin{definition}\upshape
We call $u(x,t)$ a {\it Schwartz class solution of \eqref{boussinesq} with existence time $T \in (0, \infty]$ and initial data $u_0, u_1 \in \mathcal{S}(\R)$} if
\begin{enumerate}[$(i)$] 
  \item $u$ is a smooth real-valued function of $(x,t) \in \R \times [0,T)$.

\item $u$ satisfies \eqref{boussinesq} for $(x,t) \in \R \times [0,T)$ and 
$$u(x,0) = u_0(x), \quad u_t(x,0) = u_1(x), \qquad x \in \R.$$ 

  \item $u$ has rapid decay as $|x| \to \infty$ in the sense that, for each integer $N \geq 1$,
\begin{align}\label{rapiddecay}
\sup_{\substack{x \in \R \\ t \in [0, T)}} \sum_{i =0}^N (1+|x|)^N|\partial_x^i u| < \infty.
\end{align}

\end{enumerate} 
\end{definition}

The next lemma shows that equation (\ref{boussinesq}) is equivalent to the system (\ref{boussinesqsystem}) provided that $u_1$ has zero mean.

\begin{lemma}\label{systemlemma}
Let $T \in (0, \infty]$. If $\{u,v\}$ is a Schwartz class solution of \eqref{boussinesqsystem} with existence time $T$ and initial data $u_0, v_0 \in \mathcal{S}(\R)$, then $u$ is Schwartz class solution of \eqref{boussinesq} with existence time $T$ and initial data $u_0, u_1 \in \mathcal{S}(\R)$, where $u_1(x) = u_t(x,0)$; moreover, $u_1$ satisfies (\ref{u1zeromean}).

Conversely, suppose $u$ is Schwartz class solution of \eqref{boussinesq} with existence time $T$ and initial data $u_0, u_1 \in \mathcal{S}(\R)$ such that (\ref{u1zeromean}) holds. Let 
\begin{align}\label{vxtdef}
v(x,t) = \int_{-\infty}^x u_t(x', t) dx', \qquad v_0(x) = \int_{-\infty}^x u_1(x') dx'.
\end{align}
Then $\{u,v\}$ is a Schwartz class solution of \eqref{boussinesqsystem} with existence time $T$ and initial data $u_0, v_0 \in \mathcal{S}(\R)$.
\end{lemma}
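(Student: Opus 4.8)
The plan is to prove the two implications separately, in each case by a direct differentiation/integration argument; the only genuinely nontrivial point is controlling the behaviour of $v$ as $x \to +\infty$ in the converse direction, where the zero-mean hypothesis (\ref{u1zeromean}) enters decisively.

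For the forward direction (system $\Rightarrow$ equation), suppose $\{u,v\}$ solves (\ref{boussinesqsystem}). Differentiating the first equation of (\ref{boussinesqsystem}) with respect to $x$ and using $v_{tx} = (v_x)_t = (u_t)_t = u_{tt}$ from the second equation yields
\[
u_{tt} + \tfrac13 u_{xxxx} + \tfrac43 (u^2)_{xx} = 0,
\]
which is precisely (\ref{boussinesq}). Smoothness of $u$ and the decay estimate (\ref{rapiddecay}) are inherited directly from the corresponding properties of the system solution, since the decay hypothesis for $\{u,v\}$ already controls all $\partial_x^i u$. Setting $u_1(x) := u_t(x,0) = v_x(x,0) = \partial_x v_0(x)$, we have $u_1 \in \mathcal{S}(\R)$ because $v_0 \in \mathcal{S}(\R)$, and $\int_\R u_1\,dx = \int_\R \partial_x v_0\,dx = 0$ since $v_0$ decays at $\pm\infty$; this establishes (\ref{u1zeromean}).

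For the converse (equation $\Rightarrow$ system), define $v$ by (\ref{vxtdef}). The second equation $u_t = v_x$ is immediate from the fundamental theorem of calculus, and $u(\cdot,0)=u_0$, $v(\cdot,0)=v_0$ hold by construction. To obtain the first equation I would differentiate under the integral sign and substitute $u_{tt}$ from (\ref{boussinesq}):
\[
v_t = \int_{-\infty}^x u_{tt}(x',t)\,dx' = -\int_{-\infty}^x \Big[\tfrac13 u_{x'x'x'x'} + \tfrac43 (u^2)_{x'x'}\Big]\,dx' = -\tfrac13 u_{xxx} - \tfrac43 (u^2)_x,
\]
where the integrand is a perfect $x'$-derivative and the boundary terms at $x' = -\infty$ vanish by the decay of $u$ and its $x$-derivatives; this is exactly the first equation of (\ref{boussinesqsystem}).

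It remains to verify that $v_0$ and $v$ are Schwartz class with the uniformity in $t$ demanded by the definition of a Schwartz class solution of the system, and this is the main obstacle. The $x$-derivatives of $v$ of order $\geq 1$ coincide with $x$-derivatives of $u_t$, which decay rapidly via $u_t = u_1 + \int_0^t u_{tt}\,ds$ together with the representation of $u_{tt}$ by $x$-derivatives of $u$ afforded by (\ref{boussinesq}); the decay of $v$ itself as $x \to -\infty$ is immediate from (\ref{vxtdef}). The delicate point is the decay as $x \to +\infty$, which requires $\int_\R u_t(x',t)\,dx' = 0$ for every $t$, so that $v(x,t) = -\int_x^\infty u_t(x',t)\,dx'$. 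I would deduce this from the conservation law (\ref{intuintu1}): integrating (\ref{boussinesq}) over $\R$ and discarding the total-derivative terms gives $\frac{d^2}{dt^2}\int_\R u\,dx = 0$, whence $\frac{d}{dt}\int_\R u\,dx \equiv \int_\R u_1\,dx = 0$ by (\ref{u1zeromean}); thus $\int_\R u_t\,dx = 0$ for all $t$ and $v$ decays rapidly at $+\infty$, and the same identity shows $v_0(x) = -\int_x^\infty u_1\,dx'$ decays rapidly, so $v_0 \in \mathcal{S}(\R)$. The remaining care concerns uniformity in $t$ on $[0,T)$: the pointwise-in-$x$ weighted bounds obtained above combine with the uniform-in-$t$ control of the $x$-derivatives of $u$ from (\ref{rapiddecay}), the time integrals being taken over the bounded interval $[0,t]$, which yields the required Schwartz estimates for $v$.
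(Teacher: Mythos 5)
Your proof is correct and follows essentially the same route as the paper's: for the forward direction, differentiate the first system equation and use $v_{tx}=u_{tt}$; for the converse, define $v$ by (\ref{vxtdef}), combine (\ref{intuintu1}) with (\ref{u1zeromean}) to conclude $\int_\R u_t(x',t)\,dx'=0$ for all $t$ (hence the decay of $v$ at $+\infty$), and integrate (\ref{boussinesq}) from $-\infty$ to $x$ to recover the first system equation. The one point you treat briefly --- uniformity in $t$ of the decay estimates obtained by writing $u_t$ as a time integral of $u_{tt}$ --- is handled with the same brevity in the paper (``straightforward estimates''), so your write-up matches the paper's argument and level of detail.
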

\begin{proof}
Suppose $\{u,v\}$ is a Schwartz class solution of \eqref{boussinesqsystem} with existence time $T$ and initial data $u_0, v_0 \in \mathcal{S}(\R)$.
Differentiating the first equation in (\ref{boussinesqsystem}) with respect to $x$ and using the second equation in (\ref{boussinesqsystem}) to replace $v_{tx}$ with $u_{tt}$, we find that $u$ satisfies (\ref{boussinesq}) with initial data $u_0(x) = u(x,0)$ and $u_1(x) = u_t(x,0) = v_{0x}(x)$.

For the converse, suppose $u$ is Schwartz class solution of \eqref{boussinesq} with existence time $T$ and initial data $u_0, u_1 \in \mathcal{S}(\R)$. We see from (\ref{boussinesq}) that (\ref{rapiddecay}) holds with $u$ replaced by $u_{tt}$. Then, writing $u_t(x,t) = \int_{t_0}^t u_{tt}(x,t') dt'$, we infer from straightforward estimates that (\ref{rapiddecay}) holds also with $u$ replaced by $u_{t}$.
Define $v$ by (\ref{vxtdef}). It follows from (\ref{u1zeromean}) and (\ref{intuintu1}) that $\int_\R u dx$ is conserved in time. Thus $\int_\R u_t(x', t) dx' = 0$ and hence (\ref{rapiddecay}) holds also with $u$ replaced by $v$. Integration of (\ref{boussinesq}) from $-\infty$ to $x$ yields
$$\int_{-\infty}^x u_{tt}(x', t) dx' + \frac{1}{3} u_{xxx} + \frac{4}{3} (u^2)_{x}  = 0.$$
Since
$$\int_{-\infty}^x u_{tt}(x', t) dx' = v_t(x,t),$$
we conclude that $u,v$ satisfy (\ref{boussinesqsystem}).
\end{proof}

In view of Lemma \ref{systemlemma}, we immediately obtain the following corollary of Theorems \ref{r1r2th} and \ref{RHth}.

\begin{corollary}[Solution of (\ref{boussinesq}) via inverse scattering]\label{RHcor}
Suppose $u(x,t)$ is a Schwartz class solution of (\ref{boussinesq}) with existence time $T \in (0, \infty]$ and initial data $u_0, u_1 \in \mathcal{S}(\R)$ such that $\int_\R u_1 dx = 0$. Define $v_0 \in \mathcal{S}(\R)$ by (\ref{vxtdef}) and define $r_1:(0,\infty) \to \C$ and $r_2:(-\infty,0) \to \C$ by (\ref{r1r2def}). Suppose Assumptions \ref{solitonlessassumption} and \ref{originassumption} hold. 
Then $r_1$ and $r_2$ have all the properties listed in Theorem \ref{r1r2th}. Moroever, the RH problem \ref{RH problem for M} has a unique solution $M(x,t,k)$ for each $(x,t) \in \R \times [0,T)$ and the formula in (\ref{recoveruv}) expressing $u$ in terms of $M$ is valid for all $(x,t) \in \R \times [0,T)$.
\end{corollary}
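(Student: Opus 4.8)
The plan is to deduce the corollary directly from Lemma \ref{systemlemma} together with Theorems \ref{r1r2th} and \ref{RHth}, since the corollary is essentially a translation of those two theorems from the system (\ref{boussinesqsystem}) back to the scalar equation (\ref{boussinesq}). No new analysis should be required beyond the equivalence already established in the lemma.

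First I would invoke the converse direction of Lemma \ref{systemlemma}. By hypothesis $u$ is a Schwartz class solution of (\ref{boussinesq}) with initial data $u_0, u_1 \in \mathcal{S}(\R)$ satisfying $\int_\R u_1\, dx = 0$. The lemma then guarantees that, with $v$ and $v_0$ defined by (\ref{vxtdef}), the pair $\{u,v\}$ is a Schwartz class solution of the system (\ref{boussinesqsystem}) with initial data $u_0, v_0 \in \mathcal{S}(\R)$. The role of the zero-mean condition is precisely to ensure that $v_0(x) = \int_{-\infty}^x u_1(x')\,dx'$ decays as $x \to +\infty$, so that $v_0$ indeed lies in $\mathcal{S}(\R)$; this is the one and only place where the hypothesis $\int_\R u_1\,dx = 0$ is used.

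Next I would observe that the reflection coefficients $r_1, r_2$ in (\ref{r1r2def}), as well as the scattering matrices $s, s^A$ entering Assumptions \ref{solitonlessassumption} and \ref{originassumption}, are functionals of the initial data $\{u_0, v_0\}$ alone (through the matrix $\mathsf{U}$ in (\ref{mathsfUdef intro}) and the Volterra equations (\ref{XXAdef intro})). Hence the standing hypotheses of the corollary, namely Assumptions \ref{solitonlessassumption} and \ref{originassumption}, are literally the hypotheses of Theorems \ref{r1r2th} and \ref{RHth} for this pair $\{u_0, v_0\}$. Theorem \ref{r1r2th} therefore yields at once all of the properties $(i)$--$(iv)$ claimed for $r_1$ and $r_2$.

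Finally I would apply Theorem \ref{RHth} to the Schwartz class solution $\{u,v\}$ of (\ref{boussinesqsystem}) produced in the first step. This gives the existence and uniqueness of the solution $M(x,t,k)$ of RH problem \ref{RH problem for M} for each $(x,t) \in \R \times [0,T)$, together with the validity of both relations in (\ref{recoveruv}); retaining only the first of these gives the asserted reconstruction of $u$ from $M$. I do not anticipate a genuine obstacle here, as the entire content of the corollary is carried by the equivalence in Lemma \ref{systemlemma}, and the only point requiring care, the verification that $v_0 \in \mathcal{S}(\R)$, has already been dispatched by that lemma through the zero-mean assumption.
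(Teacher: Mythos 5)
Your proposal is correct and takes essentially the same approach as the paper, which presents Corollary \ref{RHcor} as an immediate consequence of the converse direction of Lemma \ref{systemlemma} combined with Theorems \ref{r1r2th} and \ref{RHth}. One minor inaccuracy in an aside: within Lemma \ref{systemlemma} the hypothesis $\int_\R u_1\,dx = 0$ is used not only to ensure $v_0 \in \mathcal{S}(\R)$ but also (via the conservation of $\int_\R u\,dx$) to ensure that $v(\cdot,t)$ decays as $x \to +\infty$ for all later times $t$; since you invoke the lemma as a black box, this does not affect the validity of your argument.
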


Since the transformation (\ref{fromboussinesqtogoodboussinesq}) takes solutions of (\ref{boussinesq}) to solutions of (\ref{goodboussinesq}), we can also express our results in terms of solutions of (\ref{goodboussinesq}).

\begin{definition}\upshape
We call $\tilde{u}(x,t)$ a {\it background density 1/2 solution of \eqref{goodboussinesq} with existence time $T \in (0, \infty]$ and initial data $\tilde{u}_0 \in \frac{1}{2}+\mathcal{S}(\mathbb{R}),\tilde{u}_1 \in \mathcal{S}(\R)$} if
\begin{enumerate}[$(i)$] 
  \item $\tilde{u}$ is a smooth real-valued function of $(x,t) \in \R \times [0,T)$.

\item $\tilde{u}$ satisfies \eqref{goodboussinesq} for $(x,t) \in \R \times [0,T)$ and 
$$\tilde{u}(x,0) = \tilde{u}_0(x), \quad \tilde{u}_t(x,0) = \tilde{u}_1(x), \qquad x \in \R.$$ 

  \item $\tilde{u}$ has rapid decay to $1/2$ as $|x| \to \infty$ in the sense that, for each integer $N \geq 1$,
\begin{align*}
\sup_{\substack{x \in \R \\ t \in [0, T)}} \sum_{i =0}^N (1+|x|)^N\big|\partial_x^i \big(\tilde{u} - \tfrac{1}{2}\big)\big| < \infty.
\end{align*}
\end{enumerate} 
\end{definition}

In view of (\ref{fromboussinesqtogoodboussinesq}), Corollary \ref{RHcor} can be reformulated as follows.

\begin{corollary}[Solution of (\ref{goodboussinesq}) via inverse scattering]\label{RHcor2}
Suppose $\tilde{u}(x,t)$ is a background density 1/2 solution of (\ref{goodboussinesq}) with existence time $T \in (0, \infty]$ and initial data $\tilde{u}_0 \in \frac{1}{2}+\mathcal{S}(\R), \tilde{u}_1 \in \mathcal{S}(\R)$ such that $\int_\R \tilde{u}_1(x)dx = 0$. Define $u_0, v_0 \in \mathcal{S}(\R)$ by 
$$u_0(x) = \frac{\sqrt{3}}{4} \bigg(\tilde{u}_0(3^{1/4}x) - \frac{1}{2}\bigg), \qquad v_0(x) = \frac{\sqrt{3}}{4} \int_{-\infty}^x  \tilde{u}_1(3^{1/4}x') dx',$$
and define $r_1:(0,\infty) \to \C$ and $r_2:(-\infty,0) \to \C$ in terms of $u_0, v_0$ by (\ref{r1r2def}). Suppose Assumptions \ref{solitonlessassumption} and \ref{originassumption} hold. 
Then $r_1$ and $r_2$ have all the properties listed in Theorem \ref{r1r2th}. Moroever, the RH problem \ref{RH problem for M} has a unique solution $M(x,t,k)$ for each $(x,t) \in \R \times [0,T)$ and the following formula for $\tilde{u}$ in terms of $M$ is valid for $(x,t) \in \R \times [0,T)$:
$$\tilde{u}(x,t) = \frac{1}{2} - 2 \cdot 3^{3/4} \frac{\partial}{\partial x}\lim_{k\to \infty}k\bigg(M_{33}\bigg(\frac{x}{3^{1/4}},t,k\bigg) - 1\bigg).$$
\end{corollary}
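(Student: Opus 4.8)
The plan is to reduce Corollary \ref{RHcor2} to Corollary \ref{RHcor} by inverting the affine change of variables (\ref{fromboussinesqtogoodboussinesq}). First I would set
$$u(x,t) := \frac{\sqrt{3}}{4}\Big(\hat{u}\big(3^{1/4}x,\, t\big) - \tfrac{1}{2}\Big),$$
which is the inverse of (\ref{fromboussinesqtogoodboussinesq}), and note that this relation reproduces exactly the formulas for $u_0$ and $v_0$ stated in the corollary: indeed $u_0(x) = u(x,0) = \frac{\sqrt{3}}{4}(\hat{u}_0(3^{1/4}x) - \tfrac12)$, while $u_1(x) = u_t(x,0) = \frac{\sqrt{3}}{4}\hat{u}_1(3^{1/4}x)$, so that $v_0(x) = \int_{-\infty}^x u_1(x')\,dx' = \frac{\sqrt{3}}{4}\int_{-\infty}^x \hat{u}_1(3^{1/4}x')\,dx'$ as in (\ref{vxtdef}).

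Next I would verify that $u$ is a Schwartz class solution of (\ref{boussinesq}). Since the affine scaling $\hat{u} \mapsto u$ is invertible and (\ref{fromboussinesqtogoodboussinesq}) is already known to carry solutions of (\ref{boussinesq}) to solutions of (\ref{goodboussinesq}), the inverse map carries the background-density-$1/2$ solution $\hat{u}$ back to a genuine solution $u$ of (\ref{boussinesq}); this can also be checked directly by a short chain-rule computation in which the $-\hat{u}_{xx}$ term of (\ref{goodboussinesq}) is precisely cancelled by the linear-in-$u$ part of $(\hat{u}^2)_{xx}$ generated by the additive constant $\tfrac12$, and the surviving terms assemble into $\frac{4}{\sqrt{3}}$ times the left-hand side of (\ref{boussinesq}). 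Smoothness of $u$ is immediate, and since $\hat{u} - \tfrac12$ satisfies the rapid-decay bound, so does $u$ after the harmless rescaling $x \mapsto 3^{1/4}x$; in particular $u_0, u_1 \in \mathcal{S}(\R)$.

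I would then check the zero-mean hypothesis required to invoke Corollary \ref{RHcor}: a change of variables gives $\int_\R u_1\,dx = \frac{\sqrt{3}}{4}\,3^{-1/4}\int_\R \hat{u}_1\,dy = 0$ by assumption, so $v_0$ defined by (\ref{vxtdef}) lies in $\mathcal{S}(\R)$ and coincides with the corollary's formula. At this point Corollary \ref{RHcor} applies verbatim to $\{u,v\}$: the functions $r_1, r_2$ defined by (\ref{r1r2def}) enjoy all the properties of Theorem \ref{r1r2th}, the RH problem \ref{RH problem for M} has a unique solution $M(x,t,k)$, and $u(x,t) = -\tfrac{3}{2}\,\partial_x \lim_{k\to\infty} k\big(M_{33}(x,t,k)-1\big)$.

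Finally I would translate the recovery formula back to $\hat{u}$. Solving the defining relation gives $\hat{u}(x,t) = \tfrac12 + \tfrac{4}{\sqrt{3}}\,u\big(x/3^{1/4},\, t\big)$, so I substitute the recovery formula for $u$ evaluated at the rescaled argument $x/3^{1/4}$; rewriting the derivative with respect to the argument of $u$ as a derivative with respect to $x$ of $M_{33}(x/3^{1/4},t,k)$ introduces a chain-rule factor $3^{1/4}$, and combined with the prefactor $\tfrac{4}{\sqrt{3}}\cdot\big(-\tfrac32\big) = -2\sqrt{3}$ this produces the coefficient $-2\sqrt{3}\cdot 3^{1/4} = -2\cdot 3^{3/4}$, which is precisely the constant in the stated formula for $\hat{u}$. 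There is no genuine obstacle here, since the corollary is a formal consequence of Corollary \ref{RHcor} through an invertible change of variables; the only care required is in bookkeeping the scaling constants and the chain rule so that the numerical factor $-2\cdot 3^{3/4}$ comes out correctly.
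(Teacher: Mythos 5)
Your proposal is correct and follows exactly the paper's route: the paper derives Corollary \ref{RHcor2} from Corollary \ref{RHcor} by the invertible scaling (\ref{fromboussinesqtogoodboussinesq}), which is precisely your reduction, and your constant bookkeeping ($\frac{4}{\sqrt{3}}\cdot(-\frac{3}{2})\cdot 3^{1/4} = -2\cdot 3^{3/4}$) is accurate. You have merely written out in detail what the paper leaves as an immediate reformulation.
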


Our next and last corollary is useful for the evaluation of long-time asymptotics of the solution $u(x,t)$ via steepest descent arguments.
The RH problem \ref{RH problem for M} is singular at the origin in the sense that the solution $M$ is allowed to have a double pole at $k = 0$. 
It is therefore not very convenient to perform a steepest descent analysis of this RH problem. We can obtain a RH problem which is regular at the origin by introducing the row-vector-valued function $n$ by
\begin{align}\label{ndef}
n(x,t,k) = \begin{pmatrix}\omega & \omega^2 & 1 \end{pmatrix} M(x,t,k).
\end{align}
Indeed, since the coefficients of $k^{-2}$ and $k^{-1}$ in the expansion of $M$ at $k = 0$ are such that they vanish when premultiplied by the row-vector $(\omega,\omega^{2},1)$ (see (\ref{explicit Mcalpm2p}) and (\ref{explicit Mcalpm1p})), the function $n$ satisfies the following vector RH problem.

\begin{RHproblem}[RH problem for $n$]\label{RHn}
Find a $1 \times 3$-row-vector valued function $n(x,t,k)$ with the following properties:
\begin{enumerate}[(a)]
\item $n(x,t,\cdot) : \C \setminus \Gamma \to \mathbb{C}^{1 \times 3}$ is analytic.

\item The limits of $n(x,t,k)$ as $k$ approaches $\Gamma \setminus \{0\}$ from the left and right exist, are continuous on $\Gamma \setminus \{0\}$, and are denoted by $n_+$ and $n_-$, respectively. Furthermore, they are related by
\begin{align}\label{njump}
  n_+(x,t,k) = n_-(x, t, k) v(x, t, k), \qquad k \in \Gamma \setminus \{0\}.
\end{align}

\item $n(x,t,k) = (\omega,\omega^{2},1) + O(k^{-1})$ as $k \to \infty$.

\item $n(x,t,k) = O(1)$ as $k \to 0$.
\end{enumerate}
\end{RHproblem}
The RH problem for $n$ is regular at the origin and is clearly simpler than the RH problem for $M$. Moreover, the solution $\{u, v\}$ of (\ref{boussinesqsystem}) can be recovered from $n$ via the relations
\begin{align}\label{recoveruvn}
\begin{cases}
u(x,t) = -\frac{3}{2}\frac{\partial}{\partial x}\lim_{k\to \infty}k(n_{3}(x,t,k) - 1),
	\\
v(x,t) = -\frac{3}{2}\frac{\partial}{\partial t}\lim_{k\to \infty}k(n_{3}(x,t,k) - 1).
\end{cases}
\end{align}
However, we have not been able to establish uniqueness of the solution of the RH problem for $n$ except in special cases (this is the reason why we have chosen to formulate Theorem \ref{RHth} in terms of $M$ rather than $n$). When performing a steepest descent analysis, the existence of a unique solution follows from the analysis of a small-norm RH problem. The following corollary, which follows easily from Theorem \ref{RHth}, is therefore useful for the evaluation of long-time asymptotics, see \cite{CLWasymptotics}. 

\begin{corollary}[Solution of (\ref{boussinesqsystem}) in terms of $n$]\label{ncor}
Suppose the assumptions of Theorem \ref{RHth} hold. Let $U$ be an open subset of $\R \times [0,\infty)$ and suppose that the solution of the RH problem \ref{RHn} for $n$ is unique for each $(x,t) \in U$ whenever it exists. Then the RH problem \ref{RHn} has a unique solution $n(x,t,k)$ for each $(x,t) \in U$ and the formulas (\ref{recoveruvn}) are valid for all $(x,t) \in U$.
\end{corollary}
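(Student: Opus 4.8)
The plan is to deduce everything from Theorem \ref{RHth} by exploiting the fact, already recorded in the text preceding RH problem \ref{RHn}, that the row vector $n := (\omega, \omega^2, 1)M$ built from the solution $M$ of RH problem \ref{RH problem for M} solves RH problem \ref{RHn}. First I would fix $(x,t) \in U$ (lying in the existence interval $[0,T)$ provided by the hypotheses of Theorem \ref{RHth}) and invoke that theorem to obtain the unique solution $M(x,t,\cdot)$ of RH problem \ref{RH problem for M}. Defining $n := (\omega,\omega^2,1)M$ as in (\ref{ndef}), properties (a)--(c) of RH problem \ref{RHn} are inherited at once: analyticity off $\Gamma$ is immediate, the jump relation $n_+ = (\omega,\omega^2,1)M_+ = (\omega,\omega^2,1)M_- v = n_- v$ follows from (\ref{Mjumpcondition}), and $n = (\omega,\omega^2,1) + O(k^{-1})$ as $k \to \infty$ follows from $M = I + O(k^{-1})$. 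Property (d), namely $n = O(1)$ as $k \to 0$, is precisely the statement that the explicit singular coefficients $\mathcal{M}_1^{(-2)}$ and $\mathcal{M}_1^{(-1)}$ in (\ref{explicit Mcalpm2p})--(\ref{explicit Mcalpm1p}) are annihilated on the left by $(\omega,\omega^2,1)$, which one checks directly using $1 + \omega + \omega^2 = 0$. This shows that RH problem \ref{RHn} has a solution.

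For uniqueness I would combine this existence statement with the standing hypothesis of the corollary, which assumes the solution of RH problem \ref{RHn} to be unique for each $(x,t) \in U$ whenever it exists. Hence RH problem \ref{RHn} has exactly one solution $n(x,t,\cdot)$ for each $(x,t) \in U$, and it necessarily coincides with $(\omega,\omega^2,1)M$.

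The only genuine computation is the verification of the recovery formulas (\ref{recoveruvn}); the point is that replacing $M_{33}$ by $n_3$ in (\ref{recoveruv}) changes nothing. Writing $n_3 = \omega M_{13} + \omega^2 M_{23} + M_{33}$ and inserting the large-$k$ expansion of $M$, I obtain $\lim_{k\to\infty} k(n_3 - 1) = \omega M_{13}^{(1)} + \omega^2 M_{23}^{(1)} + M_{33}^{(1)}$. By (\ref{singRHMatinftyb}) we have $M_{13}^{(1)} = M_{12}^{(1)} = 0$, and feeding the large-$k$ expansion into the cyclic symmetry $M(x,t,k) = \mathcal{A}M(x,t,\omega k)\mathcal{A}^{-1}$ of (\ref{singRHsymm}) gives $M^{(1)} = \omega^2 \mathcal{A} M^{(1)} \mathcal{A}^{-1}$, from which $M_{23}^{(1)} = \omega^2 M_{12}^{(1)} = 0$. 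Therefore $\lim_{k\to\infty} k(n_3 - 1) = M_{33}^{(1)} = \lim_{k\to\infty} k(M_{33}-1)$, and (\ref{recoveruvn}) follows by differentiating this identity in $x$ and $t$ and comparing with (\ref{recoveruv}).

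The hard part, such as it is, is pure bookkeeping at the two singular points $k = 0$ and $k = \infty$: one must confirm the left-annihilation of the $k^{-2}$ and $k^{-1}$ coefficients at the origin to secure property (d), and track the $1/k$ coefficient at infinity, using the $\mathcal{A}$-symmetry to clear the off-diagonal contributions $M_{13}^{(1)}$ and $M_{23}^{(1)}$. No analytic input beyond Theorem \ref{RHth} and the assumed conditional uniqueness of RH problem \ref{RHn} is needed.
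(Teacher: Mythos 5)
Your proposal is correct and follows essentially the same route as the paper: the paper's (implicit) proof consists precisely of observing that $n := (\omega,\omega^2,1)M$ solves RH problem \ref{RHn} because the singular coefficients (\ref{explicit Mcalpm2p})--(\ref{explicit Mcalpm1p}) are annihilated on the left by $(\omega,\omega^2,1)$, and then invoking the assumed conditional uniqueness together with the recovery formulas (\ref{recoveruv}). Your additional verification that $\lim_{k\to\infty}k(n_3-1)=M_{33}^{(1)}$ via $M_{13}^{(1)}=0$ and $M_{23}^{(1)}=\omega^2 M_{12}^{(1)}=0$ (from (\ref{singRHMatinftyb}) and the $\mathcal{A}$-symmetry) is a correct filling-in of a detail the paper leaves tacit.
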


\begin{remark}[Comparison between the ``good'' and ``bad'' Boussinesq equations]
The above results are all concerned with the ``good'' Boussinesq equation. 
As mentioned in the introduction, an inverse scattering transform formalism for the solution of the ``bad'' Boussinesq equation (more precisely, for equation (\ref{boussinesqnouxx}) with $\sigma = -1$) was developed already in early 1980s \cite{DTT1982}. It may seem counterintuitive that the ``good'' case be dealt with after the ``bad'' case. However, as explained before equation (\ref{goodboussinesq}), the designation ``bad'' refers to the fact that the equation is linearly ill-posed; it does not necessarily imply that the equation is more difficult to handle in other respects. In fact, the methods presented in this paper can be adapted to handle the spectral analysis also for the ``bad'' Boussinesq equation (\ref{badboussinesq}). 
On the other hand, since the resulting RH problems are different and the ``bad'' Boussinesq equation is linearly ill-posed, there are major differences between the ``good'' and ``bad'' cases when it comes to determining formulas for the long-time asymptotics. 
\end{remark}

\section{Spectral analysis}\label{specsec}
\subsection{Preliminaries}

The Lax pair (\ref{operatorLaxpair}) can be expressed as the compatibility condition of the equations
\begin{align}\label{scalarLaxpair}
\mathsf{L}\varphi = k^3 \varphi, \qquad \varphi_t = \mathsf{A}\varphi,
\end{align}
where $\varphi(x,t,k)$ is a scalar-valued eigenfunction and $k \in \C$ is a spectral parameter. We can rewrite the first equation in (\ref{scalarLaxpair}), which is a third-order differential equation, as a first-order system by defining the vector
$$\Phi = \begin{pmatrix} \varphi \\\varphi_x \\ \varphi_{xx} \end{pmatrix}.$$
In terms of $\Phi$, the equations in (\ref{scalarLaxpair}) can be written as
\begin{align}\label{Philax}
\begin{cases}
 \Phi_{x} = \tilde{L}\Phi, \\
 \Phi_{t} = \tilde{Z}\Phi,
\end{cases}
\end{align}
where $\tilde{L} \equiv \tilde{L}(x,t,k)$ and $\tilde{Z} \equiv \tilde{Z}(x,t,k)$ are given by
\begin{equation}
\tilde{L} = \begin{pmatrix}
0 & 1 & 0 \\
0 & 0 & 1 \\
k^{3}-u_{x}-v & -2u & 0
\end{pmatrix}, \qquad \tilde{Z} = \begin{pmatrix}
\frac{4}{3}u & 0 & 1 \\
k^{3} + \frac{u_{x}}{3}-v & -\frac{2}{3}u & 0 \\
\frac{u_{xx}}{3}-v_{x} & k^{3} - \frac{u_{x}}{3}-v & - \frac{2}{3}u
\end{pmatrix}.
\end{equation}
In order to treat the three linearly independent solutions at once, we rewrite (\ref{Philax}) in matrix form as
\begin{equation}\label{Lax pair tilde}
\begin{cases}
 \tilde{X}_{x} = \tilde{L}\tilde{X}, \\
 \tilde{X}_{t} = \tilde{Z}\tilde{X},
\end{cases}
\end{equation}
where $\tilde{X}(x,t,k)$ is a $3 \times 3$-matrix valued function.  
It can easily be verified by direct computation that \eqref{boussinesqsystem} is the compatibility condition of (\ref{Lax pair tilde}).
Note that $\tilde{L}$ and $\tilde{Z}$ are traceless. The next transformation diagonalizes the highest-order terms in $k$ as $k \to \infty$ of the Lax pair \eqref{Lax pair tilde} and ensures that the lower-order terms decay as $x \to \pm \infty$. Let us define $\check{X}$ by
\begin{equation}\label{X hat to X tilde}
\tilde{X}(x,t,k) = P(k) \check{X}(x,t,k),
\end{equation}
where
\begin{equation}
P(k) = \begin{pmatrix}
\omega & \omega^{2} & 1  \\
\omega^{2} k & \omega k & k \\
k^{2} & k^{2} & k^{2}
\end{pmatrix}, \qquad \omega = e^{\frac{2\pi i}{3}}.
\end{equation}
We verify from \eqref{Lax pair tilde} that
\begin{equation}\label{Xhatlax}
\begin{cases}
\check{X}_{x} = L\check{X}, \\
\check{X}_{t} = Z\check{X},
\end{cases}
\end{equation}
where
\begin{equation*}
L = P^{-1}\tilde{L}P, \qquad Z = P^{-1}\tilde{Z}P.
\end{equation*}
Note that the above transformation is valid only for $k \in \mathbb{C}\setminus \{0\}$, because
\begin{equation}
\det P(k) = -3 \omega (1-\omega) k^{3}.
\end{equation}
Let us define
\begin{equation}
J = \begin{pmatrix}
\omega & 0 & 0 \\
0 & \omega^{2} & 0 \\
0 & 0 & 1
\end{pmatrix}.
\end{equation}
The matrices $L$ and $Z$ can be written as
\begin{equation}\label{U and V def}
L = \mathsf{U}+\mathcal{L}, \qquad Z=\mathsf{V}+\mathcal{Z},
\end{equation}
where the diagonal matrices $\mathcal{L} \equiv \mathcal{L}(k)$ and $\mathcal{Z} \equiv \mathcal{Z}(k)$ are given by
\begin{equation}\label{def of mathcal L and mathcal Z}
\mathcal{L} = k J, \qquad \mathcal{Z} = k^{2} J^{2},
\end{equation}
and $\mathsf{U}$ and $\mathsf{V}$ are given by
\begin{subequations}\label{UVexpressions}
\begin{align}
& \mathsf{U}(x,t,k) = \frac{\mathsf{U}^{(2)}(x,t)}{k^{2}} + \frac{\mathsf{U}^{(1)}(x,t)}{k}, \label{expression for U} \\
& \mathsf{V}(x,t,k) = \frac{\mathsf{V}^{(2)}(x,t)}{k^{2}} + \frac{\mathsf{V}^{(1)}(x,t)}{k} + \mathsf{V}^{(0)}(x,t), \label{expression for V}
\end{align}
\end{subequations}
with
\begin{align*}
& \mathsf{U}^{(2)} = - \frac{v+u_{x}}{3} \begin{pmatrix}
\omega & \omega^{2} & 1 \\
\omega & \omega^{2} & 1 \\
\omega & \omega^{2} & 1 
\end{pmatrix}, \qquad \mathsf{U}^{(1)} = - \frac{2u}{3}\begin{pmatrix}
\omega^{2} & \omega & 1 \\
\omega^{2} & \omega & 1 \\
\omega^{2} & \omega & 1 
\end{pmatrix}, \\
& \mathsf{V}^{(2)} = \frac{-3v_{x}+u_{xx}}{9}\begin{pmatrix}
\omega & \omega^{2} & 1 \\
\omega & \omega^{2} & 1 \\
\omega & \omega^{2} & 1
\end{pmatrix}, \qquad \mathsf{V}^{(0)} = \frac{2u}{3} \begin{pmatrix}
0 & \omega & \omega^{2} \\
\omega^{2} & 0 & \omega \\
\omega & \omega^{2} & 0
\end{pmatrix}, \\
& \mathsf{V}^{(1)} = \frac{v}{3}\begin{pmatrix}
-2\omega^{2} & \omega^{2} & \omega^{2} \\
\omega & -2 \omega & \omega \\
1 & 1 & -2 
\end{pmatrix} + \frac{(1-\omega)u_{x}}{9}\begin{pmatrix}
0 & 1 & -1 \\
-\omega^{2} & 0 & \omega^{2} \\
\omega & -\omega & 0
\end{pmatrix}.
\end{align*}
It is directly seen from \eqref{U and V def} and \eqref{UVexpressions} that
\begin{equation}
L(x,t,k) = \mathcal{L} + O(1/k), \qquad Z(x,t,k) = \mathcal{Z} + O(1), \qquad \mbox{as } k \to \infty,
\end{equation}
and since $u,v \in \mathcal{S}(\mathbb{R})$, we have
\begin{equation}
\lim_{x \to \pm \infty} \mathsf{U} = 0, \qquad \lim_{x \to \pm \infty} \mathsf{V} = 0, \qquad \mathcal{L} = \lim_{x \to \pm\infty} L, \qquad \mathcal{Z} = \lim_{x \to \pm\infty} Z.
\end{equation}
We denote the diagonal entries of $\mathcal{L}$ and $\mathcal{Z}$ by $\{l_{j}(k)\}_{j=1}^{3}$ and $\{z_{j}(k)\}_{j=1}^{3}$, respectively:
\begin{equation}
\mathcal{L} = \mbox{diag}(l_{1},l_{2},l_{3}), \qquad \mathcal{Z} = \mbox{diag}(z_{1},z_{2},z_{3}).
\end{equation}
Using \eqref{U and V def}, we can rewrite \eqref{Xhatlax} as
\begin{equation}\label{Lax pair hat}
\begin{cases}
 \check{X}_{x} - \mathcal{L}\check{X} = \mathsf{U}\check{X}, \\
 \check{X}_{t} - \mathcal{Z}\check{X} = \mathsf{V}\check{X}.
\end{cases}
\end{equation}
Finally, the transformation 
\begin{equation}
\check{X} = X e^{\mathcal{L}x + \mathcal{Z}t}
\end{equation}
transforms \eqref{Lax pair hat} into
\begin{equation}\label{Xlax}
\begin{cases}
 X_{x} - [\mathcal{L},X] = \mathsf{U} X, \\
 X_{t} - [\mathcal{Z},X] = \mathsf{V} X.
\end{cases}
\end{equation}

\subsubsection{Symmetries of $L$ and $Z$} The matrices $L$ and $Z$ satisfy the $\mathbb{Z}_{3}$ symmetry
\begin{equation}\label{Acaldef}
L(k) = \mathcal{A}L(\omega k) \mathcal{A}^{-1}, \quad Z(k) = \mathcal{A}Z(\omega k) \mathcal{A}^{-1},
\end{equation}
and, since $u$ and $v$ are real-valued, the $\mathbb{Z}_{2}$ symmetry
\begin{equation}\label{Bcaldef}
L(k) = \mathcal{B}\overline{L(\overline{k})} \mathcal{B}, \quad Z(k) = \mathcal{B}\overline{Z(\overline{k})} \mathcal{B},
\end{equation}
where $\mathcal{A}$ and $\mathcal{B}$ are the matrices defined in (\ref{def of Acal and Bcal}).
%

\subsection{The eigenfunctions $X$ and $Y$} 
From now until the end of Section \ref{specsec}, we fix $t=0$ and abuse notation by writing $\mathsf{U}(x,k)$ for $\mathsf{U}(x,0,k)$. Consider the $x$-part of the Lax pair \eqref{Xlax} evaluated at $t = 0$:
\begin{align}\label{xpart}
X_x - [\mathcal{L}, X] = \mathsf{U} X.
\end{align}
We define two $3 \times 3$-matrix valued solutions $X(x,k)$ and $Y(x,k)$ of (\ref{xpart}) as the solutions of the linear Volterra integral equations
\begin{subequations}\label{XYdef}
\begin{align}  \label{XYdefa}
 & X(x,k) = I - \int_x^{\infty} e^{(x-x')\widehat{\mathcal{L}(k)}} (\mathsf{U}X)(x',k) dx',
  	\\ \label{XYdefb}
&  Y(x,k) = I + \int_{-\infty}^x e^{(x-x')\widehat{\mathcal{L}(k)}} (\mathsf{U}Y)(x',k) dx'.
\end{align}
\end{subequations}
We decompose the complex $k$-plane into the six open subsets $\{D_n\}_1^6$ defined by (see Figure \ref{Gamma.pdf}) 
\begin{align*}
&D_1 = \{k \in \C\,|\, \re l_1 < \re l_2 < \re l_3\},
	\\
&D_2 = \{k \in \C\,|\, \re l_1 < \re l_3 < \re l_2\},
	\\
&D_3 = \{k \in \C\,|\, \re l_3 < \re l_1 < \re l_2\},
	\\
&D_4 = \{k \in \C\,|\, \re l_3 < \re l_2 < \re l_1\},
	\\
&D_5 = \{k \in \C\,|\, \re l_2 < \re l_3 < \re l_1\},
	\\
&D_6 = \{k \in \C\,|\, \re l_2 < \re l_1 < \re l_3\},
\end{align*}
and let $\mathrm{S} = \{k \in \C \, | \, \arg k \in (\frac{2\pi}{3}, \frac{4\pi}{3})\}$ denote the interior of $\bar{D}_3 \cup \bar{D}_4$. 

\begin{proposition}[Basic properties of $X$ and $Y$]\label{XYprop}
Suppose $u_0, v_0 \in \mathcal{S}(\R)$. 
Then the equations (\ref{XYdef}) uniquely define two $3 \times 3$-matrix valued solutions $X$ and $Y$ of (\ref{xpart}) with the following properties:
\begin{enumerate}[$(a)$]
\item The function $X(x, k)$ is defined for $x \in \R$ and $k \in (\omega^2 \bar{\mathrm{S}}, \omega \bar{\mathrm{S}}, \bar{\mathrm{S}}) \setminus \{0\}$. For each $k \in (\omega^2 \bar{\mathrm{S}}, \omega \bar{\mathrm{S}}, \bar{\mathrm{S}}) \setminus \{0\}$, $X(\cdot, k)$ is smooth and satisfies (\ref{xpart}).

\item The function $Y(x, k)$ is defined for $x \in \R$ and $k \in (-\omega^2 \bar{\mathrm{S}}, -\omega \bar{\mathrm{S}}, -\bar{\mathrm{S}}) \setminus \{0\}$. For each $k \in (-\omega^2 \bar{\mathrm{S}}, -\omega \bar{\mathrm{S}}, -\bar{\mathrm{S}}) \setminus \{0\}$, $Y(\cdot, k)$ is smooth and satisfies (\ref{xpart}).

\item For each $x \in \R$, the function $X(x,\cdot)$ is continuous for $k \in (\omega^2 \bar{\mathrm{S}}, \omega \bar{\mathrm{S}}, \bar{\mathrm{S}})\setminus \{0\}$ and analytic for $k \in (\omega^2 \mathrm{S}, \omega \mathrm{S}, \mathrm{S}) \setminus \{0\}$.

\item For each $x \in \R$, the function $Y(x,\cdot)$ is continuous for $k \in (-\omega^2 \bar{\mathrm{S}}, -\omega \bar{\mathrm{S}}, -\bar{\mathrm{S}})\setminus \{0\}$ and analytic for $k \in (-\omega^2 \mathrm{S}, -\omega \mathrm{S}, -\mathrm{S}) \setminus \{0\}$.

\item For each $x \in \R$ and each $j = 1, 2, \dots$, the partial derivative $\frac{\partial^j X}{\partial k^j}(x, \cdot)$ has a continuous extension to $(\omega^2 \bar{\mathrm{S}}, \omega \bar{\mathrm{S}}, \bar{\mathrm{S}})\setminus \{0\}$.

\item For each $x \in \R$ and each $j = 1, 2, \dots$, the partial derivative $\frac{\partial^j Y}{\partial k^j}(x, \cdot)$ has a continuous extension to $(-\omega^2 \bar{\mathrm{S}}, -\omega \bar{\mathrm{S}}, -\bar{\mathrm{S}})\setminus \{0\}$.

\item For each $n \geq 1$ and $\epsilon > 0$, there are bounded smooth positive functions $f_+(x)$ and $f_-(x)$ of $x \in \R$ with rapid decay as $x \to +\infty$ and $x \to -\infty$, respectively, such that the following estimates hold for $x \in \R$ and $ j = 0, 1, \dots, n$:
\begin{subequations}\label{Xest}
\begin{align}\label{Xesta}
& \bigg|\frac{\partial^j}{\partial k^j}\big(X(x,k) - I\big) \bigg| \leq
f_+(x), \qquad k \in (\omega^2 \bar{\mathrm{S}}, \omega \bar{\mathrm{S}}, \bar{\mathrm{S}}), \ |k| > \epsilon,
	\\ \label{Xestb}
& \bigg|\frac{\partial^j}{\partial k^j}\big(Y(x,k) - I\big) \bigg| \leq
f_-(x), \qquad k \in (-\omega^2 \bar{\mathrm{S}}, -\omega \bar{\mathrm{S}}, -\bar{\mathrm{S}}), \ |k| > \epsilon.
\end{align}
\end{subequations}

\item $X$ and $Y$ obey the following symmetries for each $x \in \R$:
\begin{subequations}\label{XYsymm}
\begin{align}
&  X(x, k) = \mathcal{A} X(x,\omega k)\mathcal{A}^{-1} = \mathcal{B} \overline{X(x,\overline{k})}\mathcal{B}, \qquad k \in (\omega^2 \bar{\mathrm{S}}, \omega \bar{\mathrm{S}}, \bar{\mathrm{S}})\setminus \{0\},
	\\
&  Y(x, k) = \mathcal{A} Y(x,\omega k)\mathcal{A}^{-1} = \mathcal{B} \overline{Y(x,\overline{k})}\mathcal{B}, \qquad k \in (-\omega^2 \bar{\mathrm{S}}, -\omega \bar{\mathrm{S}}, -\bar{\mathrm{S}})\setminus \{0\}.
\end{align}
\end{subequations}

\item If $u_0(x), v_0(x)$ have compact support, then, for each  $x \in \R$,  $X(x, k)$ and $Y(x, k)$ are defined and analytic for $k \in \C \setminus \{0\}$ and $\det X = \det Y = 1$.
\end{enumerate}
\end{proposition}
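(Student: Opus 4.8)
The plan is to analyze the two Volterra integral equations in (\ref{XYdef}) by successive approximations, exploiting the rapid decay of $\mathsf{U}$ together with the structure of the operator $e^{(x-x')\widehat{\mathcal{L}(k)}}$, which acts entrywise by $(e^{(x-x')\widehat{\mathcal{L}(k)}}A)_{ij} = e^{(x-x')(l_i - l_j)}A_{ij}$. In (\ref{XYdefa}) one integrates over $x' \geq x$, so the $(i,j)$ entry of the kernel carries the factor $e^{(x-x')(l_i - l_j)}$ with $x - x' \leq 0$, which is bounded by $1$ exactly when $\re(l_i - l_j) \geq 0$. Since the recursion decouples the columns, the $j$-th column of $X$ involves only the factors $e^{(x-x')(l_i - l_j)}$, $i = 1,2,3$, and is well-controlled precisely when $l_j$ has the smallest real part among $l_1, l_2, l_3$. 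As $l_1 = \omega k$, $l_2 = \omega^2 k$, $l_3 = k$, the third column is governed by the region $\re k = \min_i \re(\omega^i k)$, namely $\bar{\mathcal{S}}$, and the $\mathbb{Z}_3$ rotation $k \mapsto \omega k$ produces the domains $\omega^2 \bar{\mathcal{S}}$ and $\omega \bar{\mathcal{S}}$ for the first and second columns, as asserted in $(a)$; the equation (\ref{XYdefb}) for $Y$ integrates over $x' \leq x$ and yields the reflected sectors in $(b)$.

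First I would establish existence, uniqueness, and the estimates $(g)$ simultaneously. Writing $X = \sum_{m \geq 0} X_m$ with $X_0 = I$ and $X_{m+1} = -\int_x^\infty e^{(x-x')\widehat{\mathcal{L}(k)}}(\mathsf{U}X_m)\,dx'$, the boundedness of the exponentials on the closed sectors yields the standard Volterra bound $|X_m(x,k)| \leq \frac{1}{m!}\big(\int_x^\infty |\mathsf{U}(x',k)|\,dx'\big)^m$. Since (\ref{expression for U}) gives $\mathsf{U} = \mathsf{U}^{(2)}/k^2 + \mathsf{U}^{(1)}/k$ with $\mathsf{U}^{(1)}, \mathsf{U}^{(2)}$ Schwartz in $x$, the series converges uniformly for $x \in \R$ and $|k| > \epsilon$ inside the closed sectors, and its tail defines the bounded, smooth, positive function $f_+(x)$ with rapid decay as $x \to +\infty$ appearing in (\ref{Xesta}); the analogous argument handles $Y$ and (\ref{Xestb}). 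The exclusion of $k = 0$ and the factor $|k| > \epsilon$ both reflect the $k^{-2}$ singularity of $\mathsf{U}$ at the origin.

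Next I would establish the $k$-regularity $(c)$--$(f)$. On the open sectors the integrand is analytic in $k$ and the Neumann series converges locally uniformly, so $X(x,\cdot)$ is analytic there by Morera and Weierstrass; continuity on the closed sectors minus the origin follows from the uniform kernel bound and dominated convergence. For the derivatives, differentiating (\ref{XYdefa}) in $k$ brings down a factor $(x-x')$ from $\partial_k e^{(x-x')(l_i - l_j)}$ as well as $\partial_k \mathsf{U}$; the polynomial growth in $(x-x')$ is absorbed by the Schwartz decay of $\mathsf{U}$, so each $\partial_k^j X$ extends continuously to the closed sector minus $\{0\}$, giving $(e)$, $(f)$. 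The symmetries $(h)$ then follow from uniqueness: using (\ref{Acaldef})--(\ref{Bcaldef}), hence the corresponding symmetries of $\mathsf{U}$, together with $\mathcal{A}\mathcal{L}(\omega k)\mathcal{A}^{-1} = \mathcal{L}(k)$ and the reality of $u_0, v_0$, one checks that $\mathcal{A}X(x,\omega k)\mathcal{A}^{-1}$ and $\mathcal{B}\overline{X(x,\bar{k})}\mathcal{B}$ each solve the \emph{same} equation (\ref{XYdefa}) as $X(x,k)$, so they coincide with it.

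Finally, for $(i)$, if $u_0, v_0$ have compact support the integration in (\ref{XYdef}) is over a bounded $x'$-range, so the exponential factors are bounded for \emph{every} $k \in \C \setminus \{0\}$ irrespective of the sign of $\re(l_i - l_j)$; the Neumann series then converges for all such $k$, and $X(x,\cdot), Y(x,\cdot)$ extend analytically to $\C \setminus \{0\}$. For the determinant, I would note that $\hat{X} = X e^{\mathcal{L}x}$ solves $\hat{X}_x = (\mathcal{L} + \mathsf{U})\hat{X}$, and since $\tr(\mathcal{L} + \mathsf{U}) = \tr \tilde{L} = 0$, Liouville's formula shows that $\det \hat{X} = \det X$ is independent of $x$; letting $x \to +\infty$, where $X \to I$, gives $\det X \equiv 1$, and similarly $\det Y \equiv 1$. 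I expect the main technical obstacle to be the uniform control near the sector boundaries in combination with the origin: there the exponentials are merely oscillatory rather than decaying, so the $(x-x')$ factors generated by $k$-differentiation must be balanced against the decay of $\mathsf{U}$ uniformly up to the boundary while staying away from the $k = 0$ singularity.
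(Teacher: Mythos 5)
Your overall strategy coincides with the paper's own proof, which is itself only a short sketch of the same points with details deferred to \cite{DT1979} and \cite{HLNonlinearFourier}: columnwise analysis of the Volterra equations, boundedness of the factors $e^{(x-x')(l_i-l_j)}$ on the closed sectors, analyticity and continuity from the locally uniform convergence of the iteration, the symmetries $(h)$ from the identities $F(k)=\mathcal{A}F(\omega k)\mathcal{A}^{-1}=\mathcal{B}\overline{F(\overline{k})}\mathcal{B}$ for $F=\mathcal{L},\mathsf{U}$ together with uniqueness, and, for $(i)$, analyticity on $\C\setminus\{0\}$ plus Liouville's formula with $\tr(\mathcal{L}+\mathsf{U})=0$. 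Your identification of the column domains and your treatment of $(a)$--$(f)$, $(h)$, $(i)$ are correct and in fact more detailed than what the paper writes out.

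The one genuine gap is part $(g)$ for $j\geq 1$, and it is exactly the point you flag at the end without resolving. Your mechanism --- absorbing the factors $(x-x')^{j}$ produced by $k$-differentiation into the Schwartz decay of $\mathsf{U}$ --- yields $\int_x^\infty |x-x'|^j\,|\mathsf{U}(x',k)|\,dx' = O\big((1+|x|)^j\big)$ as $x\to-\infty$; this suffices for the fixed-$x$ statements $(e)$--$(f)$, but it does not produce a function $f_+$ that is \emph{bounded} on all of $\R$, which is what (\ref{Xesta}) asserts. Moreover, on the boundary rays of the sectors this growth is real and cannot be estimated away: for compactly supported data one has $X(x,k)=e^{x\widehat{\mathcal{L}(k)}}s(k)$ for all $x$ to the left of the support (this is how (\ref{XYs}) is derived), so in particular $X_{13}(x,k)=e^{x(l_1-l_3)}s_{13}(k)$ there. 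On the ray $\omega\R_+\subset\bar{\mathcal{S}}$ the difference $l_1-l_3=(\omega-1)k$ is purely imaginary, so
\begin{equation*}
\partial_k X_{13}(x,k)=x(\omega-1)e^{x(l_1-l_3)}s_{13}(k)+e^{x(l_1-l_3)}\,\partial_k s_{13}(k)
\end{equation*}
grows linearly in $|x|$ as $x\to-\infty$ whenever $s_{13}(k)\neq 0$, which is the generic situation. Hence for $j\geq 1$ the estimate can only hold uniformly up to the sector boundary with an $x$-dependent weight such as $(1+|x|)^j$ (compare the weights appearing in (\ref{bound betai})--(\ref{bound deltai}) of Proposition \ref{XYat1prop}), or without weight on compact subsets of the \emph{open} sectors, where $\re(l_1-l_3)>0$ damps the factor $x$. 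So the obstacle you name is not a mere technicality that a sharper bookkeeping of $(x-x')$ against the decay of $\mathsf{U}$ would overcome; your proposal, as written, proves $(g)$ only for $j=0$. To be fair, the paper's one-paragraph proof is equally silent on this point, so this is as much a caveat about the statement as about your argument.
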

\begin{proof}
The proof follows from a relatively straightforward analysis of the Volterra equations (\ref{XYdef}); see e.g. \cite{DT1979} or Theorem 3.1 in \cite{HLNonlinearFourier} for similar proofs.
The key point of the argument is as follows. The third columns of the matrix equations (\ref{XYdef}) involve the exponentials
$$e^{(x - x')(l_1 - l_3)}, \qquad e^{(x - x')(l_2 - l_3)}.$$
These exponentials are bounded in the following regions of the complex $k$-plane:
\begin{align*}
\{\re  l_3 \leq \re  l_1\} \cap \{\re  l_3 \leq \re  l_2\} \quad \text{if} \quad x \leq x',
	\\
\{\re  l_1 \leq \re  l_3\} \cap \{\re  l_2 \leq \re  l_3\} \quad \text{if} \quad x \geq x'.
\end{align*}
Since the equations in (\ref{XYdef}) are Volterra integral equations, these boundedness properties imply that the third column vectors of $X$ and $Y$ are bounded and analytic for $k$ in $\mathrm{S}$ and $-\mathrm{S}$, respectively, as long as $k$ stays away from the singularity at $0$.
The symmetries (\ref{XYsymm}) hold because if $F$ denotes one of the $3 \times 3$-matrix valued functions $\mathcal{L}$ and $\mathsf{U}$, then $F(k) = \mathcal{A} F(\omega k)\mathcal{A}^{-1}
 = \mathcal{B} \overline{F(\overline{k})}\mathcal{B}$ for $k \in \C$.
 \end{proof}

\subsection{Asymptotics of $X$ and $Y$ as $k \to \infty$}
We next consider the behavior of the eigenfunctions $X$ and $Y$ as $k \to \infty$. Our goal is to prove Proposition \ref{XYprop2} which essentially states that the asymptotics of $X$ and $Y$ as $k \to \infty$ can be obtained by considering formal power series solutions of (\ref{xpart}). Let us first find these formal solutions.

Equation (\ref{xpart}) admits formal power series solutions
\begin{align*}
& X_{formal}(x,k) = I + \frac{X_1(x)}{k} + \frac{X_2(x)}{k^2} + \cdots,
	\\ \nonumber
& Y_{formal}(x,k) =  I + \frac{Y_1(x)}{k} + \frac{Y_2(x)}{k^2} + \cdots,
\end{align*}
normalized at $x = \infty$ and $x = -\infty$, respectively:
\begin{align}\label{Fjnormalization}
\lim_{x\to \infty} X_j(x) = \lim_{x\to -\infty} Y_j(x) = 0, \qquad j \geq 1.
\end{align}
Indeed, the function $\mathsf{U}$ is of the form
$$\mathsf{U}(x,k) = \frac{\mathsf{U}_{1}(x)}{k} + \frac{\mathsf{U}_{2}(x)}{k^2},$$
where
$$ \mathsf{U}_{1}(x) = \mathsf{U}^{(1)}(x,0), \qquad \mathsf{U}_{2}(x) = \mathsf{U}^{(2)}(x,0).
$$
Substituting
$$X = I + \frac{X_1(x)}{k} + \frac{X_2(x)}{k^2} + \cdots$$
into (\ref{xpart}), the off-diagonal terms of $O(k^{-j})$ and the diagonal terms of $O(k^{-j-1})$ yield the relations
\begin{align}\label{xrecursive}
\begin{cases}
[J, X_{j+1}] = \partial_x X_{j}^{(o)} - (\mathsf{U}_{1}X_{j-1})^{(o)} - (\mathsf{U}_{2}X_{j-2})^{(o)},
	\\
\partial_x X_{j+1}^{(d)} = (\mathsf{U}_{1}X_{j})^{(d)} +(\mathsf{U}_{2}X_{j-1})^{(d)},$$
\end{cases}
\end{align}
since $\mathcal{L} = kJ$ (see \eqref{def of mathcal L and mathcal Z}), and where $A^{(d)}$ and $A^{(o)}$ denote the diagonal and off-diagonal parts of a $3 \times 3$ matrix $A$, respectively.
The coefficients $\{X_j(x), Y_j(x)\}$ are uniquely determined recursively from (\ref{Fjnormalization})-(\ref{xrecursive}), the equations obtained by replacing $\{X_j\}$ with $\{Y_j\}$ in (\ref{xrecursive}), and the initial assignments
$$X_{-2} = Y_{-2} = 0, \qquad X_{-1} = Y_{-1} = 0, \qquad X_0 = Y_0 = I.$$
The first few coefficients are given by
\begin{align}\nonumber
X_1(x) = & \; -\frac{2}{3}  \int_{\infty}^{x} u_0(x^{\prime}) dx' \begin{pmatrix} \omega^2 & 0 & 0 \\ 
0 & \omega & 0 \\ 
0 & 0 & 1
\end{pmatrix},
	\\ \nonumber
X_2(x) = &\; \frac{2 u_0(x)}{3(1-\omega)}\begin{pmatrix} 0 & 1 & -1 \\ 
-\omega & 0 & \omega \\ 
\omega^{2} & -\omega^{2} & 0 \end{pmatrix}
	\\ \nonumber
& - \frac{1}{3} \int_{\infty}^x \left(v_0 + u_{0x} + 2u_0 (X_1)_{33}\right)(x')dx'  \begin{pmatrix} \omega & 0 & 0 \\ 0 & \omega^2 & 0 \\ 0 & 0 & 1 \end{pmatrix},
	\\ \nonumber
Y_1(x) = & -\frac{2}{3}  \int_{-\infty}^x u_0(x^{\prime})dx' \begin{pmatrix} \omega^2 & 0 & 0 \\ 
0 & \omega & 0 \\ 
0 & 0 & 1
\end{pmatrix},
	\\ \nonumber
Y_2(x) = &\; \frac{2 u_0(x)}{3(1-\omega)}\begin{pmatrix} 0 & 1 & -1 \\ 
-\omega & 0 & \omega \\ 
\omega^{2} & -\omega^{2} & 0 \end{pmatrix}
	\\ \nonumber
& - \frac{1}{3} \int_{-\infty}^x \left(v_0 + u_{0x} + 2u_0 (Y_1)_{33}\right)(x')dx'  \begin{pmatrix} \omega & 0 & 0 \\ 0 & \omega^2 & 0 \\ 0 & 0 & 1 \end{pmatrix}.
\end{align}

We can now describe the behavior of $X$ and $Y$ for large $k$.

\begin{proposition}[Asymptotics of $X$ and $Y$ as $k \to \infty$]\label{XYprop2}
Suppose $u_0, v_0 \in \mathcal{S}(\R)$. 
As $k \to \infty$, $X$ and $Y$ coincide to all orders with $X_{formal}$ and $Y_{formal}$, respectively. More precisely, let $p \geq 0$ be an integer. Then the functions
\begin{align}\label{Xpdef}
&X_{(p)}(x,k) := I + \frac{X_1(x)}{k} + \cdots + \frac{X_{p}(x)}{k^{p}},
	\\ \nonumber
&Y_{(p)}(x,k) := I + \frac{Y_1(x)}{k} + \cdots + \frac{Y_{p}(x)}{k^{p}},
\end{align}
are well-defined and, for each integer $j \geq 0$,
\begin{subequations}\label{Xasymptotics}
\begin{align}\label{Xasymptoticsa}
& \bigg|\frac{\partial^j}{\partial k^j}\big(X - X_{(p)}\big) \bigg| \leq
\frac{f_+(x)}{|k|^{p+1}}, \qquad x \in \R, \  k \in (\omega^2 \bar{\mathrm{S}}, \omega \bar{\mathrm{S}}, \bar{\mathrm{S}}), \ |k| \geq 2,
	\\ \label{Xasymptoticsb}
& \bigg|\frac{\partial^j}{\partial k^j}\big(Y - Y_{(p)}\big) \bigg| \leq
\frac{f_-(x)}{|k|^{p+1}}, \qquad x \in \R, \  k \in (-\omega^2 \bar{\mathrm{S}}, -\omega \bar{\mathrm{S}}, -\bar{\mathrm{S}}), \ |k| \geq 2,
\end{align}
\end{subequations}
where $f_+(x)$ and $f_-(x)$ are bounded smooth positive functions of $x \in \R$ with rapid decay as $x \to +\infty$ and $x \to -\infty$, respectively.
\end{proposition}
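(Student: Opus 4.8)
The plan is to prove (\ref{Xasymptotics}) by comparing the true eigenfunctions with the truncated formal series through a Volterra integral equation for the difference. By the $\mathbb{Z}_3$ symmetry (\ref{XYsymm}) — which is also respected by the formal coefficients, since $\mathsf{U}$ and $\mathcal{L}$ satisfy it — and since $|k|$ is preserved under $k \mapsto \omega k$, it suffices to estimate the third column of $X$ on the sector $\bar{\mathcal{S}}$; the other columns follow by symmetry, and the argument for $Y$ is identical with $\int_x^\infty$ replaced by $\int_{-\infty}^x$ and $x\to+\infty$ replaced by $x\to-\infty$. On $\bar{\mathcal{S}}$ the kernels $e^{(x-x')(l_i-l_3)}$, $i=1,2$, are uniformly bounded for $x'\geq x$, which is exactly what the Volterra structure of (\ref{XYdefa}) requires. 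The coefficients $X_j$ are well-defined by (\ref{xrecursive}) and (\ref{Fjnormalization}) because $u_0,v_0$ are Schwartz, so $X_{(p)}$ is well-defined and $X_{(p)}(x,k)\to I$ as $x\to+\infty$.

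First I would record the defect of the partial sum. Substituting $X_{(p)}$ into (\ref{xpart}) produces a residual $R_p := \partial_x X_{(p)} - [\mathcal{L},X_{(p)}] - \mathsf{U}X_{(p)}$, and a Duhamel/variation-of-parameters computation (using $X_{(p)}\to I$ at $+\infty$) shows that $E_p := X - X_{(p)}$ satisfies
\[ E_p(x,k) = G_p(x,k) - \int_x^\infty e^{(x-x')\widehat{\mathcal{L}(k)}}\,(\mathsf{U}E_p)(x',k)\,dx', \qquad G_p(x,k) := \int_x^\infty e^{(x-x')\widehat{\mathcal{L}(k)}}\,R_p(x',k)\,dx'. \]
Thus $E_p$ solves the same Volterra equation as $X$, but with $I$ replaced by the forcing term $G_p$.

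The crucial step is the estimate $G_p = O(k^{-(p+1)})$, uniformly in $x$ and with a prefactor decaying rapidly as $x\to+\infty$. Expanding $R_p$ in powers of $k$ and inserting $\mathsf{U}=\mathsf{U}_1/k+\mathsf{U}_2/k^2$ (see (\ref{expression for U})), the two lines of the recursion (\ref{xrecursive}) are precisely the conditions that make every coefficient of $k^{-m}$ in $R_p$ vanish for $m\leq p-1$; at order $k^{-p}$ the diagonal part also cancels by the second line of (\ref{xrecursive}), and only a purely off-diagonal residual $[J,X_{p+1}](x)\,k^{-p}$ survives, together with harmless contributions at orders $k^{-(p+1)}$ and $k^{-(p+2)}$. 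A single integration by parts in $x'$ then handles the leading off-diagonal term: using $\partial_{x'}e^{(x-x')\widehat{\mathcal{L}}}=-\widehat{\mathcal{L}}\,e^{(x-x')\widehat{\mathcal{L}}}$ and that $\widehat{\mathcal{L}}$ multiplies the $(i,j)$ entry by $l_i-l_j=k(\omega^i-\omega^j)$ (invertible off the diagonal and of size $\sim k$), one gains a factor $k^{-1}$; the boundary term at $x'=\infty$ vanishes by (\ref{Fjnormalization}), and the boundary term at $x'=x$ together with the remaining integral is $O(k^{-(p+1)})$. The diagonal contributions at orders $k^{-(p+1)}$, $k^{-(p+2)}$ integrate directly, the Schwartz decay of $u_0,v_0$ furnishing the rapidly decaying prefactor $f_+$. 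This residual bookkeeping — in which diagonal and off-diagonal parts must be tracked separately and the off-diagonal part requires the integration by parts — is the main obstacle I anticipate.

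Finally I would invert the Volterra operator and propagate the estimate to $k$-derivatives. For $|k|\geq 2$ and $k\in\bar{\mathcal{S}}$ the operator $E\mapsto \int_x^\infty e^{(x-x')\widehat{\mathcal{L}}}(\mathsf{U}E)\,dx'$ has a Neumann series whose $n$-th iterate is bounded by $\frac{1}{n!}\big(\int_x^\infty|\mathsf{U}(x',k)|\,dx'\big)^n$, and since $\mathsf{U}=O(k^{-1})$ with Schwartz $x$-decay this sum is uniformly bounded in $k$; hence $|E_p|\leq C|G_p|$, which gives (\ref{Xasymptoticsa}) for $j=0$ with the prefactor $f_+$ inherited from $G_p$. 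For $j\geq 1$ I would differentiate the integral equation $j$ times in $k$ and induct on $j$: the derivatives falling on $e^{(x-x')\widehat{\mathcal{L}}}$ produce polynomial factors $(x-x')$ that are absorbed by the rapid decay of $\mathsf{U}$ (keeping the Volterra operator bounded), while $\partial_k^j G_p$ remains $O(k^{-(p+1)})$ since differentiating the explicit integration-by-parts representation of $G_p$ does not worsen the power of $k$. The continuity of the $k$-derivatives asserted in Proposition \ref{XYprop} guarantees that all the objects appearing are legitimate, and the induction closes, yielding (\ref{Xasymptoticsa})–(\ref{Xasymptoticsb}) for all $j$.
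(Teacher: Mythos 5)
Your proposal is correct in its main structure and reaches the right estimates, but by a route that genuinely differs from the paper's. The paper compares $X$ with the truncated series \emph{multiplicatively}: it studies the quotient $X_{(p)}^{-1}X$, and in the implementation it points to (the proof of Lemma \ref{Matinftylemma}) it pads the truncation by one extra term, $M_{(p)}:=X_{(p+1)}$. Because of that extra term, the residual $\mathsf{U}M_{(p)}+[\mathcal{L},M_{(p)}]-\partial_{x}M_{(p)}$ --- whose top-order part is the off-diagonal matrix $[J,X_{p+2}]k^{-(p+1)}$ --- is already $O(k^{-p-1})$ pointwise with a Schwartz prefactor, so the quotient solves a Volterra equation with a potential of size $|k|^{-p-1}$, no integration by parts is needed, and the superfluous term $X_{p+1}k^{-(p+1)}$ is discarded at the end for free. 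You instead compare \emph{additively}, $E_p=X-X_{(p)}$, whose forcing $G_p$ contains the off-diagonal piece $[J,X_{p+1}]k^{-p}$, one power short of the target; you recover the missing $k^{-1}$ by integrating by parts in $x'$, using that $l_i-l_j=k(\omega^i-\omega^j)$ is nonvanishing and of size $|k|$ off the diagonal, and that the boundary term at $x'=\infty$ dies by (\ref{Fjnormalization}). This is the classical Deift--Trubowitz device, and your bookkeeping of the recursion (\ref{xrecursive}) (cancellation through order $k^{-(p-1)}$, purely off-diagonal residual at order $k^{-p}$) is exactly right. Both routes finish with a Neumann series for the Volterra operator, and your symmetry reduction to the third column is legitimate since the coefficients $X_j$ inherit the symmetries. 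What each buys: the paper's padding trick avoids integration by parts and transfers verbatim to the Fredholm setting of Lemma \ref{Matinftylemma}, where the contours $\gamma_{ij}^n$ run in both directions and boundary terms would be delicate; your argument works with $X_{(p)}$ itself and exposes the mechanism --- the recursion forces the top-order residual to be off-diagonal, hence oscillatory, hence improvable by one power.

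One step of your derivative induction deserves a concrete flag. You claim the factors $(x-x')$ created by $\partial_k$ hitting $e^{(x-x')\widehat{\mathcal{L}}}$ are ``absorbed by the rapid decay of $\mathsf{U}$ (keeping the Volterra operator bounded)''. That is true uniformly for $x$ in half-lines $[x_0,\infty)$, but not uniformly on $\R$: $\int_x^\infty|x-x'|\,|\mathsf{U}(x',k)|\,dx'$ grows like $|x|\,\|\mathsf{U}(\cdot,k)\|_{L^1(\R)}$ as $x\to-\infty$, and for $k$ on the boundary rays of the sectors the exponentials are purely oscillatory and provide no damping to compensate. So the induction, as written, only yields $|\partial_k^j E_p|\lesssim (1+|x|)^j f_+(x)|k|^{-p-1}$ at the non-normalized end, not the bound with bounded prefactor asserted in (\ref{Xasymptoticsa}). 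To be fair, this is not a defect of your route relative to the paper's: differentiating the quotient equation meets exactly the same loss, the same caveat attaches to the paper's own bounds (\ref{Xest}), and polynomial growth in $x$ is harmless for every later use (the eigenfunctions are always integrated against Schwartz-decaying quantities, as in (\ref{sdef})). But if the bounded-$f_+$ form of (\ref{Xasymptotics}) for $j\geq1$ is to be taken literally on the closed sectors, your proof --- like the sketch the paper gives --- needs an additional argument on and near the boundary rays, and you should either supply it or weaken the prefactor to $(1+|x|)^jf_+(x)$.
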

\begin{proof}
The proof follows by considering the equation satisfied by the quotient $X_{(p)}^{-1} X$; see e.g. Theorem 3.3 in \cite{HLNonlinearFourier} for a similar proof in the context of the sine-Gordon equation. See also the proof of Lemma \ref{Matinftylemma}.
\end{proof}

\subsection{Asymptotics of $X$ and $Y$ as $k \to 0$}
If $u_0, v_0$ have compact support, then the next proposition shows that $X$ and $Y$ have at most double poles at $k = 0$ with residues of the form (\ref{Cjminus1}). If $u_0, v_0 \in \mathcal{S}(\R)$ are not compactly supported, a more careful statement is required because the columns of $X$ and $Y$ are then, in general, not defined in neighborhoods of $k = 0$.

\begin{proposition}[Asymptotics of $X$ and $Y$ as $k \to 0$]\label{XYat1prop}
Suppose $u_0, v_0 \in \mathcal{S}(\R)$ and let $p \geq 0$ be an integer. 
Then there are $3 \times 3$-matrix valued functions $C_i^{(l)}(x)$, $i = 1, 2$, $l = -2,-1, \dots, p$, with the following properties:
\begin{itemize}
\item For $x \in \R$ and $k \in (\omega^2 \bar{\mathrm{S}}, \omega \bar{\mathrm{S}}, \bar{\mathrm{S}})$, the function $X$ satisfies
\begin{subequations}\label{XYat1}
\begin{align}\label{Xat0}
& \bigg|\frac{\partial^j}{\partial k^j}\big(X - I - \sum_{l=-2}^p C_1^{(l)}(x)k^l\big) \bigg| \leq
f_+(x)|k|^{p+1-j}, \qquad |k| \leq \frac{1}{2},
\end{align}
while, for $x \in \R$ and $k \in (-\omega^2 \bar{\mathrm{S}}, -\omega \bar{\mathrm{S}}, -\bar{\mathrm{S}})$, the function $Y$ satisfies
\begin{align}\label{Yat0}
& \bigg|\frac{\partial^j}{\partial k^j}\big(Y - I - \sum_{l=-2}^p C_2^{(l)}(x) k^l\big) \bigg| \leq
f_-(x)|k|^{p+1-j}, \qquad |k| \leq \frac{1}{2},
\end{align}
\end{subequations}
where $f_+(x)$ and $f_-(x)$ are smooth positive functions of $x \in \R$ with rapid decay as $x \to +\infty$ and $x \to -\infty$, respectively, and $j \geq 0$ is any integer.

\item For each $l \geq -2$, $C_1^{(l)}(x)$ and $C_2^{(l)}(x)$ are smooth functions of $x \in \R$ which have rapid decay as $x \to +\infty$ and $x \to -\infty$, respectively.

\item The leading coefficients have the form
\begin{align}
 C_{i}^{(-2)}(x)
= &\;
\alpha_{i}(x)\begin{pmatrix}
\omega &  \omega^{2} & 1  \\
\omega &  \omega^{2} & 1  \\
\omega &  \omega^{2} & 1  
\end{pmatrix}, \label{Cjminus2} \\
 C_{i}^{(-1)}(x) = &\; \beta_{i}(x) \begin{pmatrix}
\omega^{2} & \omega & 1 \\
\omega^{2} & \omega & 1 \\
\omega^{2} & \omega & 1 
\end{pmatrix} + \gamma_{i}(x) \begin{pmatrix}
\omega^{2} & 1 & \omega \\
1 & \omega & \omega^{2} \\
\omega & \omega^{2} & 1 
\end{pmatrix} \label{Cjminus1}
	\\
C_{i}^{(0)}(x) = & -I + \delta_{i,1}(x)\begin{pmatrix}
1 & 1 & 1 \\
1 & 1 & 1 \\
1 & 1 & 1
\end{pmatrix} \nonumber \\
& + \delta_{i,2}(x) \begin{pmatrix}
1 & \omega^{2} & \omega \\
\omega & 1 & \omega^{2} \\
\omega^{2} & \omega & 1
\end{pmatrix} + \delta_{i,3}(x) \begin{pmatrix}
1 & \omega & \omega^{2} \\
\omega^{2} & 1 & \omega \\
\omega & \omega^{2} & 1
\end{pmatrix} \label{Cjminus0}
\end{align}
where $\alpha_i(x)$, $\beta_i(x)$, $\gamma_i(x)$, $\delta_{i,j}(x)$, $i = 1,2$, $j=1,2,3$, are  real-valued functions of $x \in \R$. Furthermore, there exists two bounded functions $f_{1}(x)$ and $f_{2}(x)$, with rapid decay at $+\infty$ and $-\infty$ respectively, such that
\begin{align}
& |\alpha_{i}(x)| \leq f_{i}(x), \label{bound alphai}
	\\
& |\beta_{i}(x)| + |\gamma_{i}(x)| \leq (1+|x|)f_{i}(x), \label{bound betai}
	\\
& |\delta_{i,j}(x)-\tfrac{1}{3}| \leq (1+|x|)^{2}f_{i}(x),\label{bound deltai}
\end{align}
for all $x \in \mathbb{R}$, $i=1,2$ and $j=1,2,3$.
\end{itemize}
\end{proposition}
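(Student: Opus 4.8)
The plan is to trace the singularity at $k=0$ to its purely algebraic source. By \eqref{X hat to X tilde} (with $t=0$, where $\hat{X}=Xe^{\mathcal{L}x}$) we have $X = P^{-1}\tilde{X}e^{-\mathcal{L}x}$, and the crucial structural fact is that $\tilde{L}(x,k)$ is a \emph{polynomial} in $k$. Writing $\tilde{L}=\Lambda(k)+N(x)$ with $N(x)=\begin{pmatrix}0&0&0\\0&0&0\\-v_0-u_{0x}&-2u_0&0\end{pmatrix}$ and $\Lambda(k)$ the potential-free part, the free propagator $e^{(x-x')\Lambda(k)}$ is entire in $k$ (its eigenvalues $l_j=\omega^jk$ merely coalesce at $k=0$, without branching). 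Hence all of the $k=0$ singularity of $X$ must come from $P^{-1}$: since $P(k)=\diag(1,k,k^2)P_0$ with $P_0=\begin{pmatrix}\omega&\omega^2&1\\\omega^2&\omega&1\\1&1&1\end{pmatrix}$ constant and invertible (so $P_0^{-1}=\tfrac13P_0^\ast$), we get $P^{-1}=P_0^{-1}\diag(1,k^{-1},k^{-2})$, which is exactly a double pole, while $e^{-\mathcal{L}x}=e^{-kJx}$ is entire.

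First I would prove that $\tilde{X}$ is analytic at $k=0$. One writes the Volterra equation $\tilde{X}(x,k)=e^{x\Lambda(k)}P(k)-\int_x^\infty e^{(x-x')\Lambda(k)}N(x')\tilde{X}(x',k)\,dx'$ and iterates exactly as in Proposition \ref{XYprop}, column by column in the rotated sectors so that the relevant exponentials stay bounded. Since $\Lambda$ is entire and $N\in\mathcal{S}(\R)$, every Neumann iterate is entire in $k$ and the series converges locally uniformly up to and including $k=0$; thus each column of $\tilde{X}(x,\cdot)$ is analytic with a convergent expansion $\tilde{X}=\sum_{l\ge0}\tilde{X}^{[l]}(x)k^l$, the coefficients $\tilde{X}^{[l]}$ smooth in $x$. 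Crucially, at $k=0$ the propagator $e^{(x-x')\Lambda(0)}$ grows only polynomially in $x-x'$ (as $\Lambda(0)$ is nilpotent), so differentiating the series $l$ times in $k$ produces at most $l$ additional powers of $x$; this gives bounds $|\tilde{X}^{[l]}(x)|\le(1+|x|)^lf_+(x)$ with $f_+$ rapidly decaying as $x\to+\infty$.

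The expansion of $X$ can then be read off. Substituting $\tilde{X}=\sum_l\tilde{X}^{[l]}k^l$ and the entire expansion of $e^{-kJx}$ into $X=P_0^{-1}\diag(1,k^{-1},k^{-2})\,\tilde{X}\,e^{-kJx}$, the diagonal factor multiplies row $a$ of $\tilde{X}$ by $k^{-(a-1)}$, so the lowest power produced is $k^{-2}$ (third row), giving a pole of order at most two and the expansion \eqref{XYat1}; the remainder bound $f_+(x)|k|^{p+1-j}$ follows from the Neumann-tail and Cauchy estimates for $\tilde{X}$, and the $(1+|x|)$ and $(1+|x|)^2$ factors in \eqref{bound betai}--\eqref{bound deltai} from the polynomial growth just described. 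Integration from $-\infty$ yields the same statements for $Y$. To obtain the explicit forms \eqref{Cjminus2}--\eqref{Cjminus0} I would use the symmetries rather than compute entrywise: the $\mathbb{Z}_3$-symmetry \eqref{XYsymm}, $X(x,k)=\mathcal{A}X(x,\omega k)\mathcal{A}^{-1}$, forces $\mathcal{A}C_i^{(l)}\mathcal{A}^{-1}=\omega^{-l}C_i^{(l)}$, which cuts the admissible matrices down to precisely the one-, two-, and three-parameter families displayed there, while the $\mathbb{Z}_2$-symmetry $X(x,k)=\mathcal{B}\overline{X(x,\bar{k})}\mathcal{B}$ gives $C_i^{(l)}=\mathcal{B}\overline{C_i^{(l)}}\mathcal{B}$ and hence the reality of $\alpha_i,\beta_i,\gamma_i,\delta_{i,j}$; the normalization as $x\to\pm\infty$, where $N\to0$ and the $P_0^{-1}(\cdot)P_0$-conjugate of the free solution contributes the $\tfrac13$-structure of $P_0^{-1}$, forces $\delta_{i,j}\to\tfrac13$ (consistent with $C_i^{(0)}+I\to0$).

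The main obstacle is the second step: establishing analyticity of $\tilde{X}$ at $k=0$ together with the sharp polynomial-in-$x$ control of its Taylor coefficients. This is exactly the point where the apparent $|k|^{-2}$ blow-up of $\mathsf{U}=P^{-1}NP$ must be recognized as an artifact of the degenerating change of variables $P$ (with $\det P\sim k^3$) rather than a genuine feature of the dynamics governed by the entire matrix $\tilde{L}$. Once $\tilde{X}$ is under control, the pole structure, the remainder estimates, and---via the symmetries---the explicit coefficient forms are essentially bookkeeping.
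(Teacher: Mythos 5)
Your skeleton is essentially the paper's: the paper works with $\mathcal{X}:=PX$ (which differs from your $\tilde{X}=PXe^{\mathcal{L}x}$ only by an entire factor), exploits that the conjugated kernel $e^{(x-x')\Lambda(k)}=P(k)e^{(x-x')\mathcal{L}(k)}P(k)^{-1}$ is analytic at $k=0$, and recovers the double pole of $X=P^{-1}\mathcal{X}$ from the expansion \eqref{Pinvat1} of $P^{-1}$. The genuine gap is in your growth estimates. You claim $|\tilde{X}^{[l]}(x)|\le(1+|x|)^l f_+(x)$ with $f_+$ rapidly decaying at $+\infty$, arguing that each $k$-derivative costs one power of $x$. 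This is false on both ends. First, $\tilde{X}$ contains the free part $e^{x\Lambda(k)}P(k)$, whose Taylor coefficients already grow polynomially in $x$ in both directions (e.g. $\partial_k\big[e^{x\Lambda(k)}P(k)\big]\big|_{k=0}=e^{x\Lambda(0)}P'(0)$ has entries of size $|x|$), so no coefficient of $\tilde{X}$ itself decays at $+\infty$; rapid decay can only be asserted for the integral part, i.e.\ for the coefficients of $X-I$. Second, and more seriously, polynomial growth is present already at $l=0$: the kernel $e^{(x-x')\Lambda(0)}$ is unipotent with a $(x-x')^2/2$ entry, and since your $N(x')$ has only its third row nonzero, the first row of $\tilde{X}^{[0]}(x)=\mathcal{X}(x,0)$ grows like $x^{2}$ as $x\to-\infty$ --- this is exactly \eqref{boundedness of mathsfX at k=0} in the paper --- not $O(1)$ as your bound asserts. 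With the true uniform (row-independent) bound $(1+|x|)^{l+2}$, your bookkeeping only yields $|\alpha_i|\le(1+|x|)^2f_i$, $|\beta_i|+|\gamma_i|\le(1+|x|)^3f_i$, $|\delta_{i,j}-\tfrac13|\le(1+|x|)^4f_i$, strictly weaker than \eqref{bound alphai}--\eqref{bound deltai}. What your argument needs, and what the paper proves, is the row-resolved estimate: row $a$ of $\partial_k^{j}\mathcal{X}(x,0)$ is $O(|x|^{(3-a)+j})$ as $x\to-\infty$ (see \eqref{boundedness of mathsfX at k=0} and \eqref{boundedness of derivative of mathsfX at k=0}); the sharp bounds then follow because the singular factors $k^{-2},k^{-1}$ in $P^{-1}$ multiply precisely the slowly growing (third and second) rows.

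A secondary overstatement: the $\mathbb{Z}_3$-symmetry does not by itself cut $C_i^{(-2)}$ and $C_i^{(-1)}$ down to the one- and two-parameter families \eqref{Cjminus2}--\eqref{Cjminus1}. The eigenspaces of $C\mapsto\mathcal{A}C\mathcal{A}^{-1}$ are each three-dimensional, so for $l=-2$ symmetry plus the $\mathcal{B}$-reality constraint still leaves a three-real-parameter family. One must also use the equal-rows (rank-one) structure inherited from the Laurent coefficients of $P^{-1}$; your factorization $P^{-1}=P_0^{-1}\diag(1,k^{-1},k^{-2})$ does supply it, since the $k^{-2}$ coefficient of $X$ is the outer product of the third column of $P_0^{-1}$ with the third row of $\tilde{X}^{[0]}$, and this is exactly how the paper pins down the displayed forms before invoking $\mathcal{A}$ and $\mathcal{B}$.
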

\begin{proof}
The function $\mathcal{X} := PX$ satisfies the integral equation
\begin{align}\nonumber
& \mathcal{X}(x,k) = P(k) - \int_x^\infty P(k)e^{(x - x')\widehat{\mathcal{L}(k)}}(P(k)^{-1} \tilde{\mathsf{U}}(x')\mathcal{X}(x',k)) dx', 
	\\ \label{tildeXeq}
& \hspace{6cm} x \in \R, \ k \in (\omega^2 \bar{\mathrm{S}}, \omega \bar{\mathrm{S}}, \bar{\mathrm{S}}) \setminus \{0\},
\end{align}
where 
\begin{equation}\label{def of mathsfU tilde}
\tilde{\mathsf{U}} := P\mathsf{U}P^{-1} = \begin{pmatrix}
0 & 0 & 0 \\ 
0 & 0 & 0 \\
-u_{0x}-v_{0} & -2u_{0} & 0
\end{pmatrix}
\end{equation}
is independent of $k$.
A computation shows that the kernel 
\begin{align*}
\mathcal{P}(x,x',k) := P(k)e^{(x - x')\mathcal{L}(k)}P(k)^{-1}
\end{align*}
is analytic at $k=0$. Moreover, the function $P(k)$ is analytic for $k \in \C$.
Thus, an analysis of the Volterra equation (\ref{tildeXeq}) as in Proposition \ref{XYprop} shows that $\mathcal{X}(x,k)$ is analytic for $k \in (\omega^2 \mathrm{S}, \omega \mathrm{S}, \mathrm{S})$ and that $\mathcal{X}$ and all its $k$-derivatives have continuous extensions to $(\omega^2 \bar{\mathrm{S}}, \omega \bar{\mathrm{S}}, \bar{\mathrm{S}})$.
In particular, $\mathcal{X}$ admits the Taylor expansion
\begin{align*}
& \mathcal{X}(x,k) = \mathcal{X}(x,0) + \partial_k \mathcal{X}(x,0)k + \frac{1}{2}\partial_k^2\mathcal{X}(x,0)k^2 + \cdots, 
	\\
& \hspace{8cm} k \to 0, \ k \in (\omega^2 \bar{\mathrm{S}}, \omega \bar{\mathrm{S}}, \bar{\mathrm{S}}),
\end{align*}
where the coefficients are smooth functions of $x\in \R$. The derivative $\partial_k^j\mathcal{X}(x,k)$ converges rapidly to $\partial_k^jP(k)$ as $x \to +\infty$ for each $j \geq 0$ and $k \in (\omega^2 \bar{\mathrm{S}}, \omega \bar{\mathrm{S}}, \bar{\mathrm{S}})$. Let us analyze the behavior of $\partial_k^j\mathcal{X}(x,0)$, $j=0,1,2$, as $x \to - \infty$. A simple computation shows that
\begin{align*}
\mathcal{P}(x,x',0) = \begin{pmatrix}
1 & x-x' & \frac{(x-x')^{2}}{2} \\
0 & 1 & x-x' \\
0 & 0 & 1
\end{pmatrix},
\end{align*}
and thus, using \eqref{def of mathsfU tilde}, 
\begin{align*}
\mathcal{P}(x,x',0) \tilde{\mathsf{U}}(x') = \begin{pmatrix}
- \frac{1}{2}(u_{0x}(x')+v_{0}(x'))(x-x')^{2} & -u_{0}(x')(x-x')^{2} & 0 \\
- (u_{0x}(x')+v_{0}(x'))(x-x') & -2u_{0}(x')(x-x') & 0 \\
- (u_{0x}(x')+v_{0}(x')) & -2u_{0}(x') & 0 
\end{pmatrix},
\end{align*}
from which we deduce (from a standard analysis of the associated Volterra equation) that
\begin{align}\label{boundedness of mathsfX at k=0}
\mathcal{X}(x,0) = \begin{pmatrix}
O(x^{2}) & O(x^{2}) & O(x^{2}) \\
O(x) & O(x) & O(x) \\
O(1) & O(1) & O(1)
\end{pmatrix}, \qquad \mbox{as } x \to -\infty.
\end{align}
There are similar estimates for the rows of $\partial_{k}\mathcal{X}(x,0)$ and $\partial_{k}^{2}\mathcal{X}(x,0)$. First, note that
\begin{align*}
\partial_{k}\mathcal{P}(x,x',0) = \partial_{k}^{2}\mathcal{P}(x,x',0) = \begin{pmatrix}
0 & 0 & 0 \\
0 & 0 & 0 \\
0 & 0 & 0
\end{pmatrix},
\end{align*}
which implies (from an analysis of the associated Volterra equations) that
\begin{align}\label{boundedness of derivative of mathsfX at k=0}
\partial_{k}\mathcal{X}(x,0) = \begin{pmatrix}
O(x^{3}) & O(x^{3}) & O(x^{3}) \\
O(x^{2}) & O(x^{2}) & O(x^{2}) \\
O(x) & O(x) & O(x)
\end{pmatrix}, \quad \partial_{k}^{2}\mathcal{X}(x,0) = \begin{pmatrix}
O(x^{4}) & O(x^{4}) & O(x^{4}) \\
O(x^{3}) & O(x^{3}) & O(x^{3}) \\
O(x^{2}) & O(x^{2}) & O(x^{2})
\end{pmatrix},
\end{align}
as $x \to - \infty$.
On the other hand, $P(k)^{-1}$ has a double pole at $k = 0$:
\begin{align}\label{Pinvat1}
& P(k)^{-1} = 
\frac{P^{(-2)}}{k^{2}}+\frac{P^{(-1)}}{k}
+ P^{(0)},
\end{align}
where
$$P^{(-2)} =  \frac{1}{3}\begin{pmatrix}  
0 & 0 & 1 \\
0 & 0 & 1 \\
0 & 0 & 1
\end{pmatrix}, \qquad P^{(-1)} = \frac{1}{3}\begin{pmatrix}  
0 & \omega & 0 \\
0 & \omega^{2} & 0 \\
0 & 1 & 0
\end{pmatrix}, \qquad P^{(0)} = \frac{1}{3}\begin{pmatrix}  
\omega^{2} & 0 & 0 \\
\omega & 0 & 0 \\
1 & 0 & 0
\end{pmatrix}.$$
It follows that $X = P^{-1} \mathcal{X}$ has at most a double pole at $k = 0$ and that $X$ admits an expansion of the form
\begin{align*}
& X(x,k) =  \frac{C_1^{(-2)}(x)}{k^{2}}+\frac{C_1^{(-1)}(x)}{k} + I + C_1^{(0)}(x) + C_1^{(1)}(x)k + \cdots, \quad k \in (\omega^2 \bar{\mathrm{S}}, \omega \bar{\mathrm{S}}, \bar{\mathrm{S}}),
\end{align*}
as $k \to 0$ with 
\begin{align*}
C_{1}^{(-2)}(x)  = P^{(-2)} \mathcal{X}(x,0), \quad 
C_{1}^{(-1)}(x)  = P^{(-2)} \partial_k\mathcal{X}(x,0) + P^{(-1)}\mathcal{X}(x,0), \\
C_{1}^{(0)}(x) = \frac{1}{2}P^{(-2)}\partial_{k}^{2}\mathcal{X}(x,0) + P^{(-1)}\partial_{k} \mathcal{X}(x,0) + P^{(0)}\mathcal{X}(x,0)-I, \quad \text{etc}. 
\end{align*}
Using that $\partial_k^j\mathcal{X}(x,k) \to \partial_k^jP(k)$ rapidly as $x \to +\infty$ and equating powers of $k$ in the identity
$$I = P^{-1} P = \Big(\frac{P^{(-2)}}{k^{2}}+\frac{P^{(-1)}}{k}
+ P^{(0)}\Big)(P(0) + P'(0)k + \frac{1}{2}P''(0)k^{2}),$$
we find that the coefficients $C_1^{(l)}(x)$, $l \geq -2$, vanish rapidly as $x \to +\infty$.
From the exact form of $P^{(-2)}$, it follows that there exist complex-valued functions $a_1(x)$, $a_2(x)$, $a_3(x)$ such that 
$$C_{1}^{(-2)}(x) = \begin{pmatrix}
a_1(x) &  a_2(x) & a_3(x)  \\
a_1(x) &  a_2(x) & a_3(x)  \\
a_1(x) &  a_2(x) & a_3(x)  
\end{pmatrix}.$$
The symmetry $X(x,k) = \mathcal{A}X(x,\omega k)\mathcal{A}^{-1}$ implies $a_{1} = \omega a_{3}$ and $a_{2} = \omega^{2} a_{3}$, and the symmetry $X(x, k) =  \mathcal{B} \overline{X(x,\overline{k})}\mathcal{B}$ implies that $a_{3} = \bar{a}_3$. Since the third row of $\mathcal{X}(x,0)$ is bounded for all $x \in \mathbb{R}$ by \eqref{boundedness of mathsfX at k=0}, so is $C_{1}^{(-2)}(x)$. This completes the proof of \eqref{Cjminus2} with $i=1$ and $\alpha_{1} = a_{3}$. Similarly, the expressions for $P^{(-2)}$ and $P^{(-1)}$ imply that there exist complex-valued functions $\{b_{i}\}_{i=1}^{6}$ such that
\begin{align*}
C_{1}^{(-1)}(x) & = P^{(-2)} \partial_k\mathcal{X}(x,0) + P^{(-1)}\mathcal{X}(x,0) \\
& = \begin{pmatrix}
b_1(x) &  b_2(x) & b_3(x)  \\
b_1(x) &  b_2(x) & b_3(x)  \\
b_1(x) &  b_2(x) & b_3(x)
\end{pmatrix} + 
\begin{pmatrix}
\omega b_{4}(x) & \omega b_{5}(x) & \omega b_{6}(x) \\
\omega^{2} b_{4}(x) & \omega^{2} b_{5}(x) & \omega^{2} b_{6}(x) \\
 b_{4}(x) & b_{5}(x) & b_{6}(x)
\end{pmatrix}.
\end{align*}
Furthermore, since the second row of $\mathcal{X}(x,0)$ and the third row of $\partial_{k}\mathcal{X}(x,0)$ vanish rapidly as $x \to + \infty$, and grow at most linearly as $x \to -\infty$ by \eqref{boundedness of mathsfX at k=0} and \eqref{boundedness of derivative of mathsfX at k=0}, we conclude that there exists a bounded positive function $f_{1}(x)$ with rapid decay as $x \to + \infty$ such that $b_{i}(x) \leq f_{1}(x)(1+|x|)$ for all $x \in \mathbb{R}$. The symmetry $X(x,k) = \mathcal{A}X(x,\omega k)\mathcal{A}^{-1}$ implies
$$
b_{1} = \omega^{2}b_{3}, \quad b_{2} = \omega b_{3}, \quad b_{4} = \omega b_{6}, \quad b_{5} = \omega^{2}b_{6},
$$
and the symmetry $X(x, k) =  \mathcal{B} \overline{X(x,\overline{k})}\mathcal{B}$ implies $b_{3} = \bar{b}_{3}$ and $b_{6} = \bar{b}_{6}$. This completes the proof of \eqref{Cjminus1} with $i=1$, $\beta_{1} = b_{3}$ and $\gamma_{1} = b_{6}$. The proofs for $C_{1}^{(0)}$ and for $Y$ are analogous.
\end{proof}

\begin{remark}
If $u_0$ and $v_0$ have compact support, then all entries of $X$ and  $Y$ are defined and analytic in punctured neighborhoods of $k = 0$ and the content of (\ref{XYat1}) can be expressed more simply as the Laurent series identities
\begin{align}
\begin{cases}
 X(x,k) = I + \sum_{l=-2}^\infty C_1^{(l)}(x)k^l, \qquad \text{$k$ near $0$},
	\\ 
 Y(x,k) = I + \sum_{l=-2}^\infty C_2^{(l)}(x)k^l, \qquad \text{$k$ near $0$}.
 \end{cases}
\end{align}
\end{remark}

\subsection{The spectral function $s(k)$}
The spectral function $s(k)$ is defined by
\begin{equation}\label{sdef}
s(k) = I - \int_{\mathbb{R}}e^{-x \widehat{\mathcal{L}(k)}}(\mathsf{U}X)(x,k)dx.
\end{equation}
Let $\R_+ = (0,+\infty)$ and $\R_- = (-\infty, 0)$.

\begin{proposition}[Properties of $s(k)$]\label{sprop}
Suppose $u_0,v_0 \in \mathcal{S}(\R)$. 
Then the spectral function $s(k)$ defined in (\ref{sdef}) has the following properties:
\begin{enumerate}[$(a)$]
\item The entries of $s(k)$ are defined and continuous for $k$ in
\begin{align}\label{sdomainofdefinition}
 \begin{pmatrix}
 \omega^2 \bar{\mathrm{S}} & \R_+ & \omega \R_+ \\
 \R_+ & \omega \bar{\mathrm{S}} & \omega^2 \R_+ \\
 \omega \R_+ & \omega^2 \R_+ & \bar{\mathrm{S}}
 \end{pmatrix}\setminus \{0\},
\end{align}
that is, the $(11)$ entry of $s(k)$ is defined and continuous for $k \in \omega^2 \bar{\mathrm{S}}\setminus \{0\}$, etc. 
 
\item The diagonal entries of $s(k)$  are analytic in the interior of their domains of definition as given in (\ref{sdomainofdefinition}). 
 
\item For $j = 1, 2, \dots$, the derivative $\partial_k^js(k)$ is well-defined and continuous for $k$ in (\ref{sdomainofdefinition}).

\item $s(k)$ obeys the symmetries
\begin{align}\nonumber
&  s(k) = \mathcal{A} s(\omega k)\mathcal{A}^{-1} = \mathcal{B} \overline{s(\overline{k})}\mathcal{B}.
\end{align}

\item $s(k)$ approaches the identity matrix as $k \to \infty$. More precisely, there are diagonal matrices $\{s_j\}_1^\infty$ such that
\begin{align*}\nonumber
\Big|\partial_k^j \Big(s(k) - I - \sum_{j=1}^N \frac{s_j}{k^j}\Big)\Big| = O(k^{-N-1}), \quad& k \to \infty,
\ k \in \begin{pmatrix}
 \omega^2 \bar{\mathrm{S}} & \R_+ & \omega \R_+ \\
 \R_+ & \omega \bar{\mathrm{S}} & \omega^2 \R_+ \\
 \omega \R_+ & \omega^2 \R_+ & \bar{\mathrm{S}}
 \end{pmatrix},
\end{align*}
for $j = 0, 1, \dots, N$ and each integer $N \geq 1$. In particular, the off-diagonal entries of $s(k)$ have rapid decay as $k \to \infty$.

\item As $k \to 0$, 
\begin{align}\label{s at 0}
s(k) = \frac{s^{(-2)}}{k^{2}} + \frac{s^{(-1)}}{k} + s^{(0)} + s^{(1)}k + \ldots, \quad k \in \begin{pmatrix}
 \omega^2 \bar{\mathrm{S}} & \R_+ & \omega \R_+ \\
 \R_+ & \omega \bar{\mathrm{S}} & \omega^2 \R_+ \\
 \omega \R_+ & \omega^2 \R_+ & \bar{\mathrm{S}}
 \end{pmatrix},
\end{align}
where
\begin{align}\label{spm2p}
s^{(-2)} =  \mathfrak{s}^{(-2)} \begin{pmatrix}
\omega & \omega^{2} & 1 \\
\omega & \omega^{2} & 1 \\
\omega & \omega^{2} & 1 
\end{pmatrix}, \quad \mathfrak{s}^{(-2)} := \int_{\mathbb{R}} \big( 2 u_{0} \gamma_{1} + (u_{0x}+v_{0})\delta_{1,3} \big) dx \in \mathbb{R},
\end{align}
and the expansion can be differentiated termwise any number of times.
\item If $u_0(x), v_0(x)$ have compact support, then $s(k)$ is defined and analytic for $k \in \C \setminus \{0\}$, $\det s = 1$ for $k \in \C \setminus \{0\}$, and
\begin{align}\label{XYs} 
X(x,k) = Y(x,k)e^{x\widehat{\mathcal{L}(k)}} s(k), \qquad k \in \C  \setminus \{0\}.
\end{align}
\end{enumerate}
\end{proposition}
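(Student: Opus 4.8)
The plan is to work entrywise. Writing out the operator $e^{-x\widehat{\mathcal{L}(k)}}$, the $(i,j)$ entry of the defining formula (\ref{sdef}) reads
\begin{align*}
s_{ij}(k) = \delta_{ij} - \int_{\R} e^{-x(l_i(k) - l_j(k))}(\mathsf{U}X)_{ij}(x,k)\, dx.
\end{align*}
Since $u_0,v_0\in\mathcal{S}(\R)$, the matrix $\mathsf{U}(\cdot,k)$ is Schwartz in $x$, and by Proposition \ref{XYprop} the $j$-th column of $X(\cdot,k)$ is bounded on $\R$ (with rapid approach to the identity as $x\to+\infty$) whenever $k$ lies in the sector on which that column is defined; hence $(\mathsf{U}X)_{ij}(\cdot,k)$ is Schwartz in $x$ there. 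For the diagonal entries $l_i-l_j=0$, so the integral converges on the whole sector of definition of column $i$, giving part (a) for the diagonal entries. For $i\neq j$ the exponential $e^{-x(l_i-l_j)}$ must be bounded as $x\to\pm\infty$ for convergence, which forces $\re(l_i-l_j)=0$; since $l_i-l_j=(\omega^i-\omega^j)k$, the intersection of this line with the sector on which column $j$ is defined is exactly the ray recorded in (\ref{sdomainofdefinition}) (for instance $l_1-l_3=(\omega-1)k$ has vanishing real part precisely on $\omega\R$, whose intersection with $\bar{\mathcal S}$ is $\omega\R_+$). Continuity in $k$, and the continuity of the $k$-derivatives in (c), then follow from dominated convergence using the bounds on $\partial_k^j X$ from Proposition \ref{XYprop}(e)--(f), the factors of $x$ produced by differentiating the exponential being absorbed by the Schwartz decay of $\mathsf{U}$. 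The analyticity of the diagonal entries in (b) follows by differentiating under the integral sign on the interior of each sector, where the corresponding column of $X$ is analytic.

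For the symmetries (d) I would insert the symmetries $\mathsf{U}(x,k)=\mathcal{A}\mathsf{U}(x,\omega k)\mathcal{A}^{-1}=\mathcal{B}\overline{\mathsf{U}(x,\bar k)}\mathcal{B}$ (which follow from (\ref{Acaldef})--(\ref{Bcaldef}) since $\mathsf{U}=L-\mathcal{L}$) and $X(x,k)=\mathcal{A}X(x,\omega k)\mathcal{A}^{-1}=\mathcal{B}\overline{X(x,\bar k)}\mathcal{B}$ (Proposition \ref{XYprop}(h)) into the integral, together with the identities $\mathcal{A}\mathcal{L}(\omega k)\mathcal{A}^{-1}=\mathcal{L}(k)$ and $\mathcal{B}\overline{\mathcal{L}(\bar k)}\mathcal{B}=\mathcal{L}(k)$, which imply $\mathcal{A}\,e^{-x\widehat{\mathcal{L}(\omega k)}}[A]\,\mathcal{A}^{-1}=e^{-x\widehat{\mathcal{L}(k)}}[\mathcal{A}A\mathcal{A}^{-1}]$ and similarly for $\mathcal{B}$; with these substitutions $\mathcal{A}s(\omega k)\mathcal{A}^{-1}$ and $\mathcal{B}\overline{s(\bar k)}\mathcal{B}$ reduce directly to the integral defining $s(k)$. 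For the behavior at $k\to\infty$ in (e) I would treat the two types of entries separately. On the ray where an off-diagonal entry is defined $l_i-l_j$ is purely imaginary, so $s_{ij}$ is, up to a constant, the Fourier transform of the Schwartz-in-$x$ function $(\mathsf{U}X)_{ij}$ evaluated at a frequency proportional to $k$; repeated integration by parts gives rapid decay, and likewise after differentiating in $k$. For the diagonal entries I would substitute the asymptotic expansion of $X$ from Proposition \ref{XYprop2} into $\int_\R(\mathsf{U}X)_{ii}\,dx$; since the off-diagonal contributions are beyond all orders, the resulting expansion $s=I+\sum_j s_j k^{-j}$ has diagonal coefficients $s_j$.

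The expansion at $k\to 0$ in (f) is the main obstacle and requires the most care. Here I would substitute $\mathsf{U}=\mathsf{U}_1/k+\mathsf{U}_2/k^2$ from (\ref{UVexpressions}), the Taylor expansion of $e^{-x(l_i-l_j)}$, and the expansion $X=\sum_{l\ge -2}C_1^{(l)}k^l$ from Proposition \ref{XYat1prop} into the integrand. A priori $\mathsf{U}X$ has a pole of order four, but the explicit forms (\ref{Cjminus2})--(\ref{Cjminus0}) of the coefficients, combined with the algebraic identities forced by $1+\omega+\omega^2=0$, make the $k^{-4}$ and $k^{-3}$ coefficients of the integrand vanish identically: writing $W$ for the rank-one matrix with every row equal to $(\omega,\omega^2,1)$ in (\ref{Cjminus2}) and $V$ for the one in (\ref{UVexpressions})--(\ref{Cjminus1}) with rows $(\omega^2,\omega,1)$, one checks $W^2=V^2=VW=WV=0$ and $WG=WH_1=WH_2=0$, while $VG=3W$ and $WH_3=3W$, where $G,H_1,H_2,H_3$ denote the remaining matrices in (\ref{Cjminus1})--(\ref{Cjminus0}). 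This leaves a genuine double pole, and collecting the $k^{-2}$ coefficient yields
\begin{align*}
s^{(-2)} = -\int_{\R}\big(\mathsf{U}_1 C_1^{(-1)} + \mathsf{U}_2(I + C_1^{(0)})\big)\,dx = \Big(\int_{\R}\big(2u_0\gamma_1 + (u_{0x}+v_0)\delta_{1,3}\big)\,dx\Big) W,
\end{align*}
which is (\ref{spm2p}); the termwise differentiability of the expansion follows from the uniform estimates (\ref{XYat1}).

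Finally, for (g) I would use that when $u_0,v_0$ have compact support both $\hat X:=Xe^{x\mathcal{L}}$ and $\hat Y:=Ye^{x\mathcal{L}}$ are fundamental matrix solutions of $\Psi_x=L\Psi$ with unit determinant (Proposition \ref{XYprop}(i) together with $\tr\mathcal{L}=0$), so they are related by an $x$-independent matrix, $\hat X=\hat Y s(k)$ with $\det s=1$, that is $X=Ye^{x\widehat{\mathcal{L}(k)}}s$, which is (\ref{XYs}). To identify this $s$ with the integral (\ref{sdef}), I would apply $e^{-x\widehat{\mathcal{L}(k)}}$ to (\ref{XYdefa}), obtaining $e^{-x\widehat{\mathcal{L}(k)}}X = I-\int_x^\infty e^{-x'\widehat{\mathcal{L}(k)}}(\mathsf{U}X)\,dx'$, and let $x\to-\infty$, where $Y\to I$: the left-hand side tends to $s$ while the right-hand side tends to the integral in (\ref{sdef}). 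Analyticity of $s$ on $\C\setminus\{0\}$ is then inherited from that of $X$ and $Y^{-1}$.
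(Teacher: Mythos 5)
Your proposal is correct, and for parts (a)--(e) and (g) it follows essentially the paper's own argument: boundedness of $e^{-x(l_i-l_j)}$ on the rays and sectors of (\ref{sdomainofdefinition}) combined with Proposition \ref{XYprop} for (a)--(d), replacement of $X$ by $X_{(p)}$ followed by integration by parts (off-diagonal) and expansion (diagonal) for (e), and the fundamental-solution/limit-at-$x\to-\infty$ argument for (g). The genuine difference is in part (f). The paper rewrites $s(k)$ as (\ref{s in terms of U tilde}), i.e.\ in terms of the $k$-independent potential $\tilde{\mathsf{U}}=P\mathsf{U}P^{-1}$ and the function $\mathcal{X}=PX$, which is analytic at $k=0$; the entire singularity then sits in the explicit partial fractions (\ref{Pinvat1}) of $P(k)^{-1}$, so the at-most-double-pole structure of $s$ is automatic, no cancellation needs to be checked, and (\ref{spm2p}) drops out of $s^{(-2)}=-\int_{\R}P^{(-2)}\tilde{\mathsf{U}}(x)\mathcal{X}(x,0)\,dx$ together with (\ref{mathsfX at 0}). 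You instead multiply the expansion of $X$ from Proposition \ref{XYat1prop} directly by $\mathsf{U}=\mathsf{U}_1/k+\mathsf{U}_2/k^2$, which a priori creates a fourth-order pole, and you then remove the $k^{-4}$ and $k^{-3}$ terms by the rank-one identities $W^2=V^2=VW=WV=0$, $WG=WH_1=WH_2=0$, $VG=WH_3=3W$; I checked all of these, and your resulting formula $s^{(-2)}=-\int_{\R}\big(\mathsf{U}_1C_1^{(-1)}+\mathsf{U}_2(I+C_1^{(0)})\big)dx=\mathfrak{s}^{(-2)}W$ agrees exactly with (\ref{spm2p}). In substance your cancellations are the entrywise content of the relations $P(0)P^{(-2)}=0$, $P(0)P^{(-1)}+P'(0)P^{(-2)}=0$ (equate powers of $k$ in $P(k)P(k)^{-1}=I$; everything reduces to $1+\omega+\omega^2=0$), which the paper's change of variables keeps hidden. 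The paper's route buys structural transparency and reuses the $\mathcal{X}$-machinery already built for the proof of Proposition \ref{XYat1prop}; yours is self-contained given only the \emph{statement} of Proposition \ref{XYat1prop}, at the cost of explicit matrix algebra, and the two treat the oscillatory factor on the off-diagonal rays equivalently (your termwise Taylor expansion of $e^{-x(l_i-l_j)}$ versus the paper's estimate (\ref{exl1l2est})). One small point worth making explicit in your write-up: the termwise integrals converge because the polynomial growth of $\gamma_1,\delta_{1,3}$ (and of the remainder bounds $f_+$) as $x\to-\infty$, allowed by (\ref{bound betai})--(\ref{bound deltai}), is absorbed by the Schwartz decay of $\mathsf{U}_1,\mathsf{U}_2$.
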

\begin{proof}
The definition of the $(ij)$th entry of $s(k)$ involves the exponential factor $e^{-x(l_i - l_j)}$. Properties $(a)$-$(d)$ follow by using the boundedness properties of this factor together with the properties of $X$ given in Proposition \ref{XYprop} in the definition (\ref{sdef}) of $s(k)$. 
To prove $(e)$, we let $p \geq 1$ be an integer and let $X_{(p)}(x,k) = \sum_{j=0}^{p} X_j(x) k^{-j}$, $X_0 \equiv I$, be the function in (\ref{Xpdef}). Using (\ref{Xasymptoticsa}), we can replace $X$ with $X_{(p)}$ in (\ref{sdef}) with an error of order $O(k^{-p-1})$. Thus, as $k \to \infty$ within the domain of definition (\ref{sdomainofdefinition}), we have
\begin{align}\label{skjXj}
& s(k) = I - \sum_{j=0}^p \frac{1}{k^j} \int_\R e^{-x\widehat{\mathcal{L}(k)}}\big(\mathsf{U}(x,k)X_j(x)\big)dx
+ O(k^{-p-1}).
 \end{align}
Since the functions $X_j(x)$ and their derivatives are bounded and $\mathsf{U}(\cdot,k) \in \mathcal{S}(\R)$, and since $l_{j}(k)^{-1} = O(k^{-1})$ as $k \to \infty$ for each $j=1,2,3$, integration by parts gives $(e)$ for the off-diagonal elements of $s(k)$ in the case $k \to \infty$. For the diagonal elements, the exponential factor is absent from the integral in (\ref{skjXj}), and we obtain (e) for $k \to \infty$ by substituting in \eqref{skjXj} the expression \eqref{expression for U} of $\mathsf{U}$ and expanding.

Note that we can rewrite $s(k)$ in terms of $\tilde{\mathsf{U}}$ and $\mathcal{X}$ as follows:
\begin{equation}\label{s in terms of U tilde}
s(k) = I - \int_{\mathbb{R}} e^{-x \widehat{\mathcal{L}(k)}} \big(P(k)^{-1} \tilde{\mathsf{U}}(x) \mathcal{X}(x,k)\big)dx.
\end{equation} 
Substituting the expansion of $\mathcal{X}$ as $k \to 0$ into the expression \eqref{s in terms of U tilde} for $s(k)$, we have
\begin{align}\label{sIintRe}
& s(k) = I - \int_{\mathbb{R}} e^{-x \widehat{\mathcal{L}(k)}}\bigg[\Big(\frac{P^{(-2)}}{k^{2}}+\frac{P^{(-1)}}{k}
+ P^{(0)}\Big)\tilde{\mathsf{U}}(x) \Big( \mathcal{X}(x,0)+\partial_{k}\mathcal{X}(x,0)k+\cdots\Big)\bigg]dx
\end{align}
which allows us to find the expansion \eqref{s at 0}. In particular, we have
\begin{align}\label{1112entries}
s^{(-2)} = - \int_\R P^{(-2)} \tilde{\mathsf{U}}(x)\mathcal{X}(x,0)dx.
\end{align}
The diagonal entries of (\ref{1112entries}) follow immediately from (\ref{sIintRe}), but the off-diagonal entries deserve some explanation, and we provide the argument for the $(12)$ entry (similar arguments apply to the other off-diagonal entries): Since $l_1 - l_2 \in i\R$ for $k \in \R$ and $l_1 - l_2 = O(k)$, we have, for any $f \in \mathcal{S}(\R)$, 
\begin{align}\label{exl1l2est}
\bigg|\int_\R (e^{-x(l_1-l_2)} -1) f(x) dx\bigg|
\leq \int_{\R} |x(l_2-l_1) f(x)| dx = O(k), \qquad k \to 0, 
\end{align}
and using this estimate $e^{-x(l_1-l_2)}$ can be replaced by $1$ in the derivation of (\ref{1112entries}). 

Next we use \eqref{Cjminus2}-\eqref{Cjminus0} to obtain
\begin{align}\label{mathsfX at 0}
\mathcal{X}(x,0) = 3 \begin{pmatrix}
\omega \delta_{1,3} & \omega^{2} \delta_{1,3} & \delta_{1,3} \\
\omega \gamma_{1} & \omega^{2} \gamma_{1} & \gamma_{1} \\
\omega \alpha_{1} & \omega^{2} \alpha_{1} & \alpha_{1}
\end{pmatrix},
\end{align}
and we deduce the expression \eqref{spm2p} by substituting \eqref{mathsfX at 0}, as well as the expressions for $P^{(-2)}$ and $\tilde{\mathsf{U}}$ given by \eqref{def of mathsfU tilde} and \eqref{Pinvat1}, into \eqref{1112entries}.

Assume now that $u_0(x), v_0(x)$ have compact support. Then the integral in (\ref{sdef}) converges for all $k \in \C \setminus \{0\}$, so that all entries of $X(x,k)$, $Y(x,k)$, and $s(k)$ are well-defined and analytic for $k \in \C \setminus \{0\}$. Since both $X$ and $Y$ solve (\ref{xpart}), it follows that there exists a function $s(k)$ which is independent of $x$ and such that (\ref{XYs}) holds. We determine $s(k)$ by rewriting (\ref{XYs}) as 
$$s(k) = e^{-x\hat{\mathcal{L}}}(Y^{-1}X)
= (e^{-x\hat{\mathcal{L}}}(Y^{-1}))\bigg(I - \int_x^{\infty} e^{-x'\widehat{\mathcal{L}(k)}} (\mathsf{U}X)(x',k) dx'\bigg).$$ 
Taking the limit $x \to -\infty$ and using that $Y(x,k) = I$ and $\mathsf{U}(x,k) = 0$ for all sufficiently large negative $x$, it follows that $s(k)$ is given by (\ref{sdef}). This proves (\ref{XYs}).
Since both  $X$ and $Y$ have unit determinant, we find $\det s = \det(Y^{-1}X) = 1$.
\end{proof}

\subsection{The cofactor matrix}
If a $3 \times 3$ matrix $B$ has unit determinant then $(B^{-1})^T = B^A$, where $B^A$ denotes the cofactor matrix of $B$ defined by
\begin{align}\label{cofactordef}
B^{A} = \begin{pmatrix} m_{11}(B) & -m_{12}(B) & m_{13}(B) \\
-m_{21}(B) & m_{22}(B) & -m_{23}(B) \\
m_{31}(B) & -m_{32}(B) & m_{33}(B)
\end{pmatrix}.
\end{align}
Here $m_{ij}(B)$ denotes the $(ij)$th minor $B$, i.e., $m_{ij}(B)$ equals the determinant of the $2 \times 2$-matrix obtained by deleting the $i$th row and the $j$th column from $B$.

Assume for the moment that $u_0, v_0 \in \mathcal{S}(\R)$ are compactly supported so that all entries of $X(x,k)$, $Y(x,k)$, and $s(k)$ are defined for $k \in \C \setminus \{0\}$. Using the relation $(X^{A})_{x} = -X^{A}(X_{x})^{T}X^{A}$ in (\ref{xpart}), we see that $X^A = (X^{-1})^T$ satisfies
\begin{align}\label{xpartA}
(X^A)_x + [\mathcal{L}, X^A] = -\mathsf{U}^TX^A.
\end{align}
Using that $X^A \to I$ as $x \to +\infty$, we conclude that $X^A$ satisfies the following Volterra integral equation
\begin{equation}\label{XXAdefb}
X^{A}(x,k) = I + \int_{x}^{\infty}e^{-(x-x')\widehat{\mathcal{L}(k)}}(\mathsf{U}^{T}X^{A})(x',k)dx'.
\end{equation}
Moreover, the cofactor matrix $s^A = (s^{-1})^T$ is well-defined for $k \in \C \setminus \{0\}$ and the definition \eqref{r1r2def} of $r_2(k)$ can be written as
\begin{align}\nonumber
& r_2(k) = -\frac{m_{12}(s(k))}{m_{11}(s(k))} = \frac{s_{23}(k)s_{31}(k) - s_{21}(k)s_{33}(k)}{s_{22}(k)s_{33}(k) - s_{23}(k)s_{32}(k)}, 
	\\\label{r2expression}
& \hspace{3cm} k \in \R_-, \ \text{$u_0$ and $v_0$ compactly supported}.
\end{align}
From this equation and Proposition \ref{sprop}, we can deduce various properties of $r_2(k)$ if $u_0, v_0$ are compactly supported. 

If $u_0, v_0$ are not compactly supported, then, in general, the entries $s_{ij}$ of $s$ appearing on the right-hand side of (\ref{r2expression}) are not all defined for $k \in \R_-$. Therefore, the above proof needs to be changed. Our strategy will be to show that the matrix $s^A$ is well-defined in terms of  $X^A$ by the following expression
\begin{equation}\label{sAdef}
s^{A}(k) = I + \int_{\mathbb{R}} e^{x \widehat{\mathcal{L}(k)}}(\mathsf{U}^{T}X^{A})(x,k)dx,
\end{equation}
even if $u_0, v_0$ are not compactly supported. We will keep the notation $s^A$ for the function defined in (\ref{sAdef}) even if the minors of $s$ are not defined in the usual sense.

\subsection{The eigenfunctions $X^A$ and $Y^A$} 
We define two $3 \times 3$-matrix valued solutions $X^A(x,k)$ and $Y^A(x,k)$ of (\ref{xpartA}) as the solutions of the linear Volterra integral equations
\begin{subequations}\label{XAYAdef}
\begin{align}  
 & X^A(x,k) = I + \int_x^{\infty} e^{-(x-x')\widehat{\mathcal{L}(k)}} (\mathsf{U}^T X^A)(x',k) dx',	
  	\\ 
 & Y^A(x,k) = I - \int_{-\infty}^x e^{-(x-x')\widehat{\mathcal{L}(k)}} (\mathsf{U}^T Y^A)(x',k) dx'.
\end{align}
\end{subequations}
The same kind of analysis that led to Propositions \ref{XYprop} and \ref{XYprop2} gives the following results for $X^A$ and $Y^A$. 
\begin{proposition}[Basic properties of $X^A$ and $Y^A$]\label{XAYAprop}
Suppose $u_0, v_0 \in \mathcal{S}(\R)$. 
Then the equations (\ref{XAYAdef}) uniquely define two $3 \times 3$-matrix valued solutions $X^A$ and $Y^A$ of (\ref{xpartA}) with the following properties:
\begin{enumerate}[$(a)$]
\item The function $X^A(x, k)$ is defined for $x \in \R$ and $k \in (-\omega^2 \bar{\mathrm{S}}, -\omega \bar{\mathrm{S}}, -\bar{\mathrm{S}}) \setminus \{0\}$. For each $k \in (-\omega^2 \bar{\mathrm{S}}, -\omega \bar{\mathrm{S}}, -\bar{\mathrm{S}}) \setminus \{0\}$, $X^A(\cdot, k)$ is smooth and satisfies (\ref{xpartA}).

\item The function $Y^A(x, k)$ is defined for $x \in \R$ and $k \in (\omega^2 \bar{\mathrm{S}}, \omega \bar{\mathrm{S}}, \bar{\mathrm{S}}) \setminus \{0\}$. For each $k \in (\omega^2 \bar{\mathrm{S}}, \omega \bar{\mathrm{S}}, \bar{\mathrm{S}}) \setminus \{0\}$, $Y^A(\cdot, k)$ is smooth and satisfies (\ref{xpartA}).

\item For each $x \in \R$, the function $X^A(x,\cdot)$ is continuous for $k \in (-\omega^2 \bar{\mathrm{S}}, -\omega \bar{\mathrm{S}}, -\bar{\mathrm{S}})\setminus \{0\}$ and analytic for $k \in (-\omega^2 \mathrm{S}, -\omega \mathrm{S}, -\mathrm{S}) \setminus \{0\}$.

\item For each $x \in \R$, the function $Y^A(x,\cdot)$ is continuous for $k \in (\omega^2 \bar{\mathrm{S}}, \omega \bar{\mathrm{S}}, \bar{\mathrm{S}})\setminus \{0\}$ and analytic for $k \in (\omega^2 \mathrm{S}, \omega \mathrm{S}, \mathrm{S}) \setminus \{0\}$.

\item For each $x \in \R$ and each $j = 1, 2, \dots$, the partial derivative $\frac{\partial^j X^A}{\partial k^j}(x, \cdot)$ has a continuous extension to $(-\omega^2 \bar{\mathrm{S}}, -\omega \bar{\mathrm{S}}, -\bar{\mathrm{S}})\setminus \{0\}$.

\item For each $x \in \R$ and each $j = 1, 2, \dots$, the partial derivative $\frac{\partial^j Y^A}{\partial k^j}(x, \cdot)$ has a continuous extension to $(\omega^2 \bar{\mathrm{S}}, \omega \bar{\mathrm{S}}, \bar{\mathrm{S}})\setminus \{0\}$.

\item For each $n \geq 1$ and $\epsilon > 0$, there are bounded smooth positive functions $f_+(x)$ and $f_-(x)$ of $x \in \R$ with rapid decay as $x \to +\infty$ and $x \to -\infty$, respectively, such that the following estimates hold for $x \in \R$ and $j = 0, 1, \dots, n$:
\begin{align*}
& \bigg|\frac{\partial^j}{\partial k^j}\big(X^A(x,k) - I\big) \bigg| \leq
f_+(x), \qquad k \in (-\omega^2 \bar{\mathrm{S}}, -\omega \bar{\mathrm{S}}, 
-\bar{\mathrm{S}}), \ |k| > \epsilon,
	\\ 
& \bigg|\frac{\partial^j}{\partial k^j}\big(Y^A(x,k) - I\big) \bigg| \leq
f_-(x), \qquad k \in (\omega^2 \bar{\mathrm{S}}, \omega \bar{\mathrm{S}}, \bar{\mathrm{S}}), \ |k| > \epsilon.
\end{align*}

\item $X^A$ and $Y^A$ obey the following symmetries for each $x \in \R$:
\begin{align}\nonumber
&  X^A(x, k) = \mathcal{A} X^A(x,\omega k)\mathcal{A}^{-1} = \mathcal{B} \overline{X^A(x,\overline{k})}\mathcal{B}, 
	\\ \label{XAsymm}
&\hspace{7cm} k \in (-\omega^2 \bar{\mathrm{S}}, -\omega \bar{\mathrm{S}}, -\bar{\mathrm{S}})\setminus \{0\},
	\\\nonumber
&  Y^A(x, k) = \mathcal{A} Y^A(x,\omega k)\mathcal{A}^{-1} = \mathcal{B} \overline{Y^A(x,\overline{k})}\mathcal{B}, 
	\\
&\hspace{7cm} k \in (\omega^2 \bar{\mathrm{S}}, \omega \bar{\mathrm{S}}, \bar{\mathrm{S}})\setminus \{0\},
\end{align}
\item If $u_0(x), v_0(x)$ have compact support, then, for each  $x \in \R$,  $X^A(x, k)$ and $Y^A(x, k)$ are defined and analytic for $k \in \C \setminus \{0\}$ and $\det X^A = \det Y^A = 1$.
\end{enumerate}
\end{proposition}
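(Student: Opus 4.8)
The plan is to mirror the proof of Proposition~\ref{XYprop} (and Proposition~\ref{XYprop2} for the large-$k$ estimates), since the equations \eqref{XAYAdef} defining $X^A$ and $Y^A$ are Volterra integral equations of exactly the same type as \eqref{XYdef}, differing only in that the integrand contains $\mathsf{U}^T$ in place of $\mathsf{U}$ and that the kernel carries the \emph{opposite} sign in the exponent, namely $e^{-(x-x')\widehat{\mathcal{L}(k)}}$ instead of $e^{(x-x')\widehat{\mathcal{L}(k)}}$. The first step is to set up the successive-approximation (Neumann) series for each equation and to record that the $(i,j)$ entry of the kernel acquires the scalar factor $e^{-(x-x')(l_i-l_j)}$. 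For the equation \eqref{XXAdefb} for $X^A$, where the integral runs over $x'\ge x$, this factor is bounded precisely when $\re l_j \ge \re l_i$ for all $i$, i.e.\ when $l_j$ has maximal real part; for the column $j=3$ this is the sector where $\re l_3$ is largest, which is exactly $-\mathcal{S}$ --- the reflection through the origin of the sector $\mathcal{S}$ in which the third column of $X$ was analytic. Thus the sign flip in the exponent negates all the analyticity sectors relative to those of $X$ and $Y$, which is the source of the domains $(-\omega^2\bar{\mathcal{S}},-\omega\bar{\mathcal{S}},-\bar{\mathcal{S}})$ in parts $(a)$, $(c)$, $(e)$. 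For $Y^A$, whose integral runs over $x'\le x$, boundedness requires $\re l_i\ge \re l_j$, so the distinguished column is analytic where $\re l_3$ is smallest, namely $\mathcal{S}$, giving parts $(b)$, $(d)$, $(f)$.

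With the boundedness regions identified, existence, uniqueness, continuity up to the boundary, interior analyticity, and smoothness in $x$ follow from the standard contraction/iteration estimates for Volterra equations, just as in Proposition~\ref{XYprop}; the $\mathbb{Z}_3$ symmetry then propagates the information from the third column to the other two. The uniform bounds in $(g)$ and the existence of continuous $k$-derivatives in $(e)$, $(f)$ are obtained by differentiating the Volterra equations under the integral sign and using that $\mathsf{U}^T(\cdot,k)\in\mathcal{S}(\R)$ with the stated decay; the large-$k$ estimates are handled exactly as in Proposition~\ref{XYprop2}.

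For the symmetries $(h)$, the plan is to exploit that, as already recorded in the proof of Proposition~\ref{XYprop}, both $\mathcal{L}$ and $\mathsf{U}$ satisfy $F(k)=\mathcal{A}\,F(\omega k)\,\mathcal{A}^{-1}=\mathcal{B}\,\overline{F(\bar k)}\,\mathcal{B}$ as in \eqref{Acaldef}--\eqref{Bcaldef}. Transposing these identities and using $\mathcal{A}^T=\mathcal{A}^{-1}$ and $\mathcal{B}^T=\mathcal{B}$ gives $\mathsf{U}^T(k)=\mathcal{A}\,\mathsf{U}^T(\omega k)\,\mathcal{A}^{-1}=\mathcal{B}\,\overline{\mathsf{U}^T(\bar k)}\,\mathcal{B}$, while $\mathcal{L}$ is unaffected by transposition. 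Applying $\mathcal{A}(\cdot)\mathcal{A}^{-1}$ and $\mathcal{B}\,\overline{(\cdot)}\,\mathcal{B}$ to \eqref{XAYAdef} then shows that $\mathcal{A}X^A(x,\omega k)\mathcal{A}^{-1}$ and $\mathcal{B}\,\overline{X^A(x,\bar k)}\,\mathcal{B}$ satisfy the very Volterra equation defining $X^A(x,k)$, so uniqueness forces \eqref{XAsymm}, and likewise for $Y^A$. Finally, for the compact-support statement $(i)$, since $\mathsf{U}^T$ then has compact support the integrals in \eqref{XAYAdef} converge for every $k\in\C\setminus\{0\}$, giving analyticity there; and because $(X^{-1})^T$ solves the same equation \eqref{XXAdefb} with the same normalization $I$ at $x=+\infty$ --- this is exactly where the derivation of \eqref{xpartA} together with $\det X=1$ from Proposition~\ref{XYprop}$(i)$ enters --- uniqueness yields $X^A=(X^{-1})^T$ and hence $\det X^A=(\det X)^{-1}=1$; the argument for $Y^A$ is identical.

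The only genuinely delicate point, and the one I would be most careful about, is the bookkeeping of the boundedness sectors under the combined effect of the sign change in the exponent and the transpose in the potential, since a slip there would misplace the analyticity domains. Everything else is a routine transcription of the Volterra analysis underlying Propositions~\ref{XYprop} and~\ref{XYprop2}.
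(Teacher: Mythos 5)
Your proposal is correct and follows essentially the same route as the paper, which proves this proposition simply by invoking "the same kind of analysis that led to Propositions \ref{XYprop} and \ref{XYprop2}": a Volterra/Neumann-series argument in which the sign flip in the exponent reverses all boundedness sectors (your bookkeeping that column $j$ of $X^A$ requires $\re l_j$ maximal, giving $-\mathcal{S}$ for $j=3$ and hence the domains $(-\omega^2\bar{\mathcal{S}},-\omega\bar{\mathcal{S}},-\bar{\mathcal{S}})$, is exactly right), with the symmetries following from uniqueness since $\mathcal{L}$ and $\mathsf{U}^T$ obey \eqref{Acaldef}--\eqref{Bcaldef}, and the compact-support statement following from the identification $X^A=(X^{-1})^T$ via \eqref{xpartA} and the normalization at $x=+\infty$.
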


Proposition \ref{XAYAprop} shows that the entries of the cofactor matrix $X^A$ have larger domains of definitions than suggested by their definitions as minors of $X$. For example, by Proposition \ref{XYprop}, the second and third columns of $X$ are defined for $k \in \omega \bar{\mathrm{S}}\setminus \{0\}$ and $k \in \bar{\mathrm{S}}\setminus \{0\}$, respectively. The minors in the first column of the cofactor matrix $X^A$ are therefore defined for $k \in (\omega \bar{\mathrm{S}} \cap \bar{\mathrm{S}}) \setminus \{0\} = \omega^{2}\mathbb{R}_{+}$. However, Proposition \ref{XAYAprop} shows that the integral equation (\ref{XXAdefb}) actually defines the first column of $X^A$ for all $k$ in the larger set $-\omega^2 \bar{\mathrm{S}}\setminus \{0\}$. By uniqueness, these two definitions of $X^A$ coincide for $k \in \omega^{2}\mathbb{R}_{+}$. The point is that a combination such as $m_{11}(X) = X_{22}X_{33} - X_{23}X_{32}$ can be analytically extended to all of $-\omega^2\mathrm{S}\setminus \{0\}$ even if the individual factors $\{X_{ij}\}_{i,j=2,3}$ cannot.

\begin{proposition}[Asymptotics of $X^A$ and $Y^A$ as $k \to \infty$]\label{XAYAprop2}
Suppose $u_0, v_0 \in \mathcal{S}(\R)$. 
As $k \to \infty$, $X^A$ and $Y^A$ coincide to all orders with $X^A_{formal}$ and $Y^A_{formal}$, respectively. More precisely, let $p \geq 1$ be an integer and let $X^A_{(p)}(x,k)$ and $Y^A_{(p)}(x,k)$ be the cofactor matrices of the functions in (\ref{Xpdef}). Then, for each integer $j \geq 0$,
\begin{subequations}\label{XAasymptotics}
\begin{align}\label{XAasymptoticsa}
& \bigg|\frac{\partial^j}{\partial k^j}\big(X^A - X^A_{(p)}\big) \bigg| \leq
\frac{f_+(x)}{|k|^{p+1}}, \qquad x \in \R, \  k \in (-\omega^2 \bar{\mathrm{S}}, -\omega \bar{\mathrm{S}}, -\bar{\mathrm{S}}), \ |k| \geq 2,
	\\ \label{XAasymptoticsb}
& \bigg|\frac{\partial^j}{\partial k^j}\big(Y^A - Y^A_{(p)}\big) \bigg| \leq
\frac{f_-(x)}{|k|^{p+1}}, \qquad x \in \R, \  k \in (\omega^2 \bar{\mathrm{S}}, \omega \bar{\mathrm{S}}, \bar{\mathrm{S}}), \ |k| \geq 2,
\end{align}
\end{subequations}
where $f_+(x)$ and $f_-(x)$ are bounded smooth positive functions of $x \in \R$ with rapid decay as $x \to +\infty$ and $x \to -\infty$, respectively.
\end{proposition}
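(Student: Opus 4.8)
The plan is to deduce Proposition \ref{XAYAprop2} from Proposition \ref{XYprop2} together with the algebraic fact that, at $k = \infty$, forming the cofactor matrix commutes with the formal construction. The starting observation is that the equation (\ref{xpartA}) for $X^A$ has exactly the same structure as (\ref{xpart}) for $X$: writing $[\mathcal{L}, X^A] = -[-\mathcal{L}, X^A]$, equation (\ref{xpartA}) reads $(X^A)_x - [-\mathcal{L}, X^A] = (-\mathsf{U}^T)X^A$, which is precisely (\ref{xpart}) under the replacements $\mathcal{L} \mapsto -\mathcal{L}$ and $\mathsf{U} \mapsto -\mathsf{U}^T$. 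Since at $t = 0$ we have $-\mathsf{U}^T = -\mathsf{U}_1(x)^T/k - \mathsf{U}_2(x)^T/k^2$ with Schwartz coefficients, and since the Volterra equation (\ref{XAYAdef}) for $X^A$ uses the propagator $e^{-(x-x')\widehat{\mathcal{L}(k)}}$ consistent with the sign change, the whole argument of Proposition \ref{XYprop2} applies verbatim. This produces a formal power series solution $X^A_{formal} = I + \sum_{j\geq 1} X^A_j(x)/k^j$ of (\ref{xpartA}), normalized so that $X^A_j(x) \to 0$ as $x \to +\infty$, together with the estimate $|\partial_k^j(X^A - \tilde X^A_{(p)})| \leq f_+(x)/|k|^{p+1}$ for $|k| \geq 2$, where $\tilde X^A_{(p)} := I + \sum_{j=1}^p X^A_j/k^j$ is the $p$-th truncation of this series. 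The shifted analyticity sectors $(-\omega^2\bar{\mathcal{S}}, -\omega\bar{\mathcal{S}}, -\bar{\mathcal{S}})$ appear precisely because of the sign change $\mathcal{L}\mapsto-\mathcal{L}$, matching Proposition \ref{XAYAprop}.

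It then remains to identify the truncated formal series $\tilde X^A_{(p)}$ with the cofactor matrix $X^A_{(p)} = (X_{(p)})^A$ of the statement, up to an error of order $k^{-p-1}$. The key point is that $X^A_{formal} = (X_{formal})^A$ as formal series. To see this, note first that $\det X_{formal} \equiv 1$: if $B$ solves $B_x - [\mathcal{L}, B] = \mathsf{U}B$ then $(\det B)_x = (\det B)\,\tr \mathsf{U} = 0$, since $\mathsf{U} = L - \mathcal{L}$ is traceless, and the normalization $X_{formal} \to I$ as $x \to +\infty$ forces the constant value $1$. Consequently $(X_{formal})^A = (X_{formal}^{-1})^T$, which, by the same computation that led to (\ref{xpartA}), is a formal solution of (\ref{xpartA}) normalized to $I$ with coefficients vanishing at $+\infty$; by uniqueness of the formal solution it must equal $X^A_{formal}$. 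Since each entry of a cofactor matrix is a $2\times2$ minor, hence quadratic in the matrix entries, the coefficients of $k^0, k^{-1}, \dots, k^{-p}$ in $(X_{(p)})^A$ involve only $X_0, \dots, X_p$ and therefore coincide with the corresponding coefficients of $X^A_{formal}$. Thus $X^A_{(p)} - \tilde X^A_{(p)} = \sum_{m=p+1}^{2p}(\cdots)k^{-m}$ is an explicit finite sum of terms of order $k^{-p-1}$ and lower, each a product $X_aX_b$ with $a+b = m \geq p+1$ in which at least one factor decays rapidly as $x \to +\infty$. Writing $X^A - X^A_{(p)} = (X^A - \tilde X^A_{(p)}) + (\tilde X^A_{(p)} - X^A_{(p)})$ and bounding the second bracket by a bounded, rapidly decaying function of $x$ divided by $|k|^{p+1}$, the estimate (\ref{XAasymptoticsa}) follows after enlarging $f_+$ if necessary. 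The estimate (\ref{XAasymptoticsb}) for $Y^A$ is obtained identically, with $+\infty$ replaced by $-\infty$ and $f_+$ by $f_-$.

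The routine parts — the Volterra/quotient estimates underlying the first paragraph and the termwise $k$-derivative bounds — are identical to those already carried out for $X$ and $Y$, so I would only reference them. The step requiring genuine care is the reconciliation in the second paragraph: for general (non-compactly supported) $u_0, v_0$ the minors of $X$ are \emph{not} defined on all of $(-\omega^2\bar{\mathcal{S}}, -\omega\bar{\mathcal{S}}, -\bar{\mathcal{S}})$, so one cannot take cofactors of $X$ pointwise and must instead argue through the formal series at $k = \infty$, where all objects are well defined, together with the independent Volterra analysis of (\ref{XAYAdef}). Establishing the algebraic identity $X^A_{formal} = (X_{formal})^A$ is therefore the crux; once it is in place, matching orders in $1/k$ and invoking Proposition \ref{XYprop2} completes the proof.
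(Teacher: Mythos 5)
Your proof is correct and follows essentially the same route the paper intends: the paper offers no separate argument for Proposition \ref{XAYAprop2} beyond the remark that ``the same kind of analysis that led to Propositions \ref{XYprop} and \ref{XYprop2}'' applies to the Volterra equations (\ref{XAYAdef}), which is exactly your first paragraph (the substitution $\mathcal{L} \mapsto -\mathcal{L}$, $\mathsf{U} \mapsto -\mathsf{U}^T$). Your second paragraph---showing $X^A_{formal} = (X_{formal})^A$ via $\det X_{formal} \equiv 1$ and uniqueness of the normalized formal solution, and then matching coefficients of $k^{-m}$, $m \le p$, so that $(X_{(p)})^A - \tilde X^A_{(p)}$ consists only of harmless terms of order $k^{-p-1}$ and lower---is a step the paper leaves implicit but which its statement (defining $X^A_{(p)}$ as the cofactor of $X_{(p)}$) genuinely requires, and you have supplied it correctly.
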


\begin{proposition}[Asymptotics of $X^A$ and $Y^A$ as $k \to 0$]\label{XAYAat1prop}
Suppose $u_0, v_0 \in \mathcal{S}(\R)$ and let $p \geq 0$ be an integer. 
Then there are $3 \times 3$-matrix valued functions $D_i^{(l)}(x)$, $i = 1,2$, $l = -2,-1, \dots, p$, with the following properties:
\begin{itemize}
\item For $x \in \R$ and $k \in (-\omega^2 \bar{\mathrm{S}}, -\omega \bar{\mathrm{S}}, -\bar{\mathrm{S}})$, the function $X^A$ satisfies
\begin{subequations}\label{XAYAat1}
\begin{align}\label{XAYAat1a}
& \bigg|\frac{\partial^j}{\partial k^j}\big(X^A - I - \sum_{l=-2}^p D_1^{(l)}(x)k^l\big) \bigg| \leq
f_+(x)|k|^{p+1-j}, \qquad |k| \leq \frac{1}{2},
\end{align}
while, for $x \in \R$ and $k \in (\omega^2 \bar{\mathrm{S}}, \omega \bar{\mathrm{S}}, \bar{\mathrm{S}})$, the function $Y$ satisfies
\begin{align}
& \bigg|\frac{\partial^j}{\partial k^j}\big(Y^A - I - \sum_{l=-2}^p D_2^{(l)}(x)k^l\big) \bigg| \leq
f_-(x)|k|^{p+1-j}, \qquad |k| \leq \frac{1}{2},
\end{align}
\end{subequations}
where $f_+(x)$ and $f_-(x)$ are smooth positive functions of $x \in \R$ with rapid decay as $x \to +\infty$ and $x \to -\infty$, respectively, and $j \geq 0$ is any integer.

\item For each $l\geq -1$, $D_1^{(l)}(x)$ and $D_2^{(l)}(x)$ are smooth functions of $x \in \R$ which have rapid decay as $x \to +\infty$ and $x \to -\infty$, respectively.

\item The leading coefficients have the form
\begin{align}
D_{i}^{(-2)}(x)
= & \;
\tilde{\alpha}_{i}(x)\begin{pmatrix}
\omega & \omega & \omega \\
\omega^{2} & \omega^{2} & \omega^{2} \\
1 & 1 & 1 
\end{pmatrix}, \label{Djminus2} \\
D_{i}^{(-1)}(x) = & \;  \tilde{\beta}_{i}(x) \begin{pmatrix}
\omega^{2} & 1 & \omega \\
1 & \omega & \omega^{2} \\
\omega & \omega^{2} & 1 
\end{pmatrix} + \tilde{\gamma}_{i}(x) \begin{pmatrix}
\omega^{2} & \omega^{2} & \omega^{2} \\
\omega & \omega & \omega \\
1 & 1 & 1 
\end{pmatrix},  \label{Djminus1} \\
D_{i}^{(0)}(x) = & -I + \tilde{\delta}_{i,1}(x) \begin{pmatrix}
1 & \omega^{2} & \omega \\
\omega & 1 & \omega^{2} \\
\omega^{2} & \omega & 1
\end{pmatrix}  \nonumber \\
&  + \tilde{\delta}_{i,2}(x) \begin{pmatrix}
1 & \omega & \omega^{2} \\
\omega^{2} & 1 & \omega \\
\omega & \omega^{2} & 1
\end{pmatrix} + \tilde{\delta}_{i,3}(x)\begin{pmatrix}
1 & 1 & 1 \\
1 & 1 & 1 \\
1 & 1 & 1
\end{pmatrix}, \label{Djminus0}
\end{align}
where $\tilde{\alpha}_i(x)$, $\tilde{\beta}_i(x)$, $\tilde{\gamma}_i(x)$, $\tilde{\delta}_{i,j}(x)$, $i = 1,2$, $j=1,2,3$ are real-valued functions of $x \in \R$. Furthermore, there exist bounded functions $f_{1}(x)$ and $f_{2}(x)$, with rapid decay at $+\infty$ and $-\infty$ respectively, such that
\begin{align*}
& |\tilde{\alpha}_{i}(x)| \leq f_{i}(x), 
	\\
& |\tilde{\beta}_{i}(x)| + |\tilde{\gamma}_{i}(x)| \leq (1+|x|)f_{i}(x), 
	\\
& |\tilde{\delta}_{i,j}(x)-\tfrac{1}{3}| \leq (1+|x|)^{2}f_{i}(x),
\end{align*}
for all $x \in \mathbb{R}$, $i=1,2$, and $j=1,2,3$.
\end{itemize}
\end{proposition}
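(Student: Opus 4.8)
The plan is to mirror the proof of Proposition \ref{XYat1prop}, the only essential change being the choice of conjugating matrix that regularizes the Volterra kernel at $k=0$. In the proof of Proposition \ref{XYat1prop} the substitution $\mathcal{X}=PX$ worked because $\mathsf{U}=P^{-1}\tilde{\mathsf{U}}P$ with $\tilde{\mathsf{U}}$ $k$-independent (see \eqref{def of mathsfU tilde}), so left-multiplication by $P$ both removed the conjugation in the kernel and left an analytic inhomogeneous term. For $X^A$ the relevant identity is $\mathsf{U}^T=P^T\tilde{\mathsf{U}}^T(P^T)^{-1}$, so the correct substitution is $\mathcal{X}^A:=(P^T)^{-1}X^A$, i.e.\ $X^A=P^T\mathcal{X}^A$. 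Inserting this into \eqref{XAYAdef} and using $\mathsf{U}^TP^T=P^T\tilde{\mathsf{U}}^T$ turns the equation into
\[
\mathcal{X}^A(x,k)=(P(k)^T)^{-1}+\int_x^\infty \big(P(k)e^{-(x-x')\mathcal{L}(k)}P(k)^{-1}\big)^T\,\tilde{\mathsf{U}}^T(x')\,\mathcal{X}^A(x',k)\,e^{(x-x')\mathcal{L}(k)}\,dx',
\]
whose kernel is analytic in $k$ at $k=0$, since $Pe^{-(x-x')\mathcal{L}}P^{-1}$ is analytic at $k=0$ by the same computation as in Proposition \ref{XYat1prop} and $e^{(x-x')\mathcal{L}}=e^{(x-x')kJ}$ is entire. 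An entirely analogous substitution $\mathcal{Y}^A:=(P^T)^{-1}Y^A$ handles $Y^A$.

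The new feature, compared with the $X$-case, is that the inhomogeneous term is now the singular matrix $(P^T)^{-1}$ rather than the analytic matrix $P$; by \eqref{Pinvat1} it is a finite Laurent polynomial with a double pole, $(P^T)^{-1}=(P^{(-2)})^Tk^{-2}+(P^{(-1)})^Tk^{-1}+(P^{(0)})^T$. I would therefore solve the Volterra equation order by order in $k$: writing $\mathcal{X}^A=\sum_{l\ge -2}A_l(x)k^l$ and expanding the analytic kernel in powers of $k$, each coefficient $A_l$ is seen to satisfy a Volterra equation driven by the corresponding coefficient of $(P^T)^{-1}$ together with the action of the higher kernel-coefficients on $A_{-2},\dots,A_{l-1}$. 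Because the kernel has no negative powers of $k$ and the inhomogeneous term starts at $k^{-2}$, no power below $k^{-2}$ can be generated, so $\mathcal{X}^A$ has at most a double pole; the standard Volterra remainder estimate after truncating at order $p$ yields the bound \eqref{XAYAat1a}. Reassembling $X^A=P^T\mathcal{X}^A$ and using that $P^T$ is a matrix polynomial of degree two then gives $D_1^{(l)}(x)=\sum_{m=0}^{2}\tfrac{1}{m!}(\partial_k^m P^T)(0)\,A_{l-m}(x)$, again with at most a double pole.

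It remains to pin down the explicit forms \eqref{Djminus2}--\eqref{Djminus0} and the accompanying growth bounds. For the structure I would use the symmetries \eqref{XAsymm}, which descend to the coefficients $A_l$ exactly as the $\mathcal{A}$- and $\mathcal{B}$-symmetries were used in Proposition \ref{XYat1prop}. For instance $D_1^{(-2)}=P(0)^TA_{-2}$ has every column proportional to $(\omega,\omega^2,1)^T$ because of the explicit shape of $P(0)^T$, and the relations $X^A(x,k)=\mathcal{A}X^A(x,\omega k)\mathcal{A}^{-1}=\mathcal{B}\overline{X^A(x,\bar k)}\mathcal{B}$ collapse the three column-factors into a single real parameter $\tilde{\alpha}_1(x)$, giving \eqref{Djminus2}; the analogous reductions produce the two- and three-parameter forms of $D_i^{(-1)}$ and $D_i^{(0)}$, including the $-I$ term. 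The $x$-dependence is controlled as in the $X$-case: evaluating the kernel at $k=0$ gives an explicit polynomial in $x-x'$, from which one reads off that the relevant rows of $A_l(x)$ and of its $k$-derivatives grow at most polynomially as $x\to-\infty$ and decay rapidly as $x\to+\infty$; transporting these bounds through the fixed polynomial $P^T$ yields the stated estimates on $\tilde{\alpha}_i,\tilde{\beta}_i,\tilde{\gamma}_i,\tilde{\delta}_{i,j}$. The case of $Y^A$ is identical with the roles of $+\infty$ and $-\infty$ interchanged.

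The main obstacle is precisely this relocation of the singularity: unlike $\mathcal{X}=PX$, the conjugated function $\mathcal{X}^A$ is not itself analytic at $k=0$, so one cannot simply Taylor expand but must instead resolve the Volterra equation coefficient by coefficient and then track carefully how the polynomial factor $P^T$ redistributes these Laurent coefficients. The delicate bookkeeping is to confirm both that no pole worse than double survives and that the leading coefficients collapse onto the one-, two-, and three-parameter families dictated by the $\mathbb{Z}_3\times\mathbb{Z}_2$ symmetry; once these symmetry reductions are secured, the remaining growth and decay estimates are routine.
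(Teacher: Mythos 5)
Your proposal is correct and takes essentially the same route as the paper's own proof: the paper likewise sets $\mathcal{X}^A=(P^T)^{-1}X^A$ (written there as $P^AX^A$), notes that the kernel $P^A(k)e^{-(x-x')\mathcal{L}(k)}P(k)^T$ is analytic at $k=0$ while the inhomogeneous term has a double pole, recovers $X^A=P^T\mathcal{X}^A$ with the same coefficient formulas, and pins down \eqref{Djminus2}--\eqref{Djminus0} and the growth bounds via the same $\mathcal{A}$/$\mathcal{B}$ symmetries and the explicit kernel at $k=0$. The only cosmetic difference is that where you resolve the Laurent coefficients order by order with remainder estimates, the paper simply multiplies the Volterra equation by $k^2$ and applies standard Volterra theory to conclude that $k^2\mathcal{X}^A$ is analytic near the origin, which is a slightly cleaner packaging of the same argument.
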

\begin{proof}
The function $\mathcal{X}^A := P^AX^A$ satisfies the integral equation
\begin{align}\nonumber
& \mathcal{X}^A (x,k) = P^A(k) + \int_x^\infty P^A(k)e^{-(x - x')\widehat{\mathcal{L}(k)}}(P(k)^T \tilde{\mathsf{U}}(x')^T\mathcal{X}^A (x',k)) dx', 
	\\ \label{tildeXAeq}
& \hspace{6cm} x \in \R, \ k \in (-\omega^2 \bar{\mathrm{S}}, -\omega \bar{\mathrm{S}}, -\bar{\mathrm{S}}) \setminus \{0\},
\end{align}
where $\tilde{\mathsf{U}} = P\mathsf{U}P^{-1}$ is independent of $k$.
A computation shows that the kernel 
\begin{align*}
\tilde{\mathcal{P}}(x,x',k) := P^A(k)e^{-(x - x')\mathcal{L}(k)}P(k)^T
\end{align*}
is analytic at $k=0$, and the statement follows in a similar way as in the proof of Proposition \ref{XYat1prop}.
\end{proof}

\subsection{The spectral function $s^A(k)$}
The following proposition is proved in the same way as Proposition \ref{sprop}.

\begin{proposition}[Properties of $s^A(k)$]\label{sAprop}
Suppose $u_0,v_0 \in \mathcal{S}(\R)$. 
Then the spectral function $s^A(k)$ defined in (\ref{sAdef}) has the following properties:
\begin{enumerate}[$(a)$]
\item $s^A(k)$ is defined and continuous for $k$ in
\begin{align}\label{sAdomainofdefinition}
 \begin{pmatrix}
 -\omega^2 \bar{\mathrm{S}} & \R_- & \omega \R_- \\
 \R_- & -\omega \bar{\mathrm{S}} & \omega^2 \R_- \\
 \omega \R_- & \omega^2 \R_- & -\bar{\mathrm{S}}
 \end{pmatrix}\setminus \{0\},
\end{align}
that is, the $(11)$ entry of $s^A(k)$ is defined and continuous for $k \in -\omega^2 \bar{\mathrm{S}}\setminus \{0\}$, etc. 

\item The diagonal entries of $s^A(k)$  are analytic in the interior of their domains of definition as given in (\ref{sAdomainofdefinition}). 
 
\item For $j = 1, 2, \dots$, the derivative $\partial_k^js^A(k)$ is well-defined and continuous for $k$ in (\ref{sAdomainofdefinition}).

\item $s^A(k)$ obeys the symmetries
\begin{align}\nonumber
&  s^A(k) = \mathcal{A} s^A(\omega k)\mathcal{A}^{-1} = \mathcal{B} \overline{s^A(\overline{k})}\mathcal{B}.
\end{align}

\item $s^{A}(k)$ approaches the identity matrix as $k \to \infty$. More precisely, there are diagonal matrices $\{s^{A}_j\}_1^\infty$ such that
\begin{align*}\nonumber
\Big|\partial_k^j \Big(s^{A}(k) - I - \sum_{j=1}^N \frac{s^{A}_j}{k^j}\Big)\Big| = O(k^{-N-1}), \qquad k \to \infty,
\ k \in \begin{pmatrix}
 -\omega^2 \bar{\mathrm{S}} & \R_- & \omega \R_- \\
 \R_- & -\omega \bar{\mathrm{S}} & \omega^2 \R_- \\
 \omega \R_- & \omega^2 \R_- & -\bar{\mathrm{S}}
 \end{pmatrix},
\end{align*}
for $j = 0, 1, \dots, N$ and each integer $N \geq 1$. In particular, the off-diagonal entries of $s^{A}(k)$ have rapid decay as $k \to \infty$.

\item As $k \to 0$,
\begin{align}\label{sA at 0}
s^{A}(k) = \frac{s^{A(-2)}}{k^{2}} + \frac{s^{A(-1)}}{k} + s^{A(0)} + \ldots, \qquad  k \in \begin{pmatrix}
 -\omega^2 \bar{\mathrm{S}} & \R_- & \omega \R_- \\
 \R_- & -\omega \bar{\mathrm{S}} & \omega^2 \R_- \\
 \omega \R_- & \omega^2 \R_- & -\bar{\mathrm{S}}
 \end{pmatrix},
\end{align}
where
\begin{align}\label{sApm2p}
s^{A(-2)} =  \mathfrak{s}^{A(-2)} \begin{pmatrix}
\omega & \omega & \omega \\
\omega^{2} & \omega^{2} & \omega^{2} \\
1 & 1 & 1 
\end{pmatrix}, \quad \mathfrak{s}^{A(-2)} := -\int_{\mathbb{R}}  (u_{0x}+v_{0})\tilde{\delta}_{1,3} dx \in \mathbb{R},
\end{align}
and the expansion can be differentiated termwise any number of times.
\item If $u_0(x), v_0(x)$ have compact support, then $s^A(k)$ is defined and equals the cofactor matrix of $s(k)$ for all $k \in \C \setminus \{0\}$.
\end{enumerate}

\end{proposition}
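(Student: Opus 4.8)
The plan is to repeat the proof of Proposition \ref{sprop} line by line, systematically replacing $X$, $Y$, $\mathsf{U}$, $\mathcal{X}$ by $X^A$, $Y^A$, $\mathsf{U}^T$, $\mathcal{X}^A$, and invoking Propositions \ref{XAYAprop}, \ref{XAYAprop2}, and \ref{XAYAat1prop} in place of Propositions \ref{XYprop}, \ref{XYprop2}, and \ref{XYat1prop}. The single structural difference is that the $(ij)$ entry of the defining integral (\ref{sAdef}) carries the exponential $e^{x(l_i-l_j)}$ rather than the factor $e^{-x(l_i-l_j)}$ present in (\ref{sdef}); this sign reversal is exactly what sends the domains $\R_+$ and $\mathcal{S}$ of Proposition \ref{sprop} to the reflected domains $\R_-$ and $-\mathcal{S}$ recorded in (\ref{sAdomainofdefinition}). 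Parts $(a)$--$(c)$ then follow by inserting the boundedness, continuity, analyticity, and $k$-differentiability properties of $X^A$ from Proposition \ref{XAYAprop} into (\ref{sAdef}) and reading off the domain of each entry; the diagonal entries contain no exponential and are therefore analytic. The symmetries in $(d)$ follow from the symmetries (\ref{XAsymm}) of $X^A$ together with $\mathsf{U}^T(k)=\mathcal{A}\,\mathsf{U}^T(\omega k)\mathcal{A}^{-1}=\mathcal{B}\,\overline{\mathsf{U}^T(\bar k)}\,\mathcal{B}$, which hold because $\mathcal{A}^T=\mathcal{A}^{-1}$ and $\mathcal{B}^T=\mathcal{B}$. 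For the large-$k$ expansion $(e)$ I would use Proposition \ref{XAYAprop2} to replace $X^A$ by the truncation $X^A_{(p)}$ up to an $O(k^{-p-1})$ error, then integrate by parts in the off-diagonal entries (where $e^{x(l_i-l_j)}$ appears and $l_i-l_j=O(k)$) and expand the diagonal entries directly, precisely as in part $(e)$ of Proposition \ref{sprop}. Finally, for the compact-support statement $(g)$, all entries of $X^A,Y^A,s^A$ are analytic on $\C\setminus\{0\}$ and $\det X=\det Y=1$, so that $X^A=(X^{-1})^T$ and $Y^A=(Y^{-1})^T$; taking cofactors of the identity (\ref{XYs}), using $(BC)^A=B^AC^A$ and $(e^{x\mathcal{L}})^A=e^{-x\mathcal{L}}$, and letting $x\to-\infty$ identifies the integral (\ref{sAdef}) with the cofactor matrix $(s^{-1})^T$ of $s$, just as in the derivation of (\ref{XYs}).

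\textbf{Behavior as $k\to0$.} The genuinely new content is part $(f)$. I would substitute the small-$k$ expansion of $X^A$ provided by Proposition \ref{XAYAat1prop} directly into (\ref{sAdef}), using the expansion $\mathsf{U}^T=(\mathsf{U}^{(2)})^T k^{-2}+(\mathsf{U}^{(1)})^T k^{-1}$ obtained by transposing (\ref{expression for U}). Since $X^A$ carries at most a double pole and $\mathsf{U}^T$ a double pole, the integrand is a priori as singular as $k^{-4}$; the crux is to check that the $k^{-4}$ and $k^{-3}$ contributions cancel, leaving at most a double pole, and then to extract the $k^{-2}$ coefficient. Retaining the leading terms and inserting the explicit forms (\ref{Djminus2})--(\ref{Djminus0}) of $D_1^{(l)}$ together with the expression (\ref{def of mathsfU tilde}) for $\tilde{\mathsf{U}}$ produces the rank-one matrix in (\ref{sApm2p}) with $\mathfrak{s}^{A(-2)}=-\int_\R(u_{0x}+v_0)\tilde{\delta}_{1,3}\,dx$. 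Exactly as in the derivation of (\ref{spm2p}), the off-diagonal entries require the additional observation that along the relevant rays $l_i-l_j\in i\R$ and $l_i-l_j=O(k)$, so that the oscillatory factor $e^{x(l_i-l_j)}$ may be replaced by $1$ at the cost of an $O(k)$ error by an estimate of the type (\ref{exl1l2est}); this is what forces the $k^{-2}$ coefficient to have the full rank-one structure rather than only a diagonal part. That the whole expansion (\ref{sA at 0}) may be differentiated termwise in $k$ follows from the $j$-indexed estimates in (\ref{XAYAat1}).

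\textbf{Main obstacle.} I expect the cancellation of the spurious $k^{-4}$ and $k^{-3}$ singularities in $(f)$ to be the delicate step. The reason it must occur is transparent in the compact-support case: there $s^A=(s^{-1})^T$ is the cofactor matrix of $s$, and the most singular coefficient $s^{(-2)}$ of $s$ in (\ref{spm2p}) has rank one (its three rows are all proportional to $(\omega,\omega^2,1)$), so every $2\times2$ minor of the leading singular part of $s$ vanishes and the cofactor can be no worse than $O(k^{-2})$; the same rank-one structure pins down the surviving coefficient as the matrix with columns proportional to $(\omega,\omega^2,1)^T$ displayed in (\ref{sApm2p}). For general Schwartz data this shortcut is unavailable, since the minors of $s$ entering (\ref{r2expression}) need not all be defined, so I would reproduce the cancellation directly inside the integral (\ref{sAdef}), using the $\mathbb{Z}_3$ and $\mathbb{Z}_2$ symmetries of $\mathsf{U}^T$ and the precise leading forms (\ref{Djminus2})--(\ref{Djminus1}) of $X^A$, and controlling the $x$-integrals by means of the decay bounds on $\tilde{\alpha}_i,\tilde{\beta}_i,\tilde{\gamma}_i$. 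Keeping track of how the double pole of $X^A$ interacts with the poles of $\mathsf{U}^T$ so that only a double pole of $s^A$ remains is the bookkeeping that makes this the hardest part of the proof.
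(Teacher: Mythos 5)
Your proposal is correct, and for parts $(a)$--$(e)$ and $(g)$ it coincides with the paper's proof, which is literally declared to be ``the same as Proposition \ref{sprop}'' with $X$, $\mathsf{U}$, $e^{-x\widehat{\mathcal{L}}}$ replaced by $X^A$, $\mathsf{U}^T$, $e^{x\widehat{\mathcal{L}}}$. The one place where you genuinely depart from the paper's template is part $(f)$. The mechanism underlying the paper's proof (see the proof of Proposition \ref{sprop}, equation (\ref{s in terms of U tilde}), and the proof of Proposition \ref{XAYAat1prop}) is to factor out all $k$-dependence of the potential \emph{before} expanding: one writes $\mathsf{U}^T X^A = P^T\,\tilde{\mathsf{U}}^T\,\mathcal{X}^A$, where $\tilde{\mathsf{U}}$ is the $k$-independent matrix (\ref{def of mathsfU tilde}) and $\mathcal{X}^A = P^A X^A$ is the function from the proof of Proposition \ref{XAYAat1prop}; since $P^T$ is analytic at $k=0$ and only $\mathcal{X}^A$ carries the double pole, no spurious $k^{-4}$ or $k^{-3}$ terms ever appear, and the coefficient (\ref{sApm2p}) is read off directly. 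You instead multiply the two double poles ($\mathsf{U}^T$ and $X^A$) and must verify a cancellation by hand. That route does work, and the cancellation you flag as the main obstacle is in fact immediate from the structures you cite: every row of $(\mathsf{U}^{(2)})^T$ and of $(\mathsf{U}^{(1)})^T$ is a scalar multiple of $(1,1,1)$, while every column of $D_1^{(-2)}$ and of $D_1^{(-1)}$ in (\ref{Djminus2})--(\ref{Djminus1}) has entries summing to zero (each being a combination of columns proportional to permutations of $(\omega,\omega^2,1)^T$); hence the $k^{-4}$ and $k^{-3}$ coefficients of $\mathsf{U}^T X^A$ vanish pointwise in $x$, before integration and independently of the bounded oscillatory factors, which multiply whole entries and so cannot disturb the cancellation. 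The same bookkeeping yields the $k^{-2}$ coefficient: $(\mathsf{U}^{(1)})^T D_1^{(-1)} = 0$ for the same reason, and in $(\mathsf{U}^{(2)})^T\bigl(I + D_1^{(0)}\bigr)$ only the all-ones ($\tilde{\delta}_{1,3}$) part of $I+D_1^{(0)}$ from (\ref{Djminus0}) survives, giving exactly $\mathfrak{s}^{A(-2)} = -\int_\R (u_{0x}+v_0)\tilde{\delta}_{1,3}\,dx$ as in (\ref{sApm2p}), with your analogue of (\ref{exl1l2est}) handling the off-diagonal oscillatory factors just as in the paper. So the comparison is: your version costs an explicit cancellation computation (which you should write out rather than leave as ``the crux''), whereas the paper's factorization through $\tilde{\mathsf{U}}$ and $\mathcal{X}^A$ makes the pole structure manifest for free; both arrive at the same expansion and the same leading coefficient.
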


\subsection{Proof of Theorem \ref{r1r2th}}\label{r1r2subsec}
The theorem follows from Propositions \ref{sprop} and \ref{sAprop}. Indeed, recall from (\ref{r1r2def}) that $r_1 = s_{12}/s_{11}$ and $r_2 = s_{12}^A/s_{11}^A$. Statements $(a)$ and $(c)$ of Propositions \ref{sprop} and \ref{sAprop} imply that 
$s_{12}(k)$ and $s_{11}(k)$ are smooth on $(0,\infty)$, while $s_{12}^A(k)$ and $s_{11}^A(k)$ are smooth on $(-\infty,0)$. Since $s_{11}$ and $s_{11}^A$ have no zeros by Assumption \ref{solitonlessassumption}, it follows that $r_1 \in C^\infty((0,\infty))$ and $r_2 \in C^\infty((-\infty,0))$. Moreover, statement $(e)$ of the same propositions imply that $r_1(k)$ and $r_2(k)$ satisfy (\ref{r1r2rapiddecay}).
 
Assumption \ref{originassumption} implies that the coefficients $\mathfrak{s}^{(-2)}$ and $\mathfrak{s}^{A(-2)}$ in (\ref{spm2p}) and (\ref{sApm2p}) are both nonzero, ensuring that all four functions $s_{12}$, $s_{11}$, $s_{12}^A$, $s_{11}^A$ are of order $k^{-2}$ as $k \to 0$. 
Hence properties $(ii)$ and $(iii)$ of Theorem \ref{r1r2th} related to the behavior of $r_1$ and $r_2$ as $k \to 0$ follow from statement $(f)$ of Propositions \ref{sprop} and \ref{sAprop}.

It remains to prove that $|r_{1}(k)|<1$ for $k > 0$ and that $|r_{2}(k)| <1$ for $k < 0$.
For $k< 0$, all four entries $\{s_{ij}^{A}(k)\}_{i,j=1}^{2}$ are well-defined, and hence $s_{11}^{A}(k)s_{22}^{A}(k)-s_{12}^{A}(k)s_{21}^{A}(k) = s_{33}(k)$. Using also the symmetries $s^{A}(k) = \mathcal{B}\overline{s^{A}(\overline{k})}\mathcal{B}$ and $s(k) = \mathcal{A} s(\omega k)\mathcal{A}^{-1} $, we conclude that
\begin{align}\label{lol2}
1-|r_{2}(k)|^{2} = 1 - \bigg| \frac{s_{12}^{A}(k)}{s_{11}^{A}(k)} \bigg|^{2} = \frac{s_{11}^{A}(k)s_{22}^{A}(k)-s_{12}^{A}(k)s_{21}^{A}(k)}{|s_{11}^{A}(k)|^{2}} = \frac{s_{33}(k)}{|s_{11}^{A}(k)|^{2}} = \frac{s_{11}(\omega^{2}k)}{|s_{11}^{A}(k)|^{2}}
\end{align}
for $k < 0$. Since the left-hand side of \eqref{lol2} is real and tends to $1$ as $k \to -\infty$, and since the right-hand side is non-zero for all $k < 0$ by Assumption \ref{solitonlessassumption}, we deduce that $1-|r_{2}(k)|^{2}>0$ for all $k < 0$. An analogous argument shows that $|r_{1}(k)| <1$ for $k > 0$.

\section{The function $M$}\label{Msec}
In this section, we construct the sectionally analytic function $M$ which features in the RH problem \ref{RH problem for M}. The restriction of $M$ to the sector $D_n$, $n = 1,\dots, 6$, will be denoted by $M_n$.

\subsection{The eigenfunctions $M_n$}
For each $n = 1, \dots, 6$, we define a $3\times 3$-matrix valued solution $M_n(x,k)$ of (\ref{xpart}) for $k \in D_n\setminus \{0\}$ by the following system of Fredholm integral equations: 
\begin{align}\label{Mndef}
(M_n)_{ij}(x,k) = \delta_{ij} + \int_{\gamma_{ij}^n} \left(e^{(x-x')\widehat{\mathcal{L}(k)}} (\mathsf{U}M_n)(x',k) \right)_{ij} dx', \qquad  i,j = 1, 2,3,
\end{align}
where the contours $\gamma^n_{ij}$, $n = 1, \dots, 6$, $i,j = 1,2,3$, are defined by
 \begin{align} \label{gammaijndef}
 \gamma_{ij}^n =  \begin{cases}
 (-\infty,x),  & \re  l_i(k) < \re  l_j(k), 
	\\
(+\infty,x),  \quad & \re  l_i(k) \geq \re  l_j(k),
\end{cases} \quad \text{for} \quad k \in D_n.
\end{align}
The contours $\gamma_{ij}^n$ are defined in such a way that the exponential $e^{(l_i - l_j)(x-x')}$ appearing in the equation for $(M_n)_{ij}$ in (\ref{Mndef}) is bounded for $k \in D_n$ and $x' \in \gamma_{ij}^n$. The definition (\ref{Mndef}) of $M_n$ can be extended by continuity to the boundary of $D_n$. 
As the next proposition shows, this makes all entries of $M_n$ well-defined for $k \in \bar{D}_n\setminus \mathcal{Q}$, where 
\begin{align}\label{calQdef}
  \mathcal{Q} = \{0\} \cup \mathsf{Z}
\end{align}
and $\mathsf{Z}$ denotes the set of zeros of the Fredholm determinants associated with (\ref{Mndef}) (the intersection of $\mathsf{Z}$ with $\bar{D}_1$ is given by $\cup_{j=1}^3 \{k \in \bar{D}_1 | f_j(k) = 0\}$, where $f_j(k)$ are the Fredholm determinants given explicitly in (\ref{fjdef})).

\begin{proposition}[Basic properties of $M_n$]\label{Mnprop}
Suppose $u_0, v_0 \in \mathcal{S}(\R)$. 
Then the equations (\ref{Mndef}) uniquely define six $3 \times 3$-matrix valued solutions $\{M_n\}_1^6$ of (\ref{xpart}) with the following properties:
\begin{enumerate}[$(a)$]
\item The function $M_n(x, k)$ is defined for $x \in \R$ and $k \in \bar{D}_n \setminus \mathcal{Q}$. For each $k \in \bar{D}_n  \setminus \mathcal{Q}$, $M_n(\cdot, k)$ is smooth and satisfies (\ref{xpart}).

\item For each $x \in \R$, the function $M_n(x,\cdot)$ is continuous for $k \in \bar{D}_n \setminus \mathcal{Q}$ and analytic for $k \in D_n\setminus \mathcal{Q}$.

\item For each $\epsilon > 0$, there exists a $C = C(\epsilon)$ such that
\begin{align}\label{Mnbounded}
|M_n(x,k)| \leq C, \qquad x \in \R, \ k \in \bar{D}_n, \ \dist(k, \mathcal{Q}) \geq \epsilon.
\end{align}

\item For each $x \in \R$ and each $j = 1, 2, \dots$, the partial derivative $\frac{\partial^j M_n}{\partial k^j}(x, \cdot)$ has a continuous extension to $\bar{D}_n \setminus \mathcal{Q}$.

\item $\det M_n(x,k) = 1$ for $x \in \R$ and $k \in \bar{D}_n \setminus \mathcal{Q}$.

\item For each $x \in \R$, the sectionally analytic function $M(x,k)$ defined by $M(x,k) = M_n(x,k)$ for $k \in D_n$ satisfies the symmetries
\begin{align}\label{Msymm}
 M(x, k) = \mathcal{A} M(x,\omega k)\mathcal{A}^{-1} = \mathcal{B} \overline{M(x,\overline{k})}\mathcal{B}, \qquad k \in \C \setminus \mathcal{Q}.
\end{align}

\end{enumerate}
\end{proposition}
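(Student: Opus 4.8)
The plan is to analyze the Fredholm system \eqref{Mndef} one column at a time and then read off the six properties from standard analytic Fredholm theory. First I would fix a column index $j \in \{1,2,3\}$ and observe that the three equations for $\{(M_n)_{ij}\}_{i=1}^3$ close among themselves, since $(\mathsf{U}M_n)_{ij} = \sum_m \mathsf{U}_{im}(M_n)_{mj}$ couples only entries in the same column. Writing $\mu_j(x,k) = ((M_n)_{1j},(M_n)_{2j},(M_n)_{3j})^T$, the system takes the form $\mu_j = e_j + \mathcal{K}_j(k)\mu_j$, where $\mathcal{K}_j(k)$ is the integral operator with kernel $e^{(x-x')(l_i - l_j)}\mathsf{U}_{im}(x',k)$ on the contour $\gamma_{ij}^n$. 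By the very definition \eqref{gammaijndef} of $\gamma_{ij}^n$, the factor $e^{(x-x')(l_i-l_j)}$ is bounded on the relevant contour for $k \in \bar D_n$, and since $\mathsf{U}(\cdot,k) \in \mathcal{S}(\R)$ for $k \neq 0$, the operator $\mathcal{K}_j(k)$ is a bounded (in fact compact) operator on $L^\infty(\R,\C^3)$ depending analytically on $k \in D_n$ and continuously on $k \in \bar D_n \setminus \{0\}$; this is the framework used in \cite{DT1979, L3x3, HLNonlinearFourier}.

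Second, I would apply analytic Fredholm theory to $I - \mathcal{K}_j(k)$. The Fredholm determinants $f_j(k)$ associated with $\mathcal{K}_j(k)$ are analytic in $D_n$ and continuous up to $\bar D_n \setminus \{0\}$, and their zero set is precisely the part of $\mathcal{Z}$ lying in $\bar D_n$; recall $\mathcal{Q} = \{0\} \cup \mathcal{Z}$ from \eqref{calQdef}. Thus $(I - \mathcal{K}_j(k))^{-1}$ exists for $k \in \bar D_n \setminus \mathcal{Q}$, yielding unique solvability and hence defining $M_n$. Analyticity of the resolvent in $D_n \setminus \mathcal{Q}$ gives property $(b)$, while continuity up to $\bar D_n \setminus \mathcal{Q}$ together with uniform resolvent bounds on compact subsets away from $\mathcal{Q}$ gives the bound $(c)$ of \eqref{Mnbounded}. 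Property $(a)$ — smoothness in $x$ and the fact that $M_n$ solves \eqref{xpart} — follows by differentiating the integral equation in $x$; property $(d)$ follows by differentiating in $k$, since $\partial_k^p M_n$ solves a Fredholm equation with the same operator $I - \mathcal{K}_j(k)$ and an inhomogeneity built from lower-order $k$-derivatives, so its continuous extension to $\bar D_n \setminus \mathcal{Q}$ follows inductively.

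For the determinant $(e)$ I would pass to $\hat M_n(x,k) := M_n(x,k)e^{x\mathcal{L}(k)}$, which by \eqref{Lax pair hat} satisfies $\partial_x \hat M_n = L \hat M_n$ with $L = P^{-1}\tilde L P$ traceless. Liouville's formula gives $\partial_x \det \hat M_n = (\tr L)\det \hat M_n = 0$, and since $\tr \mathcal{L} = (\omega + \omega^2 + 1)k = 0$ we have $\det M_n = \det \hat M_n$ independent of $x$; letting $x \to +\infty$, the entries $(ij)$ with $\re l_i \geq \re l_j$ (normalized at $+\infty$) tend to $\delta_{ij}$ while those with $\re l_i < \re l_j$ tend to finite constants, so in the ordering of $\{\re l_i\}$ on $D_n$ the limit of $M_n$ is a unipotent triangular matrix and $\det M_n = 1$. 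For the symmetries $(f)$ I would use that $\mathcal{L}$ and $\mathsf{U}$ obey $F(k) = \mathcal{A}F(\omega k)\mathcal{A}^{-1} = \mathcal{B}\overline{F(\bar k)}\mathcal{B}$ (as noted in the proof of Proposition \ref{XYprop}). Conjugating \eqref{Mndef} by $\mathcal{A}$ and substituting $k \mapsto \omega k$ transforms the defining system in the sector containing $\omega k$ into that in the sector containing $k$; the contour assignments \eqref{gammaijndef} match because $l_j(\omega k) = l_{j+1}(k)$, so the row/column permutation induced by $\mathcal{A}$ is exactly the relabeling of the ordering of $\{\re l_i\}$. Uniqueness of the Fredholm solution then forces $M(x,k) = \mathcal{A}M(x,\omega k)\mathcal{A}^{-1}$, and the analogous argument with $\mathcal{B}$ and $k \mapsto \bar k$ (using $\overline{l_j(\bar k)} = l_{\mathcal{B}(j)}(k)$, where $\mathcal{B}$ acts as the transposition of the indices $1$ and $2$) gives the second identity in \eqref{Msymm}.

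The main obstacle I anticipate is the Fredholm step itself: unlike the Volterra equations \eqref{XYdef} for $X$ and $Y$, the equations \eqref{Mndef} mix contours running to $+\infty$ and to $-\infty$, so they are genuine Fredholm equations whose solvability can fail on $\mathcal{Z}$. Establishing the analytic dependence of $(I-\mathcal{K}_j(k))^{-1}$ on $k$, together with continuity up to the boundary of $D_n$ — where the exponential factors are merely oscillatory rather than decaying — and the uniform bound \eqref{Mnbounded}, requires the careful estimates afforded by $\mathsf{U} \in \mathcal{S}(\R)$ and is the technical heart of the proof.
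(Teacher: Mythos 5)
Your proposal is correct and its architecture coincides with the paper's proof: the system (\ref{Mndef}) decouples column by column into genuine Fredholm equations, the exceptional set $\mathcal{Q}$ is $\{0\}$ together with the zeros of the associated Fredholm determinants, properties $(a)$--$(d)$ are read off from the resolvent representation, $(e)$ follows from $x$-independence of the determinant plus the unipotent triangular limit as $x \to +\infty$, and $(f)$ follows from the symmetry-invariance of the defining equations together with uniqueness. The one genuine difference is the machinery underlying the solvability step. You invoke compactness of $\mathcal{K}_j(k)$ on $L^\infty$ and the abstract analytic Fredholm theorem; the paper instead stresses that standard theory does not immediately apply because the kernel (\ref{Kjkernel}) is not of $L^2$ type (it decays in $x'$ but, for $k$ with $\re(l_i - l_j) = 0$, merely oscillates in $x$), and it therefore follows Caudrey \cite{C1982}: the Fredholm determinant $f(k)$ and minor $F(x,x',k)$ are built as explicit series, estimated via Hadamard's inequality, shown analytic in $D_1$ and continuous up to $\bar{D}_1 \setminus \{0\}$, and the solution is exhibited by the formula (\ref{wifinal}). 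Your compactness claim is plausible (on $C_b(\R)$ it can be proved from equicontinuity plus uniform tail control, the oscillatory boundary pieces contributing only a finite-dimensional family), but it is not off-the-shelf, and even granting it, the abstract theorem yields meromorphic invertibility only in the open sector: continuity of $M_n$ and its $k$-derivatives up to $\partial D_n$, the uniform bound (\ref{Mnbounded}) including the regime $k \to \infty$, and the identification of the non-invertibility set with the zero set of the concrete determinants $f_j(k)$ in (\ref{fjdef}) --- which is how the paper actually defines $\mathcal{Z}$, hence $\mathcal{Q}$ --- still require essentially the explicit estimates you deferred to your final paragraph. So your route buys brevity and a familiar abstract framework, while the paper's route delivers exactly the boundary behavior, uniformity, and determinant-based description of $\mathcal{Q}$ that the proposition asserts. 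Two minor points: your Liouville-formula argument for $(e)$ (via $\hat{M}_n = M_n e^{x\mathcal{L}}$ and $\tr L = 0$) is slightly cleaner than the paper's, which first proves invertibility of $M_1$ for all $x$ and then differentiates $\log \det M_1$; and as $x \to +\infty$ the above-diagonal entries need only remain bounded (for boundary $k$ they oscillate and need not converge to constants), which is all the determinant argument requires.
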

\begin{proof}
Let us first consider the third column of $M_1$.
Define $H(x)$ by $H(x) = 1$ for $x > 0$ and $H(x) = 0$ for $x \leq 0$. 
Letting $w_i(x,k) = (M_1)_{i3}(x,k)$, we can write the third column of (\ref{Mndef}) as
\begin{align} \nonumber
&  w_i(x,k) = \delta_{i3} + \int_{-\infty}^{+\infty}  \sum_{l = 1}^3 K(x,x',k)_{il}w_l(x',k) dx', 
  	\\\label{wieq} 
&\hspace{6cm} x \in \R, \ k \in \bar{D}_1\setminus\{0\}, \ i = 1,2,3.
\end{align}  
where the kernel $K$ is defined for $x,x' \in \R$, $k \in \bar{D}_1\setminus\{0\}$, $i,l = 1,2,3$ by
\begin{align}\label{Kjkernel}
K(x,x', k)_{il} = \begin{cases} 
H(x-x')e^{(x - x')(l_i - l_3)} \mathsf{U}(x',k)_{il} & \text{if} \ \gamma_{i3}^1 = (-\infty,x),
	\\
-H(x' - x)e^{(x - x')(l_i - l_3)} \mathsf{U}(x',k)_{il} & \text{if} \ \gamma_{i3}^1 = (\infty,x).
\end{cases}	
\end{align}

Equation (\ref{wieq}) is a Fredholm equation of the second kind. However, the standard Fredholm theory does not immediately apply, because the integral kernel $K$ in (\ref{wieq}) is, in general, not an $L^2$-kernel. 
Indeed, for $k$ such that $ \re (l_i- l_3) =  0$, the exponential factor in (\ref{Kjkernel}) is bounded, but does not decay as $x,x' \to \pm \infty$.
This means that the kernel in (\ref{wieq}) decays as $|x'| \to \infty$, but not necessarily as $|x| \to \infty$. 
Even though the kernel $K_j$ is not of $L^2$-type, equation (\ref{wieq}) can be analyzed by an extension of the standard Fredholm theory, see \cite{C1982}. The remainder of the proof is a minor generalization of the arguments of Appendix A of \cite{C1982}, allowing for a more general $k$-dependence.

Fix $\epsilon >0$ small.
Let $\bar{D}_1^\epsilon$ denote the set $\bar{D}_1$ with open disks of radius $\epsilon$ centered at the origin removed, i.e.,
$$\bar{D}_1^\epsilon = \bar{D}_1 \setminus \{|k|< \epsilon \}.$$
The exponential factor in (\ref{Kjkernel}) is bounded for $k \in \bar{D}_1^\epsilon$. Also, there is a function $b_1 \in \mathcal{S}(\R)$ such that
$$|\mathsf{U}(x,k)| \leq b_1(x), \qquad x \in \R, \ k \in \bar{D}_1^\epsilon.$$
We infer that there exists a function $b \in \mathcal{S}(\R)$ such that
\begin{align}\label{Kmbound}
|K(x,x',k)_{il}| \leq b(x'), \qquad x, x' \in \R, \ k \in \bar{D}_1^\epsilon, \ i,l = 1, 2,3.
\end{align}
Set $K^{(0)} := 1$ and define the complex-valued function $K^{(m)}$ for $m \geq 1$ by
\begin{align}\label{Kmdef}
K^{(m)}\left(\substack{x_1, i_1, x_2, i_2, \dots, x_m, i_m \\ x_1', i_1', x_2', i_2', \cdots, x_m', i_m'}; k\right)
= \det\begin{pmatrix} K(x_1,x_1', k)_{i_1i_1'} & \cdots & K(x_1,x_m', k)_{i_1i_m'}  \\
\vdots &  & \vdots \\
 K(x_m,x_1', k)_{i_mi_1'} & \cdots & K(x_m,x_m', k)_{i_mi_m'}  \\
\end{pmatrix}.
\end{align}
Hadamard's inequality for an $m\times m$ matrix $A$,
$$|\det A|^2 \leq \prod_{i=1}^m \sum_{j=1}^m |A_{ij}|^2,$$
together with the bound (\ref{Kmbound}) gives
\begin{align}\label{Kmestimate}
\left|K^{(m)}\left(\substack{x_1, i_1, x_2, i_2, \dots, x_m, i_m \\ x_1', i_1', x_2', i_2', \dots, x_m', i_m'}; k\right)\right|
\leq m^{m/2}\prod_{j=1}^m b(x_j'), \qquad k \in \bar{D}_1^\epsilon, \ m = 1,2,\dots.
\end{align}
The Fredholm determinant $f(k)$ and the Fredholm minor $F(x,x',k)$ associated with equation (\ref{wieq}) are defined by
\begin{align}\label{Fredholmfdef}
& f(k) = \sum_{m=0}^\infty f^{(m)}(k), \qquad  k \in \bar{D}_1^\epsilon,
	\\\label{FredholmFdef}
& F(x,x',k)_{ii'} = \sum_{m=0}^\infty F^{(m)}(x,x',k)_{ii'}, \qquad x, x' \in \R, \ k \in \bar{D}_1^\epsilon, \ i,i' = 1, 2, 3, 
\end{align}
where $f^{(m)}$ and $F^{(m)}$ are defined for $m \geq 0$ by\footnote{For $m = 0$ these definitions should be interpreted as 
$$f^{(0)}(k) = 1, \qquad F^{(0)}(x,x',k)_{ii'} = K^{(1)}\left(\substack{x, i \\ x', i'}; k\right) 
= K(x,x', k)_{ii'}.$$}
\begin{align}\nonumber
 f^{(m)}(k) = &\; \frac{(-1)^m}{m!}\sum_{i_1, i_2, \dots, i_m=1}^3 \int_{\R^m} K^{(m)}
\left(\substack{x_1, i_1, x_2, i_2, \dots, x_m, i_m \\
x_1, i_1, x_2, i_2, \dots, x_m, i_m}; k\right) dx_1dx_2 \cdots dx_m,
	\\\nonumber
 F^{(m)}(x,x',k)_{ii'} = &\; \frac{(-1)^m}{m!}\sum_{i_1, i_2, \dots, i_m=1}^3 
	\\\label{FredholmFmdef}
& \times \int_{\R^m} 
K^{(m+1)}\left(\substack{x, i, x_1, i_1, x_2, i_2, \dots, x_m, i_m \\
x', i', x_1, i_1, x_2, i_2, \dots, x_m, i_m}; k\right) dx_1dx_2 \cdots dx_m.
\end{align}
In view of (\ref{Kmestimate}), we have
\begin{align}\label{fmkestimate}
|f^{(m)}(k)| \leq \frac{3^mm^{m/2}\|b\|_{L^1(\R)}^m}{m!}, \qquad k \in \bar{D}_1^\epsilon, \ m \geq 0.
\end{align}
Using Stirling's approximation $m! \sim \sqrt{2\pi m}(m/e)^m$, we see that the series in (\ref{Fredholmfdef}) converges absolutely and uniformly for $k \in \bar{D}_1^\epsilon$. For each $m$, $f^{(m)}(k)$ is a continuous function of $k \in \bar{D}_1^\epsilon$ which is analytic in the interior of $\bar{D}_1^\epsilon$. This shows that the Fredholm determinant $f(k)$ is a bounded continuous function of $k \in \bar{D}_1^\epsilon$ which is analytic in the interior of $\bar{D}_1^\epsilon$. Similarly, the estimate
\begin{align}\label{Fmxxkestimate}
|F^{(m)}(x,x',k)_{ii'}| \leq \frac{3^m(m+1)^{(m+1)/2}\|b\|_{L^1(\R)}^mb(x')}{m!}, \qquad x,x' \in \R, \ k \in \bar{D}_1^\epsilon, \ m \geq 0,
\end{align}
shows that the Fredholm minor $F(x,x',k)$ has the following properties: $(i)$ For each $(x,x') \in \R^2$, $F(x,x',k)$ is a bounded continuous function of $k \in \bar{D}_1^\epsilon$ which is analytic in the interior of $\bar{D}_1^\epsilon$; $(ii)$ For each $k \in \bar{D}_1^\epsilon$, $F(x,x',k)$ is a smooth function of  $(x, x') \in \R^2 \setminus \{x = x'\}$ (since $H(x - x')$ has a discontinuity at $x = x'$); $(iii)$ $F$ obeys the estimate
\begin{align}\label{FredholmFbound}
|F(x,x',k)| \leq Cb(x'), \qquad x,x' \in \R, \ k \in \bar{D}_1^\epsilon.
\end{align}

Expanding the determinant in (\ref{Kmdef}) along the first column, we find, for $m \geq 0$,
\begin{align*}
& K^{(m+1)}\left(\substack{x, i, x_1, i_1, \dots, x_m, i_m \\
x', i', x_1, i_1, \dots, x_m, i_m}; k\right)
=  K(x,x',k)_{ii'} K^{(m)}
\left(\substack{x_1, i_1, \dots, x_m, i_m \\
x_1, i_1, \dots, x_m, i_m}; k\right)
	\\
&\hspace{2cm} - \sum_{s=1}^m K(x_s, x',k)_{i_si'}K^{(m)}
\left(\substack{x, i, x_1, i_1, \dots, x_{s-1}, i_{s-1},x_{s+1}, i_{s+1}, \dots, x_m, i_m \\
x_s, i_s, x_1, i_1, \dots, x_{s-1}, i_{s-1},x_{s+1}, i_{s+1}, \dots x_m, i_m}; k\right).
\end{align*}
Substituting this identity into (\ref{FredholmFmdef}) and simplifying, we obtain
\begin{align*}
F^{(m)}(x,x',k)_{ii'} = f^{(m)}(k)K(x,x',k)_{ii'} + \sum_{i_{s}=1}^{3} \int_{\mathbb{R}} F^{(m-1)}(x,x'',k)_{ii_{s}}K(x'',x',k)_{i_{s}i'}dx'',
\end{align*}
or, in matrix-form,
$$F^{(m)}(x,x',k) = f^{(m)}(k)K(x,x',k) + \int_\R F^{(m-1)}(x, x'',k) K(x'', x',k) dx'', \qquad m \geq 0,$$
where $F^{(-1)} := 0$. Summing this equation from $m=0$ to $m = +\infty$, we find, for $x, x' \in \R$ and $k \in \bar{D}_1^\epsilon$,
\begin{subequations}\label{FfK}
\begin{align}
F(x,x',k) = f(k)K(x,x',k) + \int_\R F(x, x'', k) K(x'', x',k) dx''.
\end{align}
If we instead expand the determinant in (\ref{Kmdef}) along the first row, the same type of argument leads to
\begin{align}
F(x,x',k) = f(k) K(x,x',k) + \int_{\R} K(x,x'',k) F(x'', x', k) dx''.
\end{align}
\end{subequations}
The identities in (\ref{FfK}) show that 
$$R(x,x',k) = \frac{F(x,x',k)}{f(k)}$$
satisfies the resolvent equations
\begin{align*}
R(x,x',k) - K(x,x',k) & = \int_{\R} R(x,x'',k) K(x'',x',k) dx''
	\\
& = \int_{\R} K(x,x'',k)R(x'', x',k) dx''
\end{align*}
for $x, x' \in \R$ and $k \in \bar{D}_1^\epsilon\setminus \mathcal{Q}$. 
This shows that (\ref{wieq}) has a unique solution given by
\begin{align}\label{wifinal}
w_i(x,k) = \delta_{i3} + \frac{1}{f(k)} \int_\R F(x,x',k)_{i3} dx', \qquad x \in \R, \ k \in \bar{D}_1^\epsilon \setminus \mathcal{Q}.
\end{align}
Using the properties of $f(k)$ and $F(x,x',k)$  and the fact that $\epsilon > 0$ was arbitrary, it follows from this representation that the third column of $M_1$ satisfies $(a)$-$(d)$. The proofs of $(a)$-$(d)$ for the first and second columns of $M_1$ are similar.

Letting $x \to \infty$ in (\ref{Mndef}) and using (\ref{Mnbounded}), we find that $\lim_{x \to \infty} M_1(x,k)_{ij} = \delta_{ij}$ for $(i,j)$ such that $\gamma_{ij}^1 = (\infty,x)$, and that $M_1(x,k)_{ij}$ remains bounded as $x \to \infty$ for $(i,j)$ such that $\gamma_{ij}^1 = (-\infty,x)$. In other words, the entries of $M_1$ above the diagonal remain bounded as $x \to \infty$, whereas the part of $M_1$ on and below the diagonal approaches the identity matrix:
\begin{align}\label{asymp for M1 at +infty}
M_{1}(x,k) \to \begin{pmatrix}
1 & \star & \star \\
0 & 1 & \star \\
0 & 0 & 1
\end{pmatrix} \qquad \mbox{as } x \to +\infty.
\end{align}
In particular, for each $k \in \bar{D}_{1}\setminus \mathcal{Q}$, $M_{1}(x,k)$ is invertible for all sufficiently large $x$. From standard theory of ODEs, we conclude that $M_{1}(x,k)$ is invertible for all $x \in \mathbb{R}$. Since $M_1(\cdot, k)$ is a smooth solution of (\ref{xpart}), we infer that
\begin{align*}
(\log \det M_{1}(x,k))_{x} = \tr (M_{1}^{-1}M_{x}) = \tr \mathsf{U} = 0,
\end{align*}
from which we conclude that $\det M_1(x,k)$ is independent of $x$. Using \eqref{asymp for M1 at +infty} again, we find $\det M_1(x,k) = 1$ for all $x \in \mathbb{R}$ and $k \in \bar{D}_{1}\setminus \mathcal{Q}$, which proves $(e)$ for $n =1$.

The Fredholm equations (\ref{Mndef}) are consistent with the symmetries in (\ref{Msymm}), because $\mathcal{L}(k)$ and $\mathsf{U}(x,k)$ obey these symmetries. Hence we can construct the unique solutions $M_n$ of (\ref{Mndef}) for $n = 2, \dots, 6$ in the same way as we constructed $M_1$. By uniqueness, these $M_n$ will satisfy the symmetries in (\ref{Msymm}).
\end{proof}

\begin{remark}\label{fredholmremark}
In the proof of Proposition \ref{Mnprop}, we focused on the third column of $M_1$ to avoid an abundance of indices. More generally, the Fredholm determinant associated with the $j$th column of $M_1$ is defined by
\begin{align}\nonumber
f_j(k) = & \; \sum_{m=0}^\infty \frac{(-1)^m}{m!}\sum_{i_1, i_2, \cdots, i_m=1}^3 
	\\ \label{fjdef}
& \times \int_{\R^m} \det\begin{pmatrix} K_j(x_1,x_1, k)_{i_1i_1} & \cdots & K_j(x_1,x_m, k)_{i_1i_m} \\
\vdots &  & \vdots \\
 K_j(x_m,x_1, k)_{i_mi_1} & \cdots & K_j(x_m,x_m, k)_{i_mi_m} \\
\end{pmatrix}dx_1dx_2 \cdots dx_m,
\end{align}
where
\begin{align}\label{Kjkerneldef}
K_j(x,x',k)_{il} = \begin{cases} 
H(x-x')e^{(l_i - l_j)(x - x')} \mathsf{U}(x',k)_{il} & \text{if} \ \gamma_{ij}^1 = (-\infty,x),
	\\
-H(x' - x)e^{(l_i - l_j)(x - x')} \mathsf{U}(x',k)_{il} & \text{if} \ \gamma_{ij}^1 = (\infty,x).
\end{cases}	
\end{align}
Each function $f_j(k)$ is an analytic function of $k \in D_1$ with a continuous extension to $\bar{D}_1 \setminus \{0\}$. Since it does not vanish identically (in fact, $f_j \to 1$ as $k \to \infty$, because $\mathsf{U}(x,k)$ is $O(1/k)$ as $k \to \infty$), it has at most countably many zeros in $D_1$. 
\end{remark}

As in the case of $X$ and $Y$, the asymptotics of $M_n$ as $k \to \infty$ can be obtained by considering formal power series solutions of (\ref{xpart}). The formal solutions take the form
\begin{align*}
& M_{n,formal}(x,k) = I + \frac{M_{n,1}(x)}{k} + \frac{M_{n,2}(x)}{k^2} + \cdots, \qquad n = 1, \dots, 6,
\end{align*}
with the normalization conditions
\begin{align}\label{Mnlnormalization}
\begin{cases}
\displaystyle{\lim_{x\to -\infty}} (M_{n,l}(x))_{ij} = 0 & \text{if} \ \gamma_{ij}^n = (-\infty,x), 
	\\
\displaystyle{\lim_{x\to \infty}} (M_{n,l}(x))_{ij} = 0 & \text{if} \ \gamma_{ij}^n = (\infty,x),
\end{cases} \quad l \geq 1.
\end{align}
The coefficients $M_{n,j}$ are uniquely determined from (\ref{Mnlnormalization}), the recursive relations (\ref{xrecursive}), and the initial assignments $M_{n,-2} = 0$, $M_{n,-1} = 0$, $M_{n,0} = I$.
In fact, since $\gamma_{ii}^n = (\infty,x)$ for $i = 1,2,3$ and $n = 1, \dots,6$, it follows that $M_{n,j}(x) = X_j(x)$ for all $n$ and $j$.

\begin{lemma}[Asymptotics of $M$ as $k \to \infty$]\label{Matinftylemma}
Suppose $u_0, v_0 \in \mathcal{S}(\R)$ and $u_0,v_0 \not\equiv 0$. Given an integer $p \geq 1$, let $X_{(p)}(x,k)$ be the function defined in (\ref{Xpdef}). Then there exists an $R > 0$ such that
\begin{align*}
& \big|M(x,k) - X_{(p)}(x,k) \big| \leq
\frac{C}{|k|^{p+1}}, \qquad x \in \R, \  k \in \C \setminus \Gamma, \ |k| \geq R.
\end{align*}
\end{lemma}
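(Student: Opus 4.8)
The plan is to work sector by sector. Since $\C \setminus \Gamma = \cup_{n=1}^6 D_n$ and $M = M_n$ on $D_n$, and since the six sectors are permuted by the symmetries of Proposition \ref{Mnprop}$(f)$, it suffices to bound $M_n - X_{(p+1)}$ on $\bar{D}_n$ for a single $n$, say $n=1$, and then to combine this with the trivial estimate $X_{(p+1)} - X_{(p)} = X_{p+1}/k^{p+1} = O(k^{-p-1})$. Passing from $X_{(p)}$ to the one-higher truncation $X_{(p+1)}$ is precisely what buys the extra power of $1/k$ in the final bound. First I would fix $R$ so large that, by Remark \ref{fredholmremark} (the Fredholm determinants $f_j(k) \to 1$ as $k \to \infty$), none of the $f_j$ vanishes for $|k| \geq R$; then $\mathcal{Q} \cap \{|k| \geq R\} = \emptyset$ and, by Proposition \ref{Mnprop}$(c)$, $M_1$ is defined and bounded uniformly in $x \in \R$ and in $k \in \bar{D}_1$ with $|k| \geq R$.

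The key structural input is that the formal coefficients $X_l$, $l \geq 1$, have exactly the boundary behaviour demanded of the entries of $M_1$ by the contours $\gamma_{ij}^1$. Indeed, from the recursion (\ref{xrecursive}) one checks by induction that the off-diagonal entries of each $X_l$ are built algebraically out of $\mathsf{U}$ and its derivatives, the diagonal entries of the lower $X$'s entering only through $\partial_x$ and hence again through $\mathsf{U}$; thus $(X_l)_{ij} \to 0$ as $x \to \pm\infty$ for $i \neq j$, in fact with Schwartz decay, while the diagonal entries tend to $0$ only as $x \to +\infty$ by the normalization (\ref{Fjnormalization}). Consequently $X_{(p+1)}$ realizes precisely the limits prescribed in (\ref{Mnlnormalization}), and solving the component equations $\partial_x (X_{(p+1)})_{ij} - (l_i - l_j)(X_{(p+1)})_{ij} = (\mathsf{U} X_{(p+1)})_{ij} + (E_{p+1})_{ij}$ by integrating from the endpoint of $\gamma_{ij}^1$ shows that $X_{(p+1)}$ satisfies the very same integral equation (\ref{Mndef}) as $M_1$, but with an extra inhomogeneity coming from the residual $E_{p+1} := \partial_x X_{(p+1)} - [\mathcal{L}, X_{(p+1)}] - \mathsf{U} X_{(p+1)}$. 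The recursion (\ref{xrecursive}) is designed so that every coefficient of $k^{-m}$ in $E_{p+1}$ with $m \leq p$ vanishes (both the diagonal and the off-diagonal parts); hence $E_{p+1} = O(k^{-p-1})$, its leading term being the off-diagonal $[J, X_{p+2}]\,k^{-p-1}$, and each entry of $E_{p+1}$ is a finite sum of negative powers $k^{-m}$, $m \geq p+1$, times Schwartz functions of $x$, so that $\|E_{p+1}(\cdot,k)\|_{L^1(\R)} = O(k^{-p-1})$.

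Subtracting the integral equation for $X_{(p+1)}$ from (\ref{Mndef}) for $M_1$, the unknown $N := M_1 - X_{(p+1)}$ satisfies a linear integral equation with the \emph{same} kernel as (\ref{Mndef}) and inhomogeneity $g_{ij}(x,k) = -\int_{\gamma_{ij}^1} \big(e^{(x-x')\widehat{\mathcal{L}(k)}} E_{p+1}(x',k)\big)_{ij}\, dx'$. Because the exponential factor is bounded on $\gamma_{ij}^1$ for $k \in \bar{D}_1$, we get $\|g(\cdot,k)\|_{\infty} \leq C\|E_{p+1}(\cdot,k)\|_{L^1(\R)} = O(k^{-p-1})$, uniformly in $x$. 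For $|k| \geq R$ the resolvent of the integral operator in (\ref{Mndef}) — the same one constructed in the proof of Proposition \ref{Mnprop} via the Fredholm minor representation (\ref{wifinal}), with kernel bounded by $Cb(x')/|f_j(k)|$ for some $b \in \mathcal{S}(\R)$ — is bounded uniformly in $x$ and $k$, since each $f_j(k)$ is bounded away from zero there. Inverting therefore yields $|N(x,k)| \leq C'\|g(\cdot,k)\|_{\infty} = O(k^{-p-1})$, uniformly in $x \in \R$ and $k \in \bar{D}_1$ with $|k| \geq R$. Combining over the six sectors (using the symmetries) and with $X_{(p+1)} - X_{(p)} = O(k^{-p-1})$ gives the stated estimate. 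This mirrors the quotient argument already used for $X$ and $Y$ in Proposition \ref{XYprop2}.

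The hard part will be the bookkeeping around the mixed contours: one must verify carefully that the formal coefficients $X_l$ decay at exactly the right ends ($x \to \pm\infty$ off the diagonal, $x \to +\infty$ on it) so that $X_{(p+1)}$ genuinely solves the \emph{same} integral equation as $M_1$ with no leftover boundary contributions, leaving $E_{p+1}$ as the only inhomogeneity; this decay is what makes the off-diagonal source integrable and controllable. The second delicate point is to confirm that the resolvent bound obtained from the non-$L^2$ Fredholm theory of \cite{C1982} underlying Proposition \ref{Mnprop} is uniform in both $x$ and $k$ for $|k| \geq R$; granting this, the bound on $N$ follows immediately from the smallness of $g$. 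Everything else reduces to the standard Volterra/Fredholm estimates already carried out in Sections above.
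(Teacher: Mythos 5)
Your proof is correct, but it deploys the Fredholm machinery differently from the paper, and the difference is worth noting. The paper compares $M$ with $M_{(p)}:=X_{(p+1)}$ \emph{multiplicatively}: it chooses $R$ so large that $M_{(p)}$ is invertible with uniformly bounded inverse, shows that the quotient $M_{(p)}^{-1}M$ satisfies a \emph{new} Fredholm equation of the form (\ref{Mndefinfty}) whose kernel is built from $\Delta = (\mathsf{U}M_{(p)}+[\mathcal{L},M_{(p)}]-\partial_x M_{(p)})M_{(p)}^{-1}$ (essentially your $-E_{p+1}M_{(p)}^{-1}$) and is therefore $O(|k|^{-p-1})$ with Schwartz decay in $x'$; the associated Fredholm determinant is then automatically $1+O(|k|^{-p-1})$, and the solution representation immediately yields the bound, with no information needed about the resolvent of the original equation (\ref{Mndef}). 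You instead compare \emph{additively}: you verify that $X_{(p+1)}$ solves the same integral equation (\ref{Mndef}) up to the small inhomogeneity $g=O(|k|^{-p-1})$ coming from the residual $E_{p+1}$, so that $N=M_1-X_{(p+1)}$ solves the \emph{original} equation with small source, and you then invert using the resolvent $F/f$ of the original kernel, which is uniformly bounded for $|k|\ge R$ because $f_j(k)\to 1$ as $k\to\infty$ (Remark \ref{fredholmremark}) and $|F(x,x',k)|\le Cb(x')$ uniformly (as in (\ref{FredholmFbound})). The trade-off: your route avoids any invertibility condition on the truncation $X_{(p+1)}$, but requires the extra ingredients that (i) the original Fredholm determinants are uniformly bounded away from zero at large $|k|$ (which does follow from the $O(1/k)$ kernel bound, uniformly in direction), and (ii) uniqueness of bounded solutions of the inhomogeneous equation, so that $N$ equals the resolvent formula applied to $g$ (this follows from the resolvent identities, as in the paper's homogeneous-equation argument in Lemma \ref{Mlaxlemma}). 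Both routes rest on the same two pillars you correctly identify as the crux: the decay bookkeeping (\ref{recursive prop on Xj}) ensuring that the formal coefficients match the endpoint normalizations (\ref{Mnlnormalization}) of the contours $\gamma_{ij}^1$, and the fact that the truncation residual consists only of powers $k^{-m}$, $m\ge p+1$, with Schwartz coefficients (leading term $[J,X_{p+2}]k^{-p-1}$). One small remark: you describe your argument as mirroring ``the quotient argument'' of Proposition \ref{XYprop2}, but what you actually carry out is a difference argument, which is precisely where you diverge from the paper's proof; the sector-reduction via the $\mathcal{A}$ and $\mathcal{B}$ symmetries (which $X_{(p)}$ inherits from the coefficient relations $X_j=\omega^{-j}\mathcal{A}X_j\mathcal{A}^{-1}=\mathcal{B}\overline{X_j}\mathcal{B}$) is a clean alternative to the paper's ``the other cases are similar.''
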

\begin{proof}
Let $M_{(p)} := X_{(p+1)}$. It is enough to show that there exists an $R > 0$ such that
\begin{align}\label{Matinfty}
& \big|M(x,k) - M_{(p)}(x,k) \big| \leq
\frac{C}{|k|^{p+1}}, \qquad x \in \R, \  k \in \C \setminus \Gamma, \ |k| \geq R.
\end{align}
By induction on \eqref{xrecursive}, one shows easily that
\begin{align}\label{recursive prop on Xj}
X_{j}^{(o)} \in \mathcal{S}(\mathbb{R}), \qquad \partial_{x}X_{j}^{(d)} \in \mathcal{S}(\mathbb{R}), \qquad j \geq 1.
\end{align}
For $n = 1, \dots, 6$ and $R> 1$, let $\bar{D}_n^R = \bar{D}_n \cap \{|k| \geq R\}$.
We choose $R > 1$ sufficiently large such that
\begin{align*}
\sup_{|k| \geq R} \sup_{x \in \R} \bigg|\sum_{j=1}^{p+1} \frac{X_j(x)}{k^j}\bigg| < 1.
\end{align*}
Then the inverse $M_{(p)}^{-1}$ exists and $M_{(p)}, M_{(p)}^{-1}$ are uniformly bounded for $x \in \R$ and $|k| \geq R$. 
Define $L(x,k)$ and $L_{(p)}(x,k)$ by 
$$L = \mathcal{L} + \mathsf{U}, \qquad L_{(p)} = \big(\partial_x M_{(p)} + M_{(p)} \mathcal{L}\big)M_{(p)}^{-1}$$
and let $\Delta = L - L_{(p)}$ denote their difference.
The quotient $M_{(p)}^{-1} M$ satisfies the equation
\begin{align*}
  (M_{(p)}^{-1}M)_x & = - M_{(p)}^{-1}(\partial_xM_{(p)}) M_{(p)}^{-1} M + M_{(p)}^{-1} \partial_xM
  	\\
&  = - M_{(p)}^{-1}(L_{(p)}M_{(p)} - M_{(p)}\mathcal{L}) M_{(p)}^{-1} M + M_{(p)}^{-1} (LM -M\mathcal{L})
	\\
&  = M_{(p)}^{-1}\Delta M + [\mathcal{L}, M_{(p)}^{-1}M],
\end{align*}
that is,
$$\Big(e^{-x\hat{\mathcal{L}}}(M_{(p)}^{-1}M)\Big)_x = e^{-x\hat{\mathcal{L}}}(M_{(p)}^{-1} \Delta M).$$
The entries $(M_{(p)})_{ij}$ and $(M_n)_{ij}$ (and hence also the entries $(M_{(p)}^{-1}M)_{ij}$) approach  $\delta_{ij}$ as $x \to \pm \infty$ for $\gamma_{ij}^n = (\pm \infty,x)$. We conclude that $M$ satisfies the Fredholm equation
\begin{align*}
(M_{(p)}^{-1}M_n)(x,k)_{ij} = \delta_{ij} + \int_{\gamma_{ij}^n} \Big(e^{(x-x')\hat{\mathcal{L}}} (M_{(p)}^{-1}\Delta M_n)(x',k) \Big)_{ij} dx',
\end{align*}
that is,
\begin{align}\label{Mndefinfty}
M_n(x,k)_{ij} = M_{(p)}(x,k)_{ij} + \sum_{s=1}^3 M_{(p)}(x,k)_{is}\int_{\gamma_{sj}^n} \Big(e^{(x-x')\hat{\mathcal{L}}} (M_{(p)}^{-1}\Delta M_n)(x',k) \Big)_{sj} dx'
\end{align}
for $x \in \R$ and $k \in \bar{D}_n^R$. 
For definiteness, we focus on the third column of $M_1$. Letting $w_i(x,k) = (M_1)_{i3}(x,k)$, we can write the third column of (\ref{Mndefinfty}) for $n = 1$ as
\begin{align}\label{wieqinfty}  
 & w_i(x,k) = M_{(p)}(x,k)_{i3} + \int_{-\infty}^\infty  \sum_{l = 1}^3 K(x,x',k)_{il}w_l(x',k) dx',  \qquad i = 1,2,3,
\end{align}  
where the kernel $K$ is defined for $x,x' \in \R$, $k \in \bar{D}_1^R$, and $i,l = 1,2,3$ by
\begin{align}\label{def new kernel}
K(x,x', k)_{il} = \sum_{s =1}^3 M_{(p)}(x,k)_{is} H_s(x,x') e^{(l_s - l_3)(x - x')} (M_{(p)}^{-1}\Delta)(x',k)_{sl}
\end{align}
with
$$H_s(x,x') =
\begin{cases} 
H(x-x') & \text{if} \ \gamma_{s3}^1 = (-\infty,x),
	\\
-H(x' - x) & \text{if} \ \gamma_{s3}^1 = (\infty,x).
\end{cases}$$
Let us now rewrite $\Delta$ as follows:
\begin{align*}
\Delta = (\mathsf{U}M_{(p)} + [\mathcal{L},M_{(p)}] - \partial_{x}M_{(p)})M_{(p)}^{-1}.
\end{align*}
For each $k \in \bar{D}_{1}^{R}$, the function $[\mathcal{L},M_{(p)}](\cdot,k)$ only involves the off-diagonal entries of $M_{(p)}$, and thus $[\mathcal{L},M_{(p)}](\cdot,k) \in \mathcal{S}(\mathbb{R})$ by \eqref{recursive prop on Xj}. 
Since $M_{(p)}$ and $M_{(p)}^{-1}$ are uniformly bounded for $x \in \R$ and $|k| \geq R$, and since $\mathsf{U}(\cdot,k)\in \mathcal{S}(\mathbb{R})$, we have $\Delta(\cdot,k) \in \mathcal{S}(\mathbb{R})$ for each $k \in \bar{D}_{1}^{R}$. Furthermore, since $X_{formal}$ is a formal solution of (\ref{xpart}) (i.e., it satisfies \eqref{xrecursive}), there exists a function $b_1 \in \mathcal{S}(\R)$ such that
\begin{align*}
|\mathsf{U}M_{(p)} + [\mathcal{L},M_{(p)}] - \partial_{x}M_{(p)}| < \frac{b_{1}(x)}{|k|^{p+1}}, \qquad x \in \mathbb{R}, \; k \in \bar{D}_{1}^{R}.
\end{align*}
Since $M_{(p)}^{-1}$ is uniformly bounded for $x \in \R$ and $|k| \geq R$, we find
\begin{align}\label{Deltabound}
|\Delta(x,k)| < C\frac{b_1(x)}{|k|^{p+1}}, \qquad x \in \R, \ k \in \bar{D}_1^R.
\end{align}
We infer that there exists a function $b \in \mathcal{S}(\R)$ such that the following analog of (\ref{Kmbound}) holds:
\begin{align}\label{Kmboundinfty}
|K(x,x',k)_{il}| \leq \frac{b(x')}{|k|^{p+1}}, \qquad x, x' \in \R, \ k \in \bar{D}_1^R, \ i,l = 1, 2,3.
\end{align}
Define $f^{(m)}$ and $F^{(m)}$ as in \eqref{FredholmFmdef}, but with $K(x,x',k)$ given by \eqref{def new kernel}. Proceeding as in the proof of Proposition \ref{Mnprop}, we find the following analogs of (\ref{fmkestimate}) and (\ref{Fmxxkestimate}):
\begin{align*}
& |f^{(m)}(k)| \leq \frac{3^mm^{m/2}\|b\|_{L^1(\R)}^m}{m! |k|^{(p+1)m}}, \qquad k \in \bar{D}_1^R, \ m \geq 0.
	\\
& |F^{(m)}(x,x',k)_{ii'}| \leq \frac{3^m(m+1)^{(m+1)/2}\|b\|_{L^1(\R)}^mb(x')}{m! |k|^{(p+1)(m+1)}}, \qquad x,x' \in \R, \ k \in \bar{D}_1^R, \ m \geq 0,
\end{align*}
As a result, the Fredholm determinant $f(k) = \sum_{m=0}^\infty f^{(m)}(k)$ and the Fredholm minor $F(x,x',k) = \sum_{m=0}^\infty F^{(m)}(x,x',k)$ associated with equation (\ref{wieqinfty}) obey the estimates
\begin{subequations}\label{fmFmbounds}
\begin{align}
& |f(k) - 1| \leq \frac{C}{|k|^{p+1}}, \qquad k \in \bar{D}_1^R, \label{fmFmboundsa}
	\\
& |F(x,x',k)| \leq C\frac{b(x')}{|k|^{p+1}}, \qquad x,x' \in \R, \ k \in \bar{D}_1^R.
\end{align}
\end{subequations}
Increasing $R$ if necessary, \eqref{fmFmboundsa} implies $|f(k)| \geq 1/2$ for $k \in \bar{D}_1^R$, and we arrive at the solution representation
$$w_i(x,k) = M_{(p)}(x,k)_{i3} + \frac{1}{f(k)} \int_\R \sum_{l=1}^3 F(x,x',k)_{il} M_{(p)}(x',k)_{l3} dx', \qquad x \in \R, \ k \in \bar{D}_1^R.$$
In view of the boundedness of $M_{(p)}$ and the estimates (\ref{fmFmbounds}), this yields
$$|w_i(x,k) - M_{(p)}(x,k)_{i3}| \leq \frac{C}{|k|^{p+1}}, \qquad x \in \R, \ k \in \bar{D}_1^R.$$
This proves (\ref{Matinfty}) in the case of the third column and $k \in D_1$; the proofs of the other cases are similar.
\end{proof}

We saw in (\ref{XYs}) that the spectral function  $s(k)$ relates the eigenfunctions $X$ and $Y$ if $u_0$ and $v_0$ have compact support. The next lemma introduces spectral functions $S_n(k)$ and $T_n(k)$, $n = 1, \dots, 6$, which relate the eigenfunctions $M_n$ with $X$ and $Y$. 

\begin{lemma}[Relation between $M_n$ and $X,Y$]\label{Snexplicitlemma}
Suppose $u_0,v_0 \in \mathcal{S}(\R)$ have compact support. 
Then
\begin{align*}
   M_n(x,k) & = Y(x, k) e^{x\widehat{\mathcal{L}(k)}} S_n(k)	
  \\
&  = X(x, k) e^{x\widehat{\mathcal{L}(k)}} T_n(k), \qquad x \in \R, \ k \in \bar{D}_n\setminus \mathcal{Q}, \ n = 1, \dots, 6,
\end{align*}
where $S_n(k)$ and $T_n(k)$ are given in terms of the entries of $s(k)$ by
\begin{subequations}\label{SnTnexplicit}
\begin{align}\nonumber
&  S_1(k) = \begin{pmatrix}
 s_{11} & 0 & 0 \\
 s_{21} & \frac{m_{33}(s)}{s_{11}} & 0 \\
 s_{31} & \frac{m_{23}(s)}{s_{11}} & \frac{1}{m_{33}(s)} \\
  \end{pmatrix},
&&
  S_2(k) =  \begin{pmatrix}
 s_{11} & 0 & 0 \\
 s_{21} & \frac{1}{m_{22}(s)} & \frac{m_{32}(s)}{s_{11}} \\
 s_{31} & 0 & \frac{m_{22}(s)}{s_{11}} \\
\end{pmatrix},
	\\ \nonumber
&  S_3(k) = \begin{pmatrix}
 \frac{m_{22}(s)}{s_{33}} & 0 & s_{13} \\
 \frac{m_{12}(s)}{s_{33}} & \frac{1}{m_{22}(s)} & s_{23} \\
 0 & 0 & s_{33} \\
\end{pmatrix},
&&
  S_4(k) =  \begin{pmatrix}
  \frac{1}{m_{11}(s)} & \frac{m_{21}(s)}{s_{33}} & s_{13} \\
 0 & \frac{m_{11}(s)}{s_{33}} & s_{23} \\
 0 & 0 & s_{33} \\
\end{pmatrix},
	\\ \label{Snexplicit}
&  S_5(k) = \begin{pmatrix}
 \frac{1}{m_{11}(s)} & s_{12} & -\frac{m_{31}(s)}{s_{22}} \\
 0 & s_{22} & 0 \\
 0 & s_{32} & \frac{m_{11}(s)}{s_{22}} \\
  \end{pmatrix},
&&
  S_6(k) =  \begin{pmatrix}
 \frac{m_{33}(s)}{s_{22}} & s_{12} & 0 \\
 0 & s_{22} & 0 \\
 -\frac{m_{13}(s)}{s_{22}} & s_{32} & \frac{1}{m_{33}(s)} \\
 \end{pmatrix},	
\end{align}
and
\begin{align}\nonumber
&  T_1(k) = \begin{pmatrix}
  1 & -\frac{s_{12}}{s_{11}} & \frac{m_{31}(s)}{m_{33}(s)} \\
 0 & 1 & -\frac{m_{32}(s)}{m_{33}(s)} \\
 0 & 0 & 1
   \end{pmatrix},
&&
  T_2(k) =  \begin{pmatrix}
 1 & -\frac{m_{21}(s)}{m_{22}(s)} & -\frac{s_{13}}{s_{11}} \\
 0 & 1 & 0 \\
 0 & -\frac{m_{23}(s)}{m_{22}(s)} & 1 
\end{pmatrix},
	\\ \nonumber
&  T_3(k) = \begin{pmatrix}
 1 & -\frac{m_{21}(s)}{m_{22}(s)} & 0 \\
 0 & 1 & 0 \\
 -\frac{s_{31}}{s_{33}} & -\frac{m_{23}(s)}{m_{22}(s)} & 1 
\end{pmatrix},
&&
  T_4(k) =  \begin{pmatrix}
 1 & 0 & 0 \\
 -\frac{m_{12}(s)}{m_{11}(s)} & 1 & 0 \\
 \frac{m_{13}(s)}{m_{11}(s)} & -\frac{s_{32}}{s_{33}} & 1 
\end{pmatrix},
	\\ \label{Tnexplicit}
&  T_5(k) = \begin{pmatrix}
 1 & 0 & 0 \\
 -\frac{m_{12}(s)}{m_{11}(s)} & 1 & -\frac{s_{23}}{s_{22}} \\
 \frac{m_{13}(s)}{m_{11}(s)} & 0 & 1 
  \end{pmatrix},
&&
  T_6(k) =  \begin{pmatrix}
 1 & 0 & \frac{m_{31}(s)}{m_{33}(s)} \\
 -\frac{s_{21}}{s_{22}} & 1 & -\frac{m_{32}(s)}{m_{33}(s)} \\
 0 & 0 & 1 
 \end{pmatrix}.
\end{align}
\end{subequations}
\end{lemma}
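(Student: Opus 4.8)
The plan is to exploit the fact that, when $u_0,v_0$ are compactly supported, the matrices $X$, $Y$, and each $M_n$ are all invertible solutions of the \emph{same} linear ODE \eqref{xpart}. First I would note that if $\Psi$ solves \eqref{xpart} then $\Psi e^{x\mathcal{L}}$ solves $\partial_x \Phi = L\Phi$ with $L = \mathcal{L}+\mathsf{U}$; hence $Xe^{x\mathcal{L}}$, $Ye^{x\mathcal{L}}$, and $M_n e^{x\mathcal{L}}$ are three fundamental solutions of one and the same first-order linear system. By Proposition \ref{XYprop}$(i)$ and Proposition \ref{Mnprop}$(e)$ these have unit determinant, hence are invertible, so any two of them differ by right multiplication by a matrix that is independent of $x$ (its $x$-derivative vanishes). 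This produces $x$-independent matrices $S_n(k),T_n(k)$ with $M_n = Ye^{x\widehat{\mathcal{L}(k)}}S_n = Xe^{x\widehat{\mathcal{L}(k)}}T_n$; substituting $X = Ye^{x\widehat{\mathcal{L}(k)}}s$ from \eqref{XYs} and cancelling the common left factor $Ye^{x\mathcal{L}}$ and right factor $e^{-x\mathcal{L}}$ gives the key algebraic identity $S_n = s\,T_n$.

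Next I would determine the triangular structure of $S_n$ and $T_n$ from the support assumption. Since $\mathsf{U}$ vanishes outside some interval $[-A,A]$, the Volterra normalizations force $Y(x,k)=I$ for $x\le -A$ and $X(x,k)=I$ for $x\ge A$. Evaluating $M_n=Ye^{x\widehat{\mathcal{L}(k)}}S_n$ at $x\le -A$ gives $(M_n)_{ij}(x)=e^{(l_i-l_j)x}(S_n)_{ij}$, whereas the defining system \eqref{Mndef} together with the contour choice \eqref{gammaijndef} shows that $(M_n)_{ij}(x)=\delta_{ij}$ there whenever $\gamma_{ij}^n=(-\infty,x)$, i.e. whenever $\re l_i<\re l_j$; comparing the two forces $(S_n)_{ij}=0$ for $\re l_i<\re l_j$. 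In the same way, evaluating $M_n=Xe^{x\widehat{\mathcal{L}(k)}}T_n$ at $x\ge A$ yields $(T_n)_{ij}=\delta_{ij}$ whenever $\re l_i\ge\re l_j$. Thus $S_n$ is lower triangular and $T_n$ is upper triangular with unit diagonal, both relative to the ordering of $\{\re l_1,\re l_2,\re l_3\}$ that prevails on $D_n$.

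With these structures in hand, the identity $S_n=s\,T_n$, rewritten as $s=S_nT_n^{-1}$, exhibits $s$ as a product of a lower-triangular matrix and an upper-unitriangular matrix relative to the $\re l$-ordering of $D_n$; after the index permutation that makes $\re l$ increasing, this is exactly an LU (Crout) factorization of $s$. The entries of $S_n$ and $T_n$ are then obtained by solving $S_n=sT_n$ entrywise by forward/back substitution, and the $2\times 2$ determinants that arise in the elimination are precisely the minors $m_{ij}(s)$. Carrying this out in $D_1$, where the ordering is the standard $\re l_1<\re l_2<\re l_3$ and the factorization is an ordinary LU decomposition, reproduces the stated formulas for $S_1$ and $T_1$ in \eqref{SnTnexplicit}. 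The remaining sectors I would obtain from $D_1$ using the symmetries \eqref{Msymm} and \eqref{XYsymm}: conjugation by $\mathcal{A}$ cyclically permutes $D_1\to D_3\to D_5$ (and $D_2\to D_4\to D_6$) while conjugation by $\mathcal{B}$ links the two orbits, and both act on $s,S_n,T_n$ by permuting rows and columns, so only one representative of each orbit needs to be computed by hand.

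The main difficulty is organizational rather than conceptual: one must track the sector-dependent $\re l$-ordering carefully, since the meaning of ``lower/upper triangular'' and the exact minors appearing in \eqref{SnTnexplicit} depend on the permutation attached to $D_n$. The cleanest route is to reduce every sector to the $D_1$ computation via the symmetries. The residual work---the explicit substitution yielding the ratios of minors, together with the verification that the denominators $s_{11}$, $m_{33}(s)$, etc.\ are exactly the relevant leading principal minors governing solvability of the triangular system---is routine linear algebra, valid for all $k\in\bar D_n\setminus\mathcal{Q}$ by analyticity.
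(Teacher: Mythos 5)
Your proposal is correct and follows essentially the same route as the paper: both establish that $M_n = Y e^{x\widehat{\mathcal{L}}}S_n = X e^{x\widehat{\mathcal{L}}}T_n$ with $x$-independent $S_n,T_n$ (the paper via the constancy of $e^{-x\widehat{\mathcal{L}}}M_n$ outside the support, you via the fundamental-solution argument), deduce $s = S_nT_n^{-1}$ from \eqref{XYs}, read off the triangularity conditions $(S_n)_{ij}=0$ for $\gamma_{ij}^n=(-\infty,x)$ and $(T_n)_{ij}=\delta_{ij}$ for $\gamma_{ij}^n=(\infty,x)$ from the integral equations, and solve the resulting LU-type factorization explicitly. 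Your additional suggestion of propagating the $D_1$ computation to the other sectors via the $\mathcal{A}$ and $\mathcal{B}$ symmetries is a harmless organizational variant of the paper's direct entrywise solution.
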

\proofbegin
Choose $K > 0$ such that $u_0, v_0$ have support in $[-K,K] \subset \R$. 
Define $S_n(k)$ and $T_n(k)$, $n = 1, \dots, 6$, by
\begin{align}\label{SnTndef}
\begin{cases}
  S_n(k) = \displaystyle{\lim_{x \to -\infty}} e^{-x\widehat{\mathcal{L}(k)}}M_n(x,k), 
 	\\
  T_n(k) =  \displaystyle{\lim_{x \to \infty}} e^{-x\widehat{\mathcal{L}(k)}}M_n(x,k), 
  \end{cases}
  \quad k \in \bar{D}_n\setminus \mathcal{Q}, 
\end{align}
where the limits exist because $\mathsf{U}(x,k) = 0$ for $|x| > K$, which implies by \eqref{Mndef} that $e^{-x\hat{\mathcal{L}}}M_n(x,k)$ is independent of $x$  for $|x| > K$.
Recall that $X(x,k), Y(x,k)$, and $s(k)$ are defined for all $k \in \C \setminus \{0\}$ for compactly supported data.
We find 
\begin{align}
M_n(x,k) = Y(x,k) e^{x\widehat{\mathcal{L}(k)}} S_n(k) = X(x,k) e^{x\widehat{\mathcal{L}(k)}} T_n(k),
\end{align}   
and hence, comparing with (\ref{XYs}), 
\begin{equation}\label{sSSnrelations}  
  s(k) = S_n(k)T_n^{-1}(k), \qquad k \in \bar{D}_n\setminus \mathcal{Q}.
\end{equation}
Given $s(k)$, equation (\ref{sSSnrelations}) constitutes a matrix factorization problem which can be uniquely solved for $S_n(k)$ and $T_n(k)$.
In fact, the integral equations (\ref{Mndef}) imply that
\begin{align} \nonumber
& \left(S_n(k)\right)_{ij} = 0 \quad \text{if} \quad \gamma_{ij}^n = (-\infty,x),
	\\ \nonumber
& \left(T_n(k)\right)_{ij} = \delta_{ij} \quad \text{if} \quad \gamma_{ij}^n = (\infty,x),
\end{align}
so the relation (\ref{sSSnrelations}) yields $9$ scalar equations for $9$ unknowns. 
The explicit solution of this algebraic system gives (\ref{SnTnexplicit}).
\proofend

Let $\eta \in C_c^\infty(\R)$ be a cutoff function which equals one for $|x| \leq 1$ and which vanishes for $|x| \geq 2$. For $j \geq 1$, let $\eta_j(x) = \eta(x/j)$. If $f \in \mathcal{S}(\R)$, then $\eta_jf$ is a sequence of smooth functions with compact support which converges to $f$ in $\mathcal{S}(\R)$ as $j \to \infty$.

\begin{lemma}\label{sequencelemma}
Let $u_0, v_0 \in \mathcal{S}(\R)$.
Let $\{s(k), M_n(x,k)\}$ and $\{s^{(i)}(k), M_n^{(i)}(x,k)\}$ be the spectral functions and eigenfunctions associated with $(u_0, v_0)$ and 
\begin{align}\label{uvsequence}
(u_0^{(i)}(x), v_0^{(i)}(x)) := (\eta_iu_0, \eta_iv_0) \in \mathcal{S}(\R) \times \mathcal{S}(\R),
\end{align} 
respectively. Then 
\begin{align}\label{slimiti}
& \lim_{i\to\infty} s^{(i)}(k) = s(k), \qquad k \in \begin{pmatrix}
 \omega^2 \bar{\mathrm{S}} & \R_+ & \omega \R_+ \\
 \R_+ & \omega \bar{\mathrm{S}} & \omega^2 \R_+ \\
 \omega \R_+ & \omega^2 \R_+ & \bar{\mathrm{S}}
 \end{pmatrix}\setminus \{0\},
 	\\\label{sAlimiti}
& \lim_{i\to\infty} (s^A)^{(i)}(k) = s^A(k), \qquad k \in	 \begin{pmatrix}
 -\omega^2 \bar{\mathrm{S}} & \R_- & \omega \R_- \\
 \R_- & -\omega \bar{\mathrm{S}} & \omega^2 \R_- \\
 \omega \R_- & \omega^2 \R_- & -\bar{\mathrm{S}}
 \end{pmatrix}\setminus \{0\},
	\\ \label{Xlimiti}
& \lim_{i\to \infty} X^{(i)}(x,k) = X(x,k), \qquad x \in \R, \ k \in (\omega^2 \bar{\mathrm{S}}, \omega \bar{\mathrm{S}}, \bar{\mathrm{S}}) \setminus \{0\},
	\\ \label{Ylimiti}
& \lim_{i\to \infty} Y^{(i)}(x,k) = Y(x,k), \qquad x \in \R, \ k \in (-\omega^2 \bar{\mathrm{S}}, -\omega \bar{\mathrm{S}}, -\bar{\mathrm{S}}) \setminus \{0\},
	\\ \label{Mnlimiti}
& \lim_{i\to \infty} M_n^{(i)}(x,k) = M_n(x,k), \qquad x \in \R, \ k \in \bar{D}_n\setminus \mathcal{Q}, \ n = 1, \dots, 6.
\end{align}
\end{lemma}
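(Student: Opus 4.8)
The plan is to exploit the fact that every object in the statement---$X$, $Y$, $X^A$, $Y^A$, $s$, $s^A$, and $M_n$---is manufactured from the potential $\mathsf{U}$ through an integral equation whose kernel depends continuously on $\mathsf{U}$, so that convergence of the data forces convergence of the solutions. The single input that drives everything is the convergence of the potentials. Writing $\mathsf{U}^{(i)}$ for the matrix (\ref{mathsfUdef intro}) built from $(u_0^{(i)}, v_0^{(i)})$ in (\ref{uvsequence}), I would first observe that $\mathsf{U}$ depends linearly on $u_0$, $u_{0x}$, $v_0$, and that $\eta_i u_0 \to u_0$, $(\eta_i u_0)_x = \tfrac{1}{i}\eta'(\cdot/i)\,u_0 + \eta_i u_{0x} \to u_{0x}$, and $\eta_i v_0 \to v_0$ in $\mathcal{S}(\R)$. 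Hence $\mathsf{U}^{(i)}(\cdot,k) \to \mathsf{U}(\cdot,k)$ in $\mathcal{S}(\R)$, uniformly for $k$ in compact subsets of $\C \setminus \{0\}$; in particular there is a single $b \in \mathcal{S}(\R)$ with $|\mathsf{U}^{(i)}(x,k)| \le b(x)$ for all $i$ and all $k$ bounded away from $0$.

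For the Volterra-defined eigenfunctions I would subtract the integral equations (\ref{XYdef}) for $X^{(i)}$ and $X$, obtaining a Volterra equation for the difference with inhomogeneity $g^{(i)}(x,k) = -\int_x^\infty e^{(x-x')\widehat{\mathcal{L}(k)}}\big((\mathsf{U}^{(i)}-\mathsf{U})X\big)(x',k)\,dx'$. Using the uniform bound on $X$ from Proposition \ref{XYprop}$(g)$ together with $\mathsf{U}^{(i)}-\mathsf{U}\to 0$ in $\mathcal{S}(\R)$ and dominated convergence, $g^{(i)}\to 0$ (with a uniform Schwartz bound). The resolvent of the Volterra operator associated with $\mathsf{U}^{(i)}$ is bounded uniformly in $i$ on the relevant sectors---this is the same boundedness of the exponential kernel that underlies Proposition \ref{XYprop}, and the iterated-kernel estimate depends only on $\|b\|_{L^1(\R)}$---so $X^{(i)}-X\to 0$, proving (\ref{Xlimiti}); the arguments for (\ref{Ylimiti}) and, via (\ref{XAYAdef}), for $X^A,Y^A$ are identical. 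The convergences (\ref{slimiti}) and (\ref{sAlimiti}) then follow by inserting $X^{(i)}\to X$ and $\mathsf{U}^{(i)}\to\mathsf{U}$ into the defining integrals (\ref{sdef}) and (\ref{sAdef}) and applying dominated convergence, the dominating function again coming from the uniform Schwartz bounds.

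The Fredholm case (\ref{Mnlimiti}) is the main obstacle and requires revisiting the construction in the proof of Proposition \ref{Mnprop}. Fix $\epsilon>0$ and $k$ with $\dist(k,\mathcal{Q})\ge \epsilon$. The kernels $K^{(i)}(x,x',k)$ built from $\mathsf{U}^{(i)}$ via (\ref{Kjkernel}) converge pointwise to $K(x,x',k)$ and, crucially, satisfy the \emph{uniform} bound $|K^{(i)}(x,x',k)_{il}|\le b(x')$ with the same $b$ for all $i$. Consequently the Hadamard-type estimates (\ref{Kmestimate}), (\ref{fmkestimate}), (\ref{Fmxxkestimate}) hold uniformly in $i$, which is exactly what is needed to pass to the limit inside the infinite Fredholm series: by dominated convergence for series, the Fredholm determinant $f^{(i)}(k)\to f(k)$ and the Fredholm minor $F^{(i)}(x,x',k)\to F(x,x',k)$. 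The delicate point is the handling of the determinant's zeros, since the exceptional set $\mathcal{Q}^{(i)}$ for the approximating data varies with $i$. Because $k\notin\mathcal{Q}$ we have $f(k)\neq 0$, so $f^{(i)}(k)\to f(k)\neq 0$ guarantees $k\notin \mathcal{Q}^{(i)}$ for all large $i$; thus $M_n^{(i)}(x,k)$ is eventually defined, and passing to the limit in the solution representation (\ref{wifinal}) yields $M_n^{(i)}(x,k)\to M_n(x,k)$. I expect the bookkeeping needed to make the uniform-in-$i$ Fredholm estimates and this limit exchange rigorous---together with the control of the shifting zero sets $\mathcal{Q}^{(i)}$---to be the most technical part of the argument.
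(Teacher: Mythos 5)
Your proposal is correct and follows essentially the same route as the paper's proof: uniform Schwartz-class bounds on $\mathsf{U}^{(i)}$, pointwise convergence of the Volterra-defined eigenfunctions, dominated convergence in the defining integrals (\ref{sdef}) and (\ref{sAdef}), and, for $M_n$, pointwise convergence of the kernels $K^{(i)}$ together with uniform Hadamard-type estimates so that the Fredholm determinant, the Fredholm minor, and the solution representation (\ref{wifinal}) all pass to the limit. Your explicit observation that $f^{(i)}(k)\to f(k)\neq 0$ forces $k\notin\mathcal{Q}^{(i)}$ for all large $i$, so that $M_n^{(i)}(x,k)$ is eventually defined, is a detail the paper leaves implicit, but it is a refinement of the same argument rather than a different approach.
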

\begin{proof}
The proof amounts to verifying that the solutions of the Volterra equations (\ref{XYdef}) and of the Fredholm equation (\ref{Mndef}) depend continuously on the potential $(u_0, v_0) \in \mathcal{S}(\R)$. 
Let $\epsilon > 0$ and let the superscript $(i)$ indicate quantities associated with the sequence $(u_0^{(i)}(x), v_0^{(i)}(x))$.
It is easy to see from the Volterra series that $X^{(i)}(x,k)$ converges pointwise to $X(x,k)$ as $i \to \infty$. Moreover, the following bound holds uniformly with respect to $i$ (cf. (\ref{Xesta})):
\begin{align}\label{Xibounded}
|X^{(i)}(x,k) - I| \leq C, \qquad x \in \R, \ k \in (\omega^2 \bar{\mathrm{S}}, \omega \bar{\mathrm{S}}, \bar{\mathrm{S}}), \ |k| > \epsilon.
\end{align}
Indeed, the properties of the Volterra series defining $X^{(i)}(x,k)$ depend on the norm $\|\mathsf{U}^{(i)}(\cdot,k)\|_{L^1([x,\infty))}$. Since these norms are uniformly bounded, we find (\ref{Xibounded}). The limit (\ref{slimiti}) now follows from the definition (\ref{sdef}) of $s(k)$ and dominated convergence. 
The proof of (\ref{sAlimiti}) is similar.

We next prove (\ref{Mnlimiti}).
Consider the kernel $K$ defined in (\ref{Kjkernel}). The expression \eqref{expression for U} for $\mathsf{U}$ implies that $K^{(i)}(x,x',k)$ converges pointwise to $K(x,x',k)$ as $i \to \infty$. Moreover, we can choose a function $b \in \mathcal{S}(\R)$ such that the bound (\ref{Kmbound}) holds uniformly for all $i$:
$$\sup_{i} |K^{(i)}(x,x',k)| \leq b(x'), \qquad x, x' \in \R, \ k \in \bar{D}_1^\epsilon.$$
Using dominated convergence, we can then take the limit $i \to \infty$ in the formulas involving the Fredholm determinant and Fredholm minors in the proof of Proposition \ref{Mnprop}. In particular, the Fredholm determinant $f^{(i)}(k)$ converges pointwise to $f(k)$ and, by uniform convergence of the series in (\ref{FredholmFdef}), the minor $F^{(i)}(x,x',k)$ converges pointwise to $F(x,x',k)$ and the bound (\ref{FredholmFbound}) holds uniformly with respect to  $i$. In the case of $n=1$ and the third column, the limit (\ref{Mnlimiti}) follows by using dominated convergence in the representation (\ref{wifinal}); the other cases are similar. 
\end{proof}

Recall that the sectionally analytic function $M(x,k)$ is defined by $M(x,k) = M_n(x,k)$ for $k \in D_n$.

\begin{lemma}[Jump condition for $M$]\label{Mjumplemma}
Let $u_0, v_0 \in \mathcal{S}(\R)$.
For each $x \in \R$, $M(x,k)$ satisfies the jump condition
\begin{align*}
  M_+(x,k) = M_-(x, k) v(x, 0, k), \qquad k \in \Gamma \setminus \mathcal{Q},
\end{align*}
where $v$ is the jump matrix defined in (\ref{vdef}) and $\mathcal{Q}$ is the set defined in (\ref{calQdef}).
\end{lemma}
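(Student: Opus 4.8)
The plan is to first establish the jump relation under the additional assumption that $u_0,v_0$ have compact support, where all eigenfunctions and spectral functions are globally defined on $\C\setminus\{0\}$, and then to remove this assumption by the approximation procedure of Lemma \ref{sequencelemma}.

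Assume first that $u_0,v_0$ are compactly supported. Fix a ray $\gamma$ of $\Gamma$ and let $D_n$ and $D_m$ be the two sectors adjacent to $\gamma$, with $D_n$ lying to the left and $D_m$ to the right of $\gamma$ with respect to its orientation, so that $M_+ = M_n$ and $M_- = M_m$ on $\gamma$. By Lemma \ref{Snexplicitlemma}, both restrictions factor through the same eigenfunction as $M_j(x,k) = Y(x,k)e^{x\widehat{\mathcal{L}(k)}}S_j(k)$. Since $e^{x\widehat{\mathcal{L}(k)}}$ acts by conjugation with $e^{x\mathcal{L}}$, the common factor $Y$ cancels and the jump matrix is the $x$-independent factor $S_m^{-1}S_n$ dressed by this conjugation:
\begin{align*}
v = M_-^{-1}M_+ = e^{x\widehat{\mathcal{L}(k)}}\big(S_m(k)^{-1}S_n(k)\big), \qquad k \in \gamma\setminus\mathcal{Q}.
\end{align*}
The conjugation places exactly the factor $e^{\theta_{ij}(x,0,k)} = e^{(l_i-l_j)x}$ on the off-diagonal $(ij)$ entry of $S_m^{-1}S_n$, which accounts for the exponentials appearing in (\ref{vdef}). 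It remains to compute the constant factor $S_m^{-1}S_n$ on each of the six rays and to match it with the corresponding $v_j$; one could equally well use the representation through $X$ and the matrices $T_n$.

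For this I would substitute the explicit expressions (\ref{SnTnexplicit}) for $S_n,S_m$, simplify using $\det s = 1$ (which reduces the $2\times2$ minors $m_{ij}(s)$ to the appropriate products of entries of $s$), and invoke the defining relations (\ref{r1r2def}), namely $r_1 = s_{12}/s_{11}$ on $\R_+$ and $r_2 = -m_{12}(s)/m_{11}(s)$ on $\R_-$. The six computations reduce to essentially one: the $\Z_3$ symmetry $s(k) = \mathcal{A}s(\omega k)\mathcal{A}^{-1}$ (Proposition \ref{sprop}$(d)$) relates the factorizations on the three pairs of rays obtained by multiplication by $\omega$, and thereby converts a jump expressed through $r_1(k)$ (resp.\ $r_2(k)$) on $\R_+$ (resp.\ $\R_-$) into the jumps through $r_1(\omega^{j}k),r_2(\omega^{j}k)$ appearing in $v_2,\dots,v_6$. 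The $\Z_2$ symmetry $s(k)=\mathcal{B}\overline{s(\overline{k})}\mathcal{B}$ is what identifies the remaining entries with the conjugated coefficients $r_1^*,r_2^*$. Matching entry by entry then reproduces $v_1,\dots,v_6$ exactly.

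Finally, to pass to general $u_0,v_0\in\mathcal{S}(\R)$, I would apply the relation $M_+^{(i)} = M_-^{(i)}v^{(i)}$ just established for the compactly supported data $(u_0^{(i)},v_0^{(i)})=(\eta_iu_0,\eta_iv_0)$ and let $i\to\infty$. By Lemma \ref{sequencelemma} the eigenfunctions $M_n^{(i)}$ converge pointwise to $M_n$ away from $\mathcal{Q}$, and $s^{(i)},(s^A)^{(i)}$ converge to $s,s^A$, so the reflection coefficients and hence $v^{(i)}$ converge pointwise to $v$ on $\Gamma\setminus\mathcal{Q}$, and the jump relation survives in the limit. The main obstacle is the explicit algebraic verification in the compactly supported case: correctly fixing the left/right ($\pm$) assignment of sectors on each ray, carrying out the minor simplifications via $\det s=1$, and tracking how the two symmetries convert the bare entries of $S_m^{-1}S_n$ into the precise combinations of $r_1,r_2,r_1^*,r_2^*$ recorded in (\ref{vdef}).
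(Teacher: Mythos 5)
Your proposal is correct and follows essentially the same route as the paper: establish the jump for compactly supported data by writing $M_\pm = Y e^{x\widehat{\mathcal{L}(k)}}S_{n,m}$ (the paper derives this connection matrix directly from the ODE and evaluation at $x < -K$, which is precisely the content of Lemma \ref{Snexplicitlemma} that you invoke), compute $S_m^{-1}S_n$ from the explicit formulas (\ref{Snexplicit}) together with the definitions of $r_1, r_2$, use the $\Z_3$ and $\Z_2$ symmetries to cover the remaining rays, and then pass to general Schwartz data via the approximation Lemma \ref{sequencelemma}. The only cosmetic difference is that the paper verifies rays $1$ and $4$ and symmetrizes, while you reduce all six rays to one by symmetry from the outset.
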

\begin{proof}
We will show that 
\begin{align}\label{M1M6v1}
M_1 = M_6 v_1, \qquad k \in (0, \infty)\setminus \mathcal{Q};
\end{align}
the proof that $M_4 = M_3 v_4$ for $k \in (-\infty, 0)\setminus \mathcal{Q}$ is similar, and the jumps on the remaining parts of $\Gamma$ follow from these two jumps by symmetry. 

Suppose first that $u_0, v_0$ have support in some compact subset $[-K,K] \subset \R$,  $K > 0$. 
For each $k$, $M_n(x,k)$ is a smooth function of $x \in \R$ which satisfies (\ref{xpart}). 
Hence there exists a matrix $J_1(k)$ independent of $x$  such that
\begin{align}\label{M1M6J1}
M_1(x,k) = M_6(x,k) e^{x\widehat{\mathcal{L}(k)}}J_1(k), \qquad k \in (0, \infty).
\end{align}
For $x < -K$ we have $M_n(x,k) = e^{x\widehat{\mathcal{L}(k)}}S_n(k)$, where $S_n(k)$ is the matrix defined in (\ref{SnTndef}). Hence, evaluation of (\ref{M1M6J1}) at $x < -K$ gives
$$J_1(k) = S_6(k)^{-1}S_1(k).$$
This completes the proof of (\ref{M1M6v1}) for compactly supported $u_0,v_0$, because, by (\ref{vdef}) and (\ref{Snexplicit}),
$$e^{x\hat{\mathcal{L}}}[S_6(k)^{-1}S_1(k)] = e^{x\hat{\mathcal{L}}}  \begin{pmatrix}  
 1 & - r_1(k) & 0 \\
  r_1^*(k) & 1 - |r_1(k)|^2 & 0 \\
  0 & 0 & 1
  \end{pmatrix} = v_1(x,0,k).$$

If $u_0,v_0 \in \mathcal{S}(\R)$ are not compactly supported, we let $(u_0^{(i)}(x), v_0^{(i)}(x))$ be the sequence of smooth functions with compact support converging to $(u_0, v_0)$ defined in (\ref{uvsequence}). In view of Lemma \ref{sequencelemma}, the relation (\ref{M1M6v1}) for $(u_0,v_0)$ follows immediately by taking the limit $i \to \infty$ in the analogous relation for $(u_0^{(i)}(x), v_0^{(i)}(x))$.
\end{proof}

\begin{lemma}\label{M1XYlemma}
Let $u_0, v_0 \in \mathcal{S}(\R)$.
The functions $M_1$ and $M_1^A \equiv (M_1^{-1})^T$ can be expressed in terms of the entries of $X,Y,X^A, Y^A, s$, and $s^A$ as follows:
\begin{align*}
M_1 = \begin{pmatrix} 
X_{11} & \frac{Y_{31}^AX_{23}^A - Y_{21}^AX_{33}^A}{s_{11}} & \frac{Y_{13}}{s_{33}^A} \\
X_{21} & \frac{Y_{11}^AX_{33}^A - Y_{31}^AX_{13}^A}{s_{11}} & \frac{Y_{23}}{s_{33}^A} \\
X_{31} & \frac{Y_{21}^AX_{13}^A - Y_{11}^AX_{23}^A}{s_{11}} & \frac{Y_{33}}{s_{33}^A} 
\end{pmatrix}, \qquad
M_1^A = \begin{pmatrix} 
\frac{Y_{11}^A}{s_{11}} & \frac{X_{31}Y_{23} - X_{21}Y_{33}}{s_{33}^A} & X_{13}^A \\
\frac{Y_{21}^A}{s_{11}} & \frac{X_{11}Y_{33} - X_{31}Y_{13}}{s_{33}^A} & X_{23}^A \\
\frac{Y_{31}^A}{s_{11}} & \frac{X_{21}Y_{13} - X_{11}Y_{23}}{s_{33}^A} & X_{33}^A 
\end{pmatrix},
\end{align*}
for all $x \in \R$ and $k \in \bar{D}_1 \setminus \mathcal{Q}$.
\end{lemma}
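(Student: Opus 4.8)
The plan is to establish the identities first for compactly supported $u_0,v_0$—where every eigenfunction is analytic on $\C\setminus\{0\}$, the cofactor relations $X^A=(X^{-1})^T$, $Y^A=(Y^{-1})^T$, $s^A=(s^{-1})^T$ hold, and $\det M_1=1$ by Proposition \ref{Mnprop}$(e)$—and then to pass to general Schwartz data via the approximation of Lemma \ref{sequencelemma}.

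For compactly supported data Lemma \ref{Snexplicitlemma} gives $M_1=Xe^{x\widehat{\mathcal{L}(k)}}T_1=Ye^{x\widehat{\mathcal{L}(k)}}S_1$ with $S_1,T_1$ as in \eqref{SnTnexplicit}. Since $T_1$ is upper triangular with unit diagonal, the first column of $e^{x\widehat{\mathcal{L}(k)}}T_1$ is $(1,0,0)^T$ (the exponential factors multiply only off-diagonal entries), so the first column of $M_1$ is the first column $(X_{11},X_{21},X_{31})^T$ of $X$. Likewise the third column of $S_1$ is $m_{33}(s)^{-1}(0,0,1)^T$, so the third column of $M_1$ equals $m_{33}(s)^{-1}(Y_{13},Y_{23},Y_{33})^T$; as $s^A=(s^{-1})^T$ forces $m_{33}(s)=s^A_{33}$, this is the claimed third column. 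Applying the cofactor map $B\mapsto B^A$—a homomorphism satisfying $(e^{x\widehat{\mathcal{L}(k)}}B)^A=e^{-x\widehat{\mathcal{L}(k)}}B^A$—to the two factorizations yields $M_1^A=X^Ae^{-x\widehat{\mathcal{L}(k)}}T_1^A=Y^Ae^{-x\widehat{\mathcal{L}(k)}}S_1^A$. Now $T_1^A$ is lower triangular with unit diagonal, so its third column is $(0,0,1)^T$ and the third column of $M_1^A$ is $(X_{13}^A,X_{23}^A,X_{33}^A)^T$; and $S_1$ being lower triangular with $(1,1)$-entry $s_{11}$ makes the first column of $S_1^A$ equal to $s_{11}^{-1}(1,0,0)^T$, so the first column of $M_1^A$ is $s_{11}^{-1}(Y_{11}^A,Y_{21}^A,Y_{31}^A)^T$. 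These are precisely the first and third columns in the asserted formulas.

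I would obtain the two middle columns from the cofactor relation. Since $\det M_1=1$, the columns of $M_1^A=(M_1^{-1})^T$ are the cross products of those of $M_1$, and with the sign convention of \eqref{cofactordef} one has $(M_1^A)_{\cdot 2}=(M_1)_{\cdot 3}\times(M_1)_{\cdot 1}$; inserting the first and third columns of $M_1$ found above would reproduce the stated second column of $M_1^A$. Symmetrically, $(M_1^A)^A=M_1$ and $\det M_1^A=1$ give $(M_1)_{\cdot 2}=(M_1^A)_{\cdot 3}\times(M_1^A)_{\cdot 1}$, and inserting the first and third columns of $M_1^A$ would reproduce the stated second column of $M_1$. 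This would settle the compactly supported case. For general $u_0,v_0\in\mathcal{S}(\R)$ I would take the limit along $(u_0^{(i)},v_0^{(i)})=(\eta_iu_0,\eta_iv_0)$: Lemma \ref{sequencelemma} provides convergence of $X,Y,s,s^A,M_n$, the identical Volterra estimate provides convergence of $X^A,Y^A$, and every entry appearing on the right-hand sides—together with the denominators $s_{11}$ and $s_{33}^A$—is defined on $\bar D_1\setminus\mathcal{Q}$, so the identities persist in the limit.

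The crux of the argument, and the reason the formulas are written with minors of $X^A,Y^A$ rather than bare entries of $X,Y$, lies in the middle column: reading $(M_1)_{\cdot 2}$ directly off either factorization produces exponential factors together with columns of $X$ or $Y$ that fail to be analytic on $\bar D_1$. The cross-product identity sidesteps this by expressing $(M_1)_{\cdot 2}$ through the third column of $X^A$ (analytic on $-\bar{\mathcal{S}}\supset\bar D_1$) and the first column of $Y^A$ (analytic on $\omega^2\bar{\mathcal{S}}\supset\bar D_1$), per Proposition \ref{XAYAprop}. The remaining point requiring care is that the denominators $s_{11}$ and $s_{33}^A$ do not vanish on $\bar D_1\setminus\mathcal{Q}$, which is exactly what the definition of $\mathcal{Q}$ through the zeros of the associated Fredholm determinants guarantees.
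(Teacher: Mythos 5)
Your proposal is correct and follows essentially the same route as the paper: the compactly supported case plus the factorizations of Lemma \ref{Snexplicitlemma} give the first and third columns of $M_1$ and $M_1^A$, the middle columns come from the cofactor identity (your cross products are exactly the paper's minor relations $M_{12} = -m_{12}(M^A)$, $M_{22} = m_{22}(M^A)$, $M_{32} = -m_{32}(M^A)$ and their counterparts for $M^A$), and Lemma \ref{sequencelemma} handles the passage to general Schwartz data. The only cosmetic difference is that you derive the factorizations of $M_1^A$ by applying the cofactor map to those of $M_1$, whereas the paper writes out $S_1^A$ and $T_1^A$ explicitly and argues analogously.
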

\begin{proof}
Let $(u_0^{(i)}(x), v_0^{(i)}(x))$ be the sequence converging to $(u_0(x), v_0(x))$ in (\ref{uvsequence}). Then Lemma \ref{Snexplicitlemma} implies
\begin{align*}
   M_n^{(i)}(x,k) & = Y^{(i)}(x, k) e^{x\widehat{\mathcal{L}(k)}} S_n^{(i)}(k)	
= X^{(i)}(x, k) e^{x\widehat{\mathcal{L}(k)}} T_n^{(i)}(k), \qquad k \in \bar{D}_n\setminus \mathcal{Q}.
\end{align*}
In particular, the first and third columns of $M_1^{(i)}$ admit the following representations for $x \in \R$, $k \in \bar{D}_1\setminus \mathcal{Q}$, and $i \geq 1$:
\begin{align*}
\begin{cases}
 [M_1^{(i)}(x,k)]_1 = [X^{(i)}(x,k)]_1,
	\\
 [M_1^{(i)}(x,k)]_3 = \frac{[Y^{(i)}(x,k)]_3}{m_{33}(s^{(i)})},
\end{cases}
\end{align*}
where we use the notation $[A]_j$ for the $j$th column of a matrix $A$. 
Using Lemma \ref{sequencelemma}  to let $i \to \infty$, we find (note that all quantities are well-defined also when $u_0, v_0$ are not compactly supported)
\begin{align}\label{M1XY}
\begin{cases}
 [M_1(x,k)]_1 = [X(x,k)]_1,
	\\
 [M_1(x,k)]_3 = \frac{[Y(x,k)]_3}{(s^A(k))_{33}},
 \end{cases} \quad x \in \R, \ k \in \bar{D}_1\setminus \mathcal{Q}.
\end{align} 
Analogous arguments using that
\begin{align*}
S_1(k)^A = \begin{pmatrix}
 \frac{1}{s_{11}} & -\frac{s_{21}}{m_{33}(s)} & m_{13}(s) \\
 0 & \frac{s_{11}}{m_{33}(s)} & -m_{23}(s) \\
 0 & 0 & m_{33}(s)
 \end{pmatrix}, \qquad
 T_1(k)^A = \begin{pmatrix}
  1 & 0 & 0 \\
 \frac{s_{12}}{s_{11}} & 1 & 0 \\
 \frac{s_{13}}{s_{11}} & \frac{m_{32}(s)}{m_{33}(s)} & 1 
 \end{pmatrix},
\end{align*} 
show that
\begin{align}\label{M1AXAYA}
\begin{cases}
 [M_1^A(x,k)]_1 = \frac{[Y^A(x,k)]_1}{s_{11}(k)},
	\\
 [M_1^A(x,k)]_3 = [X^A(x,k)]_3,
 \end{cases} \quad x \in \R, \ k \in \bar{D}_1\setminus \mathcal{Q}.
\end{align} 
On the other hand, the trivial identity $M = (M^A)^A$ shows that $[M]_2$ can be expressed in terms of $[M^A]_1$ and $[M^A]_3$ by
\begin{align}\label{Msecondcolumnd}
M_{12} = -m_{12}(M^A), \quad
M_{22} = m_{22}(M^A), \quad
M_{32} = -m_{32}(M^A).
\end{align}
Similarly $[M^A]_2$ can be expressed in terms of $[M]_1$ and $[M]_3$ by
\begin{align}\label{MAsecondcolumnd}
M_{12}^A = -m_{12}(M), \quad
M_{22}^A = m_{22}(M), \quad
M_{32}^A = -m_{32}(M).
\end{align}
The lemma follows from equations (\ref{M1XY})-(\ref{MAsecondcolumnd}).
\end{proof}

Lemma \ref{M1XYlemma} shows that if $u_0, v_0$ satisfy Assumption \ref{solitonlessassumption}, then $M$ has no singularities apart from $k=0$ (because $s_{11}^{A}(k) \neq 0$ for $k \in \bar{D}_{4}\setminus \{0\}$ implies $s_{33}^{A}(k)=s_{22}^{A}(\omega k) = \overline{s_{11}^{A}(\overline{\omega k})} \neq 0$ for $k \in \bar{D}_{1}\setminus \{0\}$); roughly speaking, this absence of singularities corresponds to the absence of solitons. In this case, we can define the value of $M(x,t,k)$ at any point $k_j \in \mathcal{Q} \cap \bar{D}_n  \setminus \{0\}$ by continuity:
\begin{align}\label{Mnkjdef}
M_n(x,t,k_j) = \lim_{\underset{k \in \bar{D}_n \setminus \mathcal{Q}}{k\to k_j}} M_n(x,t,k).
\end{align}
As a consequence, we can replace $\mathcal{Q}$ with $\{0\}$ in all the above results.

\begin{lemma}\label{QtildeQlemma}
Suppose $u_0,v_0 \in \mathcal{S}(\R)$ are such that Assumption \ref{solitonlessassumption} holds. Then the statements of Proposition \ref{Mnprop} and Lemmas \ref{Snexplicitlemma}-\ref{Mjumplemma} hold with $\mathcal{Q}$ replaced by $\{0\}$.
\end{lemma}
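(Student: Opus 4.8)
The plan is to show that the only genuine singularity of $M$ in the finite plane is at $k=0$, so that the points of $\mathcal{Z}$ are removable once Assumption~\ref{solitonlessassumption} is imposed. The entire argument rests on Lemma~\ref{M1XYlemma}, which expresses every entry of $M_1$ and of $M_1^A$ as a ratio whose numerator is a polynomial in the entries of $X,Y,X^A,Y^A$ and whose denominator is either $s_{11}(k)$ or $s_{33}^A(k)$. Since the entries of $X,Y,X^A,Y^A$ are, by Propositions~\ref{XYprop} and~\ref{XAYAprop}, continuous on $\bar{D}_1\setminus\{0\}$, analytic on $D_1\setminus\{0\}$, and possess continuous $k$-derivatives of all orders there, it suffices to prove that the two denominators are nowhere zero on $\bar{D}_1\setminus\{0\}$; the right-hand sides of Lemma~\ref{M1XYlemma} then furnish a continuation of $M_1$ across $\mathcal{Z}\cap\bar{D}_1$.

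First I would dispatch the denominators. Assumption~\ref{solitonlessassumption} gives directly $s_{11}(k)\neq0$ for $k\in\bar{D}_1\setminus\{0\}$ and $s_{11}^A(k)\neq0$ for $k\in\bar{D}_4\setminus\{0\}$. To handle $s_{33}^A$ I would use the two symmetries of $s^A$ from Proposition~\ref{sAprop}$(d)$. Reading off the $(33)$ entry of $s^A(k)=\mathcal{A}s^A(\omega k)\mathcal{A}^{-1}$ and then the $(22)$ entry of $s^A(k)=\mathcal{B}\overline{s^A(\bar k)}\mathcal{B}$ yields
\begin{align*}
s_{33}^A(k)=s_{22}^A(\omega k)=\overline{s_{11}^A(\omega^2\bar k)}.
\end{align*}
A short check that the map $k\mapsto\omega^2\bar k$ carries $\bar{D}_1$ onto $\bar{D}_4$ (compare the defining inequalities $\re l_j=\re(\omega^j k)$ of the two sectors: under this map the ordering of $\re l_1,\re l_2,\re l_3$ is exactly reversed) then shows that $s_{33}^A(k)\neq0$ for $k\in\bar{D}_1\setminus\{0\}$, again by Assumption~\ref{solitonlessassumption}. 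Thus neither denominator vanishes on $\bar{D}_1\setminus\{0\}$.

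With this in hand, the right-hand sides of the two formulas in Lemma~\ref{M1XYlemma} are continuous on $\bar{D}_1\setminus\{0\}$, analytic on $D_1\setminus\{0\}$, and have continuous $k$-derivatives there. Since they coincide with $M_1$ and $M_1^A$ on $\bar{D}_1\setminus\mathcal{Q}$, the limit in \eqref{Mnkjdef} exists at every $k_j\in\mathcal{Z}\cap\bar{D}_1$ and equals the common value; this is precisely the continuation across $\mathcal{Z}$ that makes properties $(a),(b),(d)$ of Proposition~\ref{Mnprop} valid on $\bar{D}_1\setminus\{0\}$. Property $(e)$ (that $\det M_1=1$) and the symmetries $(f)$ extend from $\bar{D}_1\setminus\mathcal{Q}$ to $\bar{D}_1\setminus\{0\}$ by continuity. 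For the uniform bound \eqref{Mnbounded} I would note that, for $|k|>\epsilon$, the building blocks $X,Y,X^A,Y^A$ are bounded uniformly in $x$ by Proposition~\ref{XYprop}$(g)$ and its analogue, while $1/s_{11}$ and $1/s_{33}^A$ are bounded on $\{|k|\ge\epsilon\}\cap\bar{D}_1$ because $s\to I$ as $k\to\infty$ (Proposition~\ref{sprop}$(e)$) and both functions are continuous and nonvanishing on that set. Finally, the $\mathbb{Z}_3$ symmetry \eqref{Msymm} transports every conclusion from $M_1$ to $M_2,\dots,M_6$, and the statements of Lemmas~\ref{Snexplicitlemma}--\ref{Mjumplemma}---in particular the jump relation, which a priori holds only on $\Gamma\setminus\mathcal{Q}$---then extend to $\Gamma\setminus\{0\}$ by letting $k\to k_j$.

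The main obstacle is not any single estimate but the bookkeeping needed to confirm that the representation of Lemma~\ref{M1XYlemma} really delivers the full list of regularity properties uniformly: one must track that the continuous $k$-derivatives of $X,Y,X^A,Y^A$ survive division by the nonvanishing $s_{11},s_{33}^A$, and that the boundedness \eqref{Mnbounded} holds uniformly in $x\in\R$ near the removed points of $\mathcal{Z}$. The small auxiliary point---verifying $\omega^2\bar{k}\in\bar{D}_4$ for $k\in\bar{D}_1$, so that Assumption~\ref{solitonlessassumption} genuinely controls $s_{33}^A$ on $\bar{D}_1$---is where one must be careful with the sector combinatorics, but it is otherwise routine.
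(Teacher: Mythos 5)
Your proposal is correct and follows essentially the same route as the paper: it invokes Lemma \ref{M1XYlemma} to represent $M_1$ and $M_1^A$ in terms of $X,Y,X^A,Y^A,s,s^A$, removes the denominators via the same symmetry chain $s_{33}^A(k)=s_{22}^A(\omega k)=\overline{s_{11}^A(\overline{\omega k})}\neq 0$ on $\bar{D}_1\setminus\{0\}$ that the paper records in the remark preceding the lemma, and then extends all properties across $\mathcal{Z}$ by continuity/analyticity and the $\mathbb{Z}_3$ symmetry. The paper's own proof is a one-line appeal to exactly this argument, so your write-up is simply a more detailed version of it.
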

\begin{proof}
Lemma \ref{M1XYlemma} shows that, by analyticity and continuity arguments, the statements of Proposition \ref{Mnprop} and Lemmas \ref{Snexplicitlemma}-\ref{Mjumplemma} can be extended to all $k \notin \{0\}$.
\end{proof}

\begin{lemma}[Asymptotics of $M$ as $k \to 0$]\label{Mat1lemma}
Suppose $u_0,v_0 \in \mathcal{S}(\R)$ are such that Assumptions \ref{solitonlessassumption} and \ref{originassumption} hold.
Let $p \geq 1$ be an integer.
Then there are $3 \times 3$-matrix valued functions $\{\mathcal{M}_n^{(l)}(x), \mathcal{N}_n^{(l)}(x)\}$, $n = 1,\dots,6$, $l = -2,-1,0, \dots, p$, with the following properties:
\begin{enumerate}[$(a)$]
\item The function $M$ satisfies, for $x \in \R$,
\begin{align*}
& \bigg|M_n(x,k) - \sum_{l=-2}^p \mathcal{M}_n^{(l)}(x)k^l\bigg| \leq C
|k|^{p+1}, \qquad |k| \leq \frac{1}{2}, \ k \in \bar{D}_n.
\end{align*}
\item The function $M^{-1}$ satisfies, for $x \in \R$,
\begin{align*}
& \bigg|M_n(x,k)^{-1} - \sum_{l=-2}^p \mathcal{N}_n^{(l)}(x)k^l\bigg| \leq
C|k|^{p+1}, \qquad |k| \leq \frac{1}{2}, \ k \in \bar{D}_n.
\end{align*}
\item For each $n$ and each $l \geq -2$, $\{\mathcal{M}_n^{(l)}(x), \mathcal{N}_n^{(l)}(x)\}$ are smooth functions of $x \in \R$.
\item For n=1, the first coefficients are given by
\begin{align*}
& \mathcal{M}_{1}^{(-2)}(x) = \alpha_{1}(x) \begin{pmatrix}
\omega & 0 & 0 \\
\omega & 0 & 0 \\
\omega & 0 & 0
\end{pmatrix}, \\
& \mathcal{M}_{1}^{(-1)}(x) = \beta_{1}(x) \begin{pmatrix}
\omega^{2} & 0 & 0 \\
\omega^{2} & 0 & 0 \\
\omega^{2} & 0 & 0 
\end{pmatrix} + \gamma_{1}(x) \begin{pmatrix}
\omega^{2} & 0 & 0 \\
1 & 0 & 0 \\
\omega & 0 & 0
\end{pmatrix} + \delta(x) \begin{pmatrix}
0 & 1-\omega & 0 \\
0 & 1-\omega & 0 \\
0 & 1-\omega & 0
\end{pmatrix}, 
\end{align*}
where
\begin{align*}
\delta(x) = \frac{\tilde{\alpha}_{2}(x) \tilde{\gamma}_{1}(x)-\tilde{\alpha}_{1}(x) \tilde{\gamma}_{2}(x)}{\mathfrak{s}^{(-2)}},
\end{align*}
and the third column of $\mathcal{M}_{1}^{(0)}(x)$ is given by
\begin{align}
\mathcal{M}_{1}^{(0)}(x) = \begin{pmatrix}
\star & \star & \epsilon(x) \\
\star & \star & \epsilon(x) \\
\star & \star & \epsilon(x) 
\end{pmatrix}  \qquad \mbox{with } \quad \epsilon(x) = \frac{\alpha_{2}(x)}{\mathfrak{s}^{A(-2)}},
\end{align}
where $\star$ denotes an unspecified entry.
Moreover,
\begin{align}
& \mathcal{N}_{1}^{(-2)}(x) = \tilde{\alpha}_{1}(x) \begin{pmatrix}
0 & 0 & 0 \\
0 & 0 & 0 \\
\omega & \omega^{2} & 1
\end{pmatrix}, \label{explicit Ncalpm2p} \\
& \mathcal{N}_{1}^{(-1)}(x) = \tilde{\beta}_{1}(x) \begin{pmatrix}
0 & 0 & 0 \\
0 & 0 & 0 \\
\omega & \omega^{2} & 1 
\end{pmatrix} + \tilde{\gamma}_{1}(x) \begin{pmatrix}
0 & 0 & 0 \\
0 & 0 & 0 \\
\omega^{2} & \omega & 1
\end{pmatrix} \\
& \hspace{1.7cm} + \tilde{\delta}(x) \begin{pmatrix}
0 & 0 & 0 \\
\omega-1 & \omega^{2}-\omega & 1-\omega^{2} \\
0 & 0 & 0
\end{pmatrix}, \label{explicit Ncalpm1p}
\end{align}
where
\begin{align*}
\tilde{\delta}(x) = \frac{\alpha_{2}(x) \gamma_{1}(x)-\alpha_{1}(x) \gamma_{2}(x)}{\mathfrak{s}^{A(-2)}},
\end{align*}
and the first row of $\mathcal{N}_{1}^{(0)}(x)$ is given by
\begin{align}
\mathcal{N}_{1}^{(0)}(x) = \begin{pmatrix}
\tilde{\epsilon}(x) & \tilde{\epsilon}(x) & \tilde{\epsilon}(x) \\
\star & \star & \star \\
\star & \star & \star 
\end{pmatrix} \qquad \mbox{with } \quad \tilde{\epsilon}(x) = \frac{\tilde{\alpha}_{2}(x)}{\mathfrak{s}^{(-2)}}. \label{explicit Ncalp0p third column}
\end{align}

\item For each $x \in \mathbb{R}$, the function $k \mapsto \begin{pmatrix}
\omega & \omega^{2} & 1
\end{pmatrix}M_{n}(x,k)$ is bounded as $k \to 0$, $k \in \bar{D}_{n}$.
\end{enumerate}
\end{lemma}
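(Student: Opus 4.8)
The plan is to establish everything first in the sector $\bar{D}_1$ and then transport the result to the other sectors by the $\mathbb{Z}_3$ symmetry (\ref{Msymm}). The essential input is Lemma \ref{M1XYlemma}, which writes each entry of $M_1$ and of $M_1^A=(M_1^{-1})^T$ as a rational expression in the entries of $X,Y,X^A,Y^A$ over one of the two $x$-independent denominators $s_{11}$ and $s_{33}^A$. Into these expressions I would insert the local expansions at $k=0$ given by Proposition \ref{XYat1prop} for $X,Y$, by Proposition \ref{XAYAat1prop} for $X^A,Y^A$, and by Propositions \ref{sprop}$(f)$ and \ref{sAprop}$(f)$ for the denominators. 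Under Assumption \ref{originassumption} the constants $\mathfrak{s}^{(-2)}$ and $\mathfrak{s}^{A(-2)}$ of (\ref{spm2p}) and (\ref{sApm2p}) are nonzero, so $s_{11}$ and $s_{33}^A$ are of exact order $k^{-2}$ and invertible to leading order; since their expansions carry no $x$-dependence, the quotients inherit both the termwise-differentiable expansions and the $x$-regularity of the numerators. This yields the estimates in $(a)$ and $(b)$ (for fixed $x$ the functions $f_\pm$ of the cited bounds are constants) and the smoothness in $x$ claimed in $(c)$.

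The core of the argument is the column-by-column computation of the coefficients in $(d)$ for $n=1$. The first column is free, since $[M_1]_1=[X]_1$ by Lemma \ref{M1XYlemma}, so its $k^{-2}$ and $k^{-1}$ coefficients are read off from the first columns of (\ref{Cjminus2}) and (\ref{Cjminus1}), producing the $\alpha_1,\beta_1,\gamma_1$ terms. The third column is $[M_1]_3=[Y]_3/s_{33}^A$; as both factors have exact order $k^{-2}$, the quotient is $O(1)$, the third columns of $\mathcal{M}_1^{(-2)}$ and $\mathcal{M}_1^{(-1)}$ vanish, and (\ref{Cjminus2}) together with (\ref{sApm2p}) give the leading value $\frac{\alpha_2}{\mathfrak{s}^{A(-2)}}(1,1,1)^T$, that is, $\epsilon$. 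The delicate case is the second column, whose three entries are $2\times2$ determinants formed from the third column of $X^A$ and the first column of $Y^A$, divided by $s_{11}$; a crude count would permit a double pole. The observation I would exploit is that by (\ref{Djminus2}) the leading $k^{-2}$ parts of those two columns are both proportional to $(\omega,\omega^2,1)^T$, so each determinant vanishes at order $k^{-4}$; the numerators are therefore $O(k^{-3})$, and dividing by $s_{11}\sim\mathfrak{s}^{(-2)}\omega\,k^{-2}$ leaves at most a simple pole. This forces the second column of $\mathcal{M}_1^{(-2)}$ to vanish and, after tracking the surviving $O(k^{-3})$ contribution through (\ref{Djminus1}), produces the $\delta$-term with $\delta=(\tilde\alpha_2\tilde\gamma_1-\tilde\alpha_1\tilde\gamma_2)/\mathfrak{s}^{(-2)}$. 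Running the identical scheme on the formula for $M_1^A$ in Lemma \ref{M1XYlemma} yields $\mathcal{N}_1^{(-2)},\mathcal{N}_1^{(-1)}$ as in (\ref{explicit Ncalpm2p})--(\ref{explicit Ncalpm1p}) and the entry $\tilde\epsilon$ in (\ref{explicit Ncalp0p third column}), with $\tilde\delta$ coming from the mirror cancellation.

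To reach $n=2,\dots,6$ I would use $M_n(x,k)=\mathcal{A}M_{n'}(x,\omega k)\mathcal{A}^{-1}$ from (\ref{Msymm}), which relates the coefficients by $\mathcal{M}_n^{(l)}=\omega^l\mathcal{A}\,\mathcal{M}_{n'}^{(l)}\,\mathcal{A}^{-1}$ and similarly for $\mathcal{N}_n^{(l)}$, so that $(a)$--$(c)$ hold in every sector. For part $(e)$ I would first check that every column of $\mathcal{M}_1^{(-2)}$ and $\mathcal{M}_1^{(-1)}$ lies in the span of the $\mathcal{A}$-eigenvectors $(1,1,1)^T$ and $(1,\omega,\omega^2)^T$ (eigenvalues $1$ and $\omega^2$), both of which are annihilated on the left by $(\omega,\omega^2,1)$ because $1+\omega+\omega^2=0$; hence $(\omega,\omega^2,1)M_1=O(1)$. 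Since $\mathcal{A}$ sends each eigenvector to a scalar multiple of itself, the conjugation relation shows that the columns of $\mathcal{M}_n^{(-2)}$ and $\mathcal{M}_n^{(-1)}$ remain in this same two-dimensional eigenspace for every $n$, giving $(\omega,\omega^2,1)M_n=O(1)$ throughout. The step I expect to be the main obstacle is precisely the second-column cancellation and the extraction of $\delta$ and $\tilde\delta$: these require carrying the expansions of $X^A,Y^A$ to subleading order and using the exact entrywise form of (\ref{Djminus2})--(\ref{Djminus1}) rather than mere pole-order bookkeeping.
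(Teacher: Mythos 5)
Your proposal is correct and follows essentially the same route as the paper: Lemma \ref{M1XYlemma} combined with the $k\to 0$ expansions of Propositions \ref{XYat1prop}, \ref{sprop}, \ref{XAYAat1prop}, and \ref{sAprop} under Assumptions \ref{solitonlessassumption} and \ref{originassumption}, with the remaining sectors handled by symmetry; your column-by-column computation (in particular the second-column cancellation that produces $\delta$) simply makes explicit what the paper compresses into a one-line citation. One small correction: the $\mathcal{A}$-symmetry alone maps $D_1$ only to $D_3$ and $D_5$, so to reach the even sectors $D_2$, $D_4$, $D_6$ you must also invoke the conjugation symmetry $M(x,k)=\mathcal{B}\overline{M(x,\overline{k})}\mathcal{B}$ from (\ref{Msymm}), under which your eigenvector-span argument for part $(e)$ still goes through, since conjugation followed by $\mathcal{B}$ preserves the span of $(1,1,1)^{T}$ and $(1,\omega,\omega^{2})^{T}$.
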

\begin{proof}
We only consider the case $k \in \bar{D}_1$. The asymptotics in the other sectors can be obtained from the symmetry $M(x,k) = \mathcal{A}M(x,\omega k) \mathcal{A}^{-1} = \mathcal{B}\overline{M(x,\overline{k})}\mathcal{B}$. Since $u_0,v_0$ satisfy Assumption \ref{originassumption}, $\mathfrak{s}^{(-2)} \neq 0$ and $\mathfrak{s}^{A(-2)} \neq 0$. Lemma \ref{M1XYlemma} provides expressions for $M_1$ and $M_1^A$ for $k \in \bar{D}_1\setminus \{0\}$ in terms of $X, Y, s, X^A, Y^A, s^A$.  
Using the expansions as $k \to 0$ of the latter set of functions (see Propositions \ref{XYat1prop}, \ref{sprop}, \ref{XAYAat1prop}, and \ref{sAprop}) and recalling Assumptions \ref{solitonlessassumption} and \ref{originassumption}, the lemma follows.
\end{proof}

\section{Proof of Theorem \ref{RHth}}\label{RHsec}
Suppose $\{u(x,t), v(x,t)\}$ is a Schwartz class solution of (\ref{boussinesqsystem}) with existence time $T \in (0, \infty]$ and initial data $u_0, v_0 \in \mathcal{S}(\R)$. Suppose Assumptions \ref{solitonlessassumption} and \ref{originassumption} hold and define the spectral functions $r_1(k)$ and $r_2(k)$ in terms of $u_0, v_0$ by (\ref{r1r2def}).
Define time-dependent eigenfunctions $\{M_n(x,t,k)\}_{n=1}^6$ by replacing $\mathsf{U}(x,k)$ with the time-dependent potential $\mathsf{U}(x,t,k)$ in the integral equations (\ref{Mndef}). 
Define the sectionally analytic function $M(x,t,k)$ by setting $M(x,t,k) = M_n(x,t,k)$ for $k \in D_n$. 

By Lemma \ref{Matinftylemma} and the definition (\ref{Xpdef}) of $X_{(p)}$, we have
$$\lim_{k\to \infty}k\big[(M(x,t,k))_{33} - 1\big] = -\frac{2}{3}  \int_{\infty}^{x} u(x^{\prime}, t) dx'.$$
Recalling that $u,v$ have rapid decay as $x \to \infty$ and that $u_t = v_x$, the formulas (\ref{recoveruv}) for $u$ and $v$ follow. On the other hand, uniqueness of the solution of the RH problem \ref{RH problem for M} is proved in Appendix \ref{appA}. Thus it only remains to verify that $M$ satisfies the RH problem \ref{RH problem for M}.

Property $(c)$ of RH problem \ref{RH problem for M} related to the asymptotics of $M$ as $k \to \infty$ follows from Lemma \ref{Matinftylemma} and the definition (\ref{Xpdef}) of $X_{(p)}$.
Property $(d)$ related to the asymptotics of $M$ as $k \to 0$ follows from Lemma \ref{Mat1lemma}, whereas property $(e)$ follows from the symmetries (\ref{Msymm}) of $M$ and Lemma \ref{QtildeQlemma}. To prove the remaining two properties (properties $(a)$ and $(b)$) of RH problem \ref{RH problem for M} we need the following lemma. 

\begin{lemma}\label{Mlaxlemma}
Let $n = 1, \dots, 6$. For each $k \in \bar{D}_n \setminus \{0\}$, $M_n(x,t,k)$ is a smooth function of $(x,t) \in \R \times [0,T)$ satisfying the Lax pair equations \eqref{Xlax}.
\end{lemma}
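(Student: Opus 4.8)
The plan is to treat the two halves of the Lax pair (\ref{Xlax}) separately. The $x$-equation $M_{n,x}-[\mathcal{L},M_n]=\mathsf{U}M_n$ holds for each fixed $t$ by exactly the construction used in Proposition \ref{Mnprop}: differentiating the Fredholm equation (\ref{Mndef}) in $x$ (with $\mathsf{U}$ now carrying its $t$-dependence) reproduces the $x$-part, so nothing new is needed there. Smoothness of $M_n$ in $(x,t)$ follows from the smooth dependence of the solution of the Fredholm equation on the parameter $t$: since $u,v$ are smooth and rapidly decaying in $x$, the kernel (\ref{Kjkernel}) and the inhomogeneity depend smoothly on $t$, the bounds of Proposition \ref{Mnprop} hold locally uniformly in $t$, and one may differentiate the resolvent representation (\ref{wifinal}) in $t$. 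The real content of the lemma is therefore the $t$-equation $M_{n,t}-[\mathcal{Z},M_n]=\mathsf{V}M_n$.

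To establish it, I would set $Q:=M_{n,t}-[\mathcal{Z},M_n]-\mathsf{V}M_n$ and show $Q\equiv0$. Writing $L=\mathcal{L}+\mathsf{U}$ and $Z=\mathcal{Z}+\mathsf{V}$, a direct computation using the already-established $x$-equation for $M_n$, the Jacobi identity, and the fact that $\mathcal{L}$ and $\mathcal{Z}$ are diagonal (hence commute), gives
\begin{align*}
Q_x-[\mathcal{L},Q]-\mathsf{U}Q=\big(L_t-Z_x+[L,Z]\big)M_n.
\end{align*}
The right-hand side vanishes: $L_t-Z_x+[L,Z]=0$ is precisely the zero-curvature condition for the $x$- and $t$-parts of (\ref{Xlax}), equivalent to the compatibility of (\ref{Lax pair tilde}) and hence to the assumption that $\{u,v\}$ solves (\ref{boussinesqsystem}). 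Thus $Q$ satisfies the same homogeneous linear $x$-equation $Q_x-[\mathcal{L},Q]=\mathsf{U}Q$ as $M_n$.

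It then remains to upgrade ``$Q$ solves the homogeneous $x$-equation'' to ``$Q\equiv0$'' using the normalization built into (\ref{Mndef}). Integrating $\partial_x(e^{-x\hat{\mathcal{L}}}Q)_{ij}=(e^{-x\hat{\mathcal{L}}}\mathsf{U}Q)_{ij}$ along the contour $\gamma_{ij}^n$ of (\ref{gammaijndef}) shows that $Q$ solves the Fredholm equation (\ref{Mndef}) with $\delta_{ij}$ replaced by $e^{x(l_i-l_j)}c_{ij}$, where $c_{ij}$ is the limit of $e^{-x'(l_i-l_j)}Q_{ij}(x')$ at the non-$x$ endpoint of $\gamma_{ij}^n$. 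For $k$ in the open sector $D_n$ one checks that each $c_{ij}$ vanishes: for $i\neq j$ the exponential decays strictly along $\gamma_{ij}^n$ while $Q$ stays bounded by Proposition \ref{Mnprop}$(c)$, and for $i=j$ one uses $z_i-z_i=0$, $\mathsf{V}\to0$ as $x\to\pm\infty$, and $(M_n)_{ii}\to1$, so that $\lim_{x\to+\infty}\partial_t(M_n)_{ii}=\partial_t(1)=0$. Consequently $Q$ solves the homogeneous Fredholm equation, and the invertibility of the associated operator established in Proposition \ref{Mnprop} (valid for $k\notin\mathcal{Q}$, and for $k\notin\{0\}$ under Assumption \ref{solitonlessassumption} by Lemma \ref{QtildeQlemma}) forces $Q\equiv0$ on $D_n\setminus\{0\}$. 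The conclusion on all of $\bar{D}_n\setminus\{0\}$ then follows by continuity.

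The step I expect to be the main obstacle is the vanishing of the diagonal boundary constants $c_{ii}$, which requires interchanging $\lim_{x\to+\infty}$ with $\partial_t$. This must be justified by showing that the convergence $(M_n)_{ii}(x,t,k)\to1$ is locally uniform in $t$ together with its $t$-derivatives—an estimate that, while routine, rests on having the bounds of Proposition \ref{Mnprop} hold uniformly on compact $t$-intervals. By contrast, the commutator identity of the second paragraph and the reduction to a homogeneous Fredholm equation are comparatively mechanical once the compatibility condition has been invoked.
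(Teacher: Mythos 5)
Your proposal is correct and follows essentially the same route as the paper: both introduce the defect $\chi_n = \partial_t M_n - [\mathcal{Z},M_n] - \mathsf{V}M_n$, use the zero-curvature condition $L_t - Z_x + [L,Z] = 0$ (equivalent to $\{u,v\}$ solving the system) to show it satisfies the $x$-part of the Lax pair, deduce that it solves the homogeneous Fredholm equation, conclude $\chi_n \equiv 0$ for $k$ off the Fredholm zero set $\mathcal{Q}$, and extend by continuity. The only cosmetic difference is that the paper obtains the vanishing of the diagonal boundary values by differentiating the Fredholm equation (\ref{Mndef}) in $t$, so that the resulting equation (\ref{Mntdef}) for $\partial_t M_n$ has no $\delta_{ij}$ inhomogeneity and hence $\partial_t M_n \to 0$ at the contour endpoints directly, sidestepping the limit/derivative interchange you flagged as the main obstacle.
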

\begin{proof}
For each fixed $t$, the eigenfunctions $M_n$ possess all the properties derived in Section \ref{specsec}; in particular, they depend smoothly on $x$ and satisfy the $x$-part of \eqref{Xlax}. 
Using the smoothness of $\{u(x,t), v(x,t)\}$ together with the decay assumption (\ref{rapiddecay}), differentiation of the representation (\ref{wifinal}) for $M_n$ shows that the $M_n$ depend smoothly on $t$ whenever $k \in \bar{D}_n\setminus \mathcal{Q}$, where $\mathcal{Q}$ is the set defined in (\ref{calQdef}). 
Furthermore, differentiation of (\ref{Mndef}) with respect to $t$ shows that if $k \in \bar{D}_n\setminus \mathcal{Q}$, then the derivative $\partial_tM_n$ satisfies the Fredholm equation
\begin{align}\nonumber
(\partial_tM_n)_{ij}(x,t,k) = &\; \int_{\gamma_{ij}^n} \left(e^{(x-x')\widehat{\mathcal{L}(k)}} (\mathsf{U}_t M_n)(x',t,k) \right)_{ij} dx' 
	\\\label{Mntdef}
& + \int_{\gamma_{ij}^n} \left(e^{(x-x')\widehat{\mathcal{L}(k)}} (\mathsf{U}\partial_tM_n)(x',t,k) \right)_{ij} dx', \qquad  i,j = 1, 2,3.
\end{align}
Suppose $k \in \bar{D}_n\setminus \mathcal{Q}$. The function $\check{M}_n := M_ne^{\mathcal{L} x + \mathcal{Z} t}$ satisfies the $x$-part in (\ref{Xhatlax}). Furthermore, since $\{u(x,t), v(x,t)\}$ solve (\ref{boussinesqsystem}), $L$ and $Z$ obey the compatibility condition $L_t - Z_x + [L, Z] = 0$. It follows that $\partial_t\check{M}_n - Z\check{M}_n$ also satisfies the $x$-part in (\ref{Xhatlax}), or, in other words, that 
$$\chi_n := (\partial_t\check{M}_n - Z\check{M}_n)e^{-\mathcal{L} x - \mathcal{Z} t}
= \partial_tM_n - [\mathcal{Z}, M_n] - \mathsf{V}M_n$$
satisfies the $x$-part in (\ref{Xlax}). 
For each $t \geq 0$, $M_n(x,t,k)$ is bounded for $x \in \R$ by (\ref{Mnbounded}) and $\mathsf{V}(x,t,k)$ has decay as $|x| \to \infty$. Moreover, we conclude from (\ref{Mndef}) and (\ref{Mntdef}) that the $(ij)$th entries of $M_n$ and $\partial_tM_n$ approach zero as $x \to + \infty$ if $\gamma_{ij}^n = (+\infty, x)$, whereas they approach zero as $x \to - \infty$ if $\gamma_{ij}^n = (-\infty, x)$. 
It follows that 
$$(\chi_n)_{ij} \to 0 \quad \text{as} \quad \begin{cases} \text{$x \to - \infty$ if $\gamma_{ij}^n = (-\infty, x)$}, \\ 
\text{$x \to + \infty$ if $\gamma_{ij}^n = (+\infty, x)$},
\end{cases}
\quad i,j = 1,2,3.$$
Hence $\chi_n$ satisfies the homogeneous version of the Fredholm equations (\ref{Mndef}) (i.e., the equations obtained from (\ref{Mndef}) by replacing $\delta_{ij}$ with zero). These homogeneous equations have no nonzero solution whenever the Fredholm determinant is nonzero.
 Hence $\chi_n$ vanishes identically for each $k \in \bar{D}_n\setminus \mathcal{Q}$. This shows that $M_n$ satisfies the $t$-part of (\ref{Xlax}) for $k \in \bar{D}_n\setminus \mathcal{Q}$. Since the right-hand side of this $t$-part extends continuously to $k \in \bar{D}_n\setminus \{0\}$, so does $\partial_tM_n$ (and by uniform convergence the limit in (\ref{Mnkjdef}) commutes with $\partial_t$, see e.g. \cite[Theorem 7.17]{R1976}). Thus the $t$-part of (\ref{Xlax}) holds for all $k \in \bar{D}_n \setminus \{0\}$. Recursive use of the $t$-part shows that $M_n$ depends smoothly on $t$ also for $k \in \mathcal{Q} \cap \bar{D}_n  \setminus \{0\}$. This completes the proof. 
\end{proof}

The next lemma completes the proof that $M$ satisfies the RH problem \ref{RH problem for M} and therefore also the proof of Theorem \ref{RHth}.

\begin{lemma}\label{Mxtjumplemma}
For each $(x,t) \in \R \times [0,T)$, $M(x,t,k)$ is an analytic function of $k \in \C \setminus \Gamma$ with continuous boundary values on $\Gamma\setminus \{0\}$. Moreover, $M$ satisfies the jump condition (\ref{Mjumpcondition}).
\end{lemma}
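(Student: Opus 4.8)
The plan is to establish the two assertions separately: first the analyticity of $M(x,t,\cdot)$ and the continuity of its boundary values for each fixed $(x,t)$, and then the jump relation (\ref{Mjumpcondition}), which I would propagate in $t$ starting from the jump at $t=0$ already proved in Lemma \ref{Mjumplemma}. For the analyticity and continuity in $k$, I would fix $(x,t)$ and note that the time-dependent Fredholm equations defining $M_n(x,t,k)$ are of exactly the same type as (\ref{Mndef}), now with the Schwartz-in-$x$ potential $\mathsf{U}(\cdot,t,k)$ in place of $\mathsf{U}(\cdot,0,k)$. Hence the Fredholm-determinant analysis in the proof of Proposition \ref{Mnprop} applies verbatim and shows that, for each $(x,t)$, $M_n(x,t,\cdot)$ is analytic on $D_n\setminus\mathcal{Q}$, extends continuously to $\bar{D}_n\setminus\mathcal{Q}$, and has $\det M_n=1$. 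To replace $\mathcal{Q}$ by $\{0\}$ I would invoke Lemma \ref{M1XYlemma} and Lemma \ref{QtildeQlemma} at time $t$: the only possible singularities off the origin come from the denominators $s_{11}$ and $s^A_{33}$, which are controlled by Assumption \ref{solitonlessassumption} (via the symmetries relating $s^A_{33}$ to $s^A_{11}$). Since these are diagonal scattering data, they are invariant under the trivial time evolution $s(t,k)=e^{t\widehat{\mathcal{Z}(k)}}s(0,k)$, so the solitonless condition persists for all $t\in[0,T)$ and $M(x,t,\cdot)$ is analytic on $\C\setminus\Gamma$ with continuous boundary values on $\Gamma\setminus\{0\}$.

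For the jump relation I would exploit that $M$ solves the full Lax pair. Fix a ray of $\Gamma$, say $(0,\infty)$, where $M_+=M_1$ and $M_-=M_6$, and set $\hat{M}_\pm:=M_\pm e^{\mathcal{L}x+\mathcal{Z}t}$. By Lemma \ref{Mlaxlemma} the boundary values $M_\pm$ satisfy (\ref{Xlax}) on $\bar{D}_n\setminus\{0\}$, so $\hat{M}_\pm$ satisfy the diagonalized Lax pair $\hat{X}_x=L\hat{X}$, $\hat{X}_t=Z\hat{X}$ of (\ref{Xhatlax}). Since both factors obey the same first-order linear equations and $\det M_n=1$ guarantees invertibility, a direct computation gives
\[
\partial_x\big(\hat{M}_-^{-1}\hat{M}_+\big) = \partial_t\big(\hat{M}_-^{-1}\hat{M}_+\big) = 0,
\]
so that $\hat{G}(k):=\hat{M}_-^{-1}\hat{M}_+$ is independent of $(x,t)$.

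Writing $G:=M_-^{-1}M_+$ and unwinding the conjugation, one finds
\[
G(x,t,k) = e^{x\widehat{\mathcal{L}(k)}+t\widehat{\mathcal{Z}(k)}}\hat{G}(k), \qquad \text{i.e.}\qquad G_{ij} = e^{\theta_{ij}}\hat{G}_{ij},
\]
since $\mathcal{L}$ and $\mathcal{Z}$ are diagonal and $(l_i-l_j)x+(z_i-z_j)t=\theta_{ij}$. Evaluating at $t=0$ and comparing with $v_1(x,0,k)$ from (\ref{vdef}) determines the constant matrix $\hat{G}$; for instance the $(1,2)$ entry of $v_1(x,0,k)$ is $-r_1(k)e^{-(l_2-l_1)x}$, which forces $\hat{G}_{12}=-r_1(k)$. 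Reinserting the phases then reproduces exactly $v_1(x,t,k)$ because $\theta_{12}=-\theta_{21}$, and the same computation handles the other entries. This proves $M_+=M_-v_1$ on $(0,\infty)$; the jump across $(-\infty,0)$ is obtained identically, and the jumps on the four complex rays follow from these two by the $\mathbb{Z}_3$ symmetry (\ref{Msymm}), which continues to hold for every $t$ because $\mathcal{L},\mathcal{Z},\mathsf{U},\mathsf{V}$ all respect the symmetries (\ref{Acaldef})--(\ref{Bcaldef}).

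The hard part will be the persistence of the solitonless condition to positive times: the analyticity of $M(x,t,\cdot)$ off $\Gamma$ hinges on showing that no new zeros of the Fredholm determinants enter $\bar{D}_n\setminus\{0\}$ as $t$ grows. I expect this to be the one delicate point, and I would settle it precisely through the $t$-independence of the diagonal scattering data noted above (equivalently, that the relevant Fredholm determinants are conserved), which keeps Assumption \ref{solitonlessassumption} in force on $[0,T)$. Once this is secured, the constancy of $\hat{G}$ makes the propagation of the jump from $t=0$ essentially automatic, and together with the asymptotic and origin properties already verified in the proof of Theorem \ref{RHth} this completes the verification that $M$ solves RH problem \ref{RH problem for M}.
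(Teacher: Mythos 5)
Your proposal is correct and its core coincides with the paper's own argument: for the jump, the paper likewise invokes Lemma \ref{Mlaxlemma} to conclude that the boundary values solve the full Lax pair, deduces that
\[
M_{n+1}(x,t,k)=M_n(x,t,k)\,e^{\hat{\mathcal{L}}x+\hat{\mathcal{Z}}t}\bigl(M_n(0,0,k)^{-1}M_{n+1}(0,0,k)\bigr)
\]
on $\bar{D}_n\cap\bar{D}_{n+1}$ --- which is precisely your statement that $\hat{M}_-^{-1}\hat{M}_+$ is independent of $(x,t)$ --- and then identifies the constant matrix from the $t=0$ jump of Lemma \ref{Mjumplemma}, exactly as you do. Where you genuinely differ is the analyticity assertion at positive times. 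The paper disposes of it in one sentence by citing Proposition \ref{Mnprop} and Lemma \ref{QtildeQlemma}, leaving implicit how the exceptional set $\mathcal{Q}$ (which a priori depends on $t$) is removed for $t>0$; in its framework this is handled inside Lemma \ref{Mlaxlemma}, where $M_n$ is continued past the Fredholm zeros by means of the $t$-part of \eqref{Xlax}, so a time-$t$ version of Assumption \ref{solitonlessassumption} is never invoked. You instead re-run the $t=0$ argument at time $t$, which obliges you to prove that Assumption \ref{solitonlessassumption} persists, and you do so by asserting the evolution law $s(t,k)=e^{t\widehat{\mathcal{Z}(k)}}s(0,k)$. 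That law is correct and does yield what you need (conjugation by the diagonal matrix $e^{t\mathcal{Z}}$ fixes the diagonal entries of $s$, hence also of $s^A$), but it is proved nowhere in the paper, so your route carries an extra obligation: establish it, for instance by an argument parallel to the $\chi_n$ computation in Lemma \ref{Mlaxlemma}, showing that $Xe^{\mathcal{L}x+\mathcal{Z}t}$ and $Ye^{\mathcal{L}x+\mathcal{Z}t}$ solve both Lax equations so that the matrix relating them is $(x,t)$-independent. This costs an additional lemma, but it buys an explicit and reusable fact (solitonlessness for all $t\in[0,T)$) that the paper's terser treatment leaves buried.
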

\begin{proof}
The analyticity and the existence of continuous boundary values are a consequence of Proposition \ref{Mnprop} and Lemma \ref{QtildeQlemma}.
By Lemma \ref{Mlaxlemma}, $M_n$ satisfies the Lax pair equations \eqref{Xlax}. These equations imply that the functions $M_n$ are related by
$$M_{n+1}(x,t,k) = M_n(x,t,k) e^{\hat{\mathcal{L}}x + \hat{\mathcal{Z}}t}\big(M_n(0,0,k)^{-1}M_{n+1}(0,0,k)\big),$$
for $(x,t) \in \R \times [0,T)$ and $k \in \bar{D}_{n}\cap \bar{D}_{n+1}$, $n=1, \dots,6$ (with $\bar{D}_{7}:=\bar{D}_{1}$ and $M_{7}:=M_{1}$). Equation (\ref{Mjumpcondition}) now follows from Lemma \ref{Mjumplemma}.
\end{proof}

\section{Numerical example}\label{numericalsec}
The functions $X$, $Y$, $s$, $X^{A}$, $Y^{A}$, $s^{A}$, $M$ can all be computed numerically. All properties of these functions listed in Sections \ref{specsec} and \ref{Msec} have been verified numerically for various choices of the initial data. The aim of this section is to illustrate the behavior of some of these functions as $k \to 0$ for a particular choice of $u_{0}$ and $v_{0}$. Our numerics are based on the Julia package ``ApproxFun" developed by Sheehan Olver (see \texttt{https://github.com/JuliaApproximation/ApproxFun.jl} and \cite{OT2013, OTS2020}).
For the illustration, we take the following compactly supported initial data $u_{0},v_{0} \in \mathcal{S}(\mathbb{R})$:
\begin{subequations}\label{u0v0num}
\begin{align}
& u_{0}(x) = \begin{cases}
-8(1+\cos(3 x))\exp\big(\frac{-2}{1-x^{2}}\big), & \mbox{if } x \in (-1,1), \\
0, & \mbox{otherwise},
\end{cases}  \\
& v_{0}(x) = \begin{cases}
8(1+x)\exp\big(\frac{-2}{1-x^{2}}\big), & \mbox{if } x \in (-1,1), \\
0, & \mbox{otherwise}.
\end{cases} 
\end{align}
\end{subequations}
The functions $\alpha_{1}$, $\beta_{1}$, $\gamma_{1}$, $\delta_{1,1}$, $\delta_{1,2}$, and $\delta_{1,3}$ describe the behavior of $X$ as $k \to 0$ (see Proposition \ref{XYat1prop}) and their graphs as functions of $x$ are displayed in Figure \ref{fig: alphatodelta}. We observe that $\alpha_{1}$, $\beta_{1}$, $\gamma_{1}$, and $\delta_{1,j}-1/3$, $j=1,2,3$, have decay as $x \to +\infty$. 
\begin{figure}
\begin{center}
\includegraphics[scale=0.25]{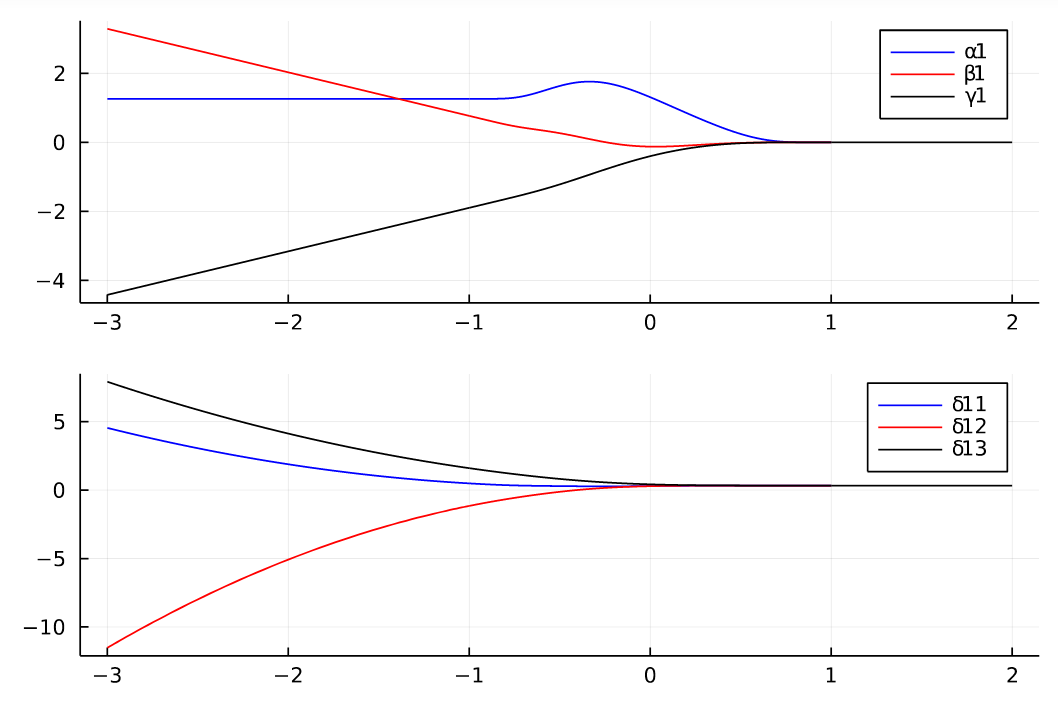}
\end{center}
\begin{figuretext}\label{fig: alphatodelta}The graphs of the functions $\alpha_{1}(x)$, $\beta_{1}(x)$, $\gamma_{1}(x)$, $\delta_{1,1}(x)$, $\delta_{1,2}(x)$, and $\delta_{1,3}(x)$ associated to the initial data \eqref{u0v0num}. 
\end{figuretext}
\end{figure}
\begin{figure}[h!]
\begin{center}
\includegraphics[scale=0.25]{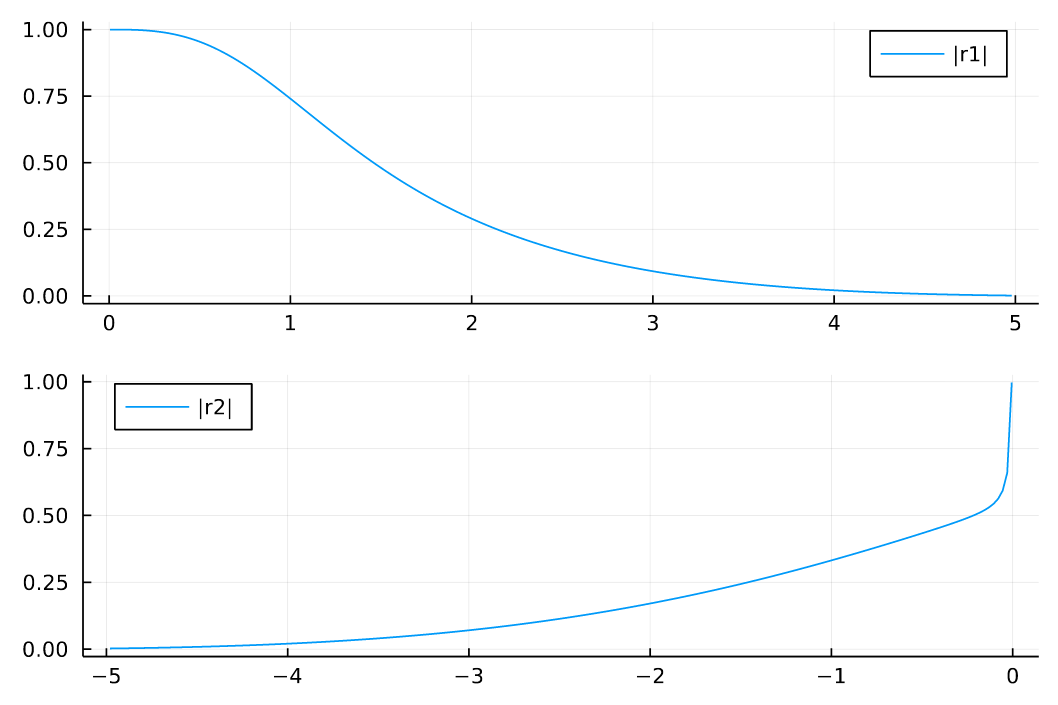}
\end{center}
\begin{figuretext}\label{fig: r1r2 plot}
The graphs of $|r_{1}(k)|$ and $|r_{2}(k)|$ for the initial data \eqref{u0v0num}. 
\end{figuretext}
\end{figure}
We also observe that $\alpha_{1}$ is bounded on $\mathbb{R}$, that $\beta_{1}$ and $\gamma_{1}$ grow linearly as $x \to -\infty$, and that $\delta_{1,j}$, $j=1,2,3$ seem to grow quadratically at $-\infty$. These observations are consistent with the bounds \eqref{bound alphai}-\eqref{bound deltai}. The value of $\mathfrak{s}^{(-2)}$ defined in \eqref{spm2p} is 1.26068... and we obtain excellent numerical agreement for the expansion \eqref{s at 0}. 
Finally, Figure \ref{fig: r1r2 plot} shows the graphs of $|r_1(k)|$ and $|r_2(k)|$ for $k > 0$ and $k < 0$, respectively. In agreement with Theorem \ref{r1r2th}, we observe that $|r_{1}(0)| = |r_{2}(0)| = 1$, that $|r_{1}(k)| \to 0$ as $k \to + \infty$, and that $|r_{2}(k)| \to 0$ as $k \to - \infty$. We also mention, as an interesting aside, that if Assumption \ref{solitonlessassumption} is relaxed, then it is possible to find examples with $|r_1(k)| > 1$ for some small $k >0$ and/or with $|r_2(k)| > 1$ for some small $k <0$. For the KdV equation, the reflection coefficient is bounded above by $1$ for all $k$ (see e.g. \cite{DVZ1994}); this shows that the Boussinesq equation allows for a more general behavior.

\appendix

\section{Uniqueness for RH problem \ref{RH problem for M}}\label{appA}

In this appendix, we show the following uniqueness result.

\begin{proposition}[Uniqueness of solution for RH problem \ref{RH problem for M}]\label{uniquenessprop}
Let $r_1:(0,\infty) \to \C$ and $r_2:(-\infty,0) \to \C$ be two functions which satisfy properties $(i)$-$(iv)$ of Theorem \ref{r1r2th}.
Then the solution of the RH problem \ref{RH problem for M} is unique, if it exists.
\end{proposition}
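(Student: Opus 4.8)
The plan is to take two solutions $M$ and $\tilde M$ of RH problem \ref{RH problem for M} and prove that the quotient $E := M \tilde M^{-1}$ equals $I$. I would first show $\det M \equiv 1$ for every solution. Each jump matrix $v_j$ in (\ref{vdef}) has unit determinant, so $\det M$ is analytic across $\Gamma$, hence on all of $\C \setminus \{0\}$, and $\det M \to 1$ as $k \to \infty$. The $\mathbb{Z}_3$ symmetry in (\ref{singRHsymm}) gives $\det M(k) = \det M(\omega k)$, so $\det M$ is a function of $k^3$; since the three columns of $M$ have pole orders at most $2,1,0$ at the origin by (\ref{explicit Mcalpm2p})--(\ref{explicit Mcalp0p third column}), we get $\det M = 1 + c\,k^{-3}$. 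The $k^{-3}$ coefficient is the determinant of the matrix formed by the leading column coefficients $\alpha\,\omega(1,1,1)^T$, $\delta(1-\omega)(1,1,1)^T$, $\epsilon(1,1,1)^T$, which are all proportional to $(1,1,1)^T$ and hence linearly dependent; thus $c = 0$ and $\det M \equiv 1$. Consequently $M$ is invertible off the origin, $E$ is analytic on $\C\setminus\{0\}$, $E \to I$ at infinity, $\det E \equiv 1$, and $E$ inherits the symmetries (\ref{singRHsymm}).

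Next I would bound the singularity of $E$ at $0$. For a general solution the pole coefficients $\tilde{\mathcal{N}}_1^{(l)}$ of $\tilde M^{-1}$ are determined by those of $\tilde M$ together with $\det \tilde M = 1$, and have the shapes (\ref{explicit Ncalpm2p})--(\ref{explicit Ncalp0p third column}) by the same computation as in Lemma \ref{Mat1lemma}. Multiplying the expansions of $M$ and $\tilde M^{-1}$ and using these explicit shapes, the coefficients of $k^{-4}$ and $k^{-3}$ in $M\tilde M^{-1}$ vanish: for instance $\mathcal{M}_1^{(-2)}\tilde{\mathcal{N}}_1^{(-2)} = 0$ because the single nonzero (first) column of $\mathcal{M}_1^{(-2)}$ only meets the zero rows of $\tilde{\mathcal{N}}_1^{(-2)}$, while the $k^{-3}$ terms vanish because the third column of $\mathcal{M}_1^{(-1)}$ and the first row of $\tilde{\mathcal{N}}_1^{(-1)}$ are zero. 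Hence $E$ has at worst a double pole at the origin, and Liouville's theorem gives the exact rational form $E = I + E_{-1}k^{-1} + E_{-2}k^{-2}$ with constant matrices $E_{-1}$, $E_{-2}$.

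It remains to show $E_{-1} = E_{-2} = 0$. Substituting the rational form into the $\mathbb{Z}_3$ symmetry gives $(E_{-1})_{ij} = \omega^2 (E_{-1})_{i-1,j-1}$ and $(E_{-2})_{ij} = \omega (E_{-2})_{i-1,j-1}$ (indices mod $3$). Expanding $M = E\tilde M$ at infinity yields $M^{(1)} = E_{-1} + \tilde M^{(1)}$ and $M^{(2)} = E_{-2} + E_{-1}\tilde M^{(1)} + \tilde M^{(2)}$. The conditions (\ref{singRHMatinftyb}) for $M$ and $\tilde M$ then force $(E_{-1})_{12} = (E_{-1})_{13} = 0$, and the $\mathbb{Z}_3$ relation propagates this to all off-diagonal entries, so $E_{-1} = a\,\diag(\omega^2,\omega,1)$ for a scalar $a$. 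For $E_{-2}$, the same multiplication shows that each contribution to the $k^{-2}$ coefficient of $M\tilde M^{-1}$ is a matrix with all three rows equal (this uses the first-column shape of $\mathcal{M}_1^{(-2)}$, the second-column shape of $\mathcal{M}_1^{(-1)}$, and the third column (\ref{explicit Mcalp0p third column}) of $\mathcal{M}_1^{(0)}$); together with the $\mathbb{Z}_3$ relation this gives $E_{-2} = b\,(1,1,1)^T(\omega,\omega^2,1)$. Since $E_{-1}$ is diagonal and $\tilde M^{(1)}_{12} = \tilde M^{(1)}_{13} = 0$, the remaining normalization $M^{(2)}_{12} + M^{(2)}_{13} = 0$ reduces to $b(\omega^2 + 1) = 0$, i.e. $b = 0$ and $E_{-2} = 0$. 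Finally, with $E = I + a\,\diag(\omega^2,\omega,1)k^{-1}$ one computes $\det E = 1 + a^3 k^{-3}$; as $\det E \equiv 1$ this gives $a = 0$. Hence $E \equiv I$ and $M = \tilde M$.

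The crux of the argument, and the step I expect to be most delicate, is the behavior at the origin: one has to confirm that $M\tilde M^{-1}$ is no worse than a double pole and then determine exactly which combinations of pole coefficients the normalization (\ref{singRHMatinftyb}) removes. The subtle feature is that the symmetries and a single scalar condition at infinity do not suffice to kill $E_{-2}$ — the reality symmetry together with the $\mathbb{Z}_3$ symmetry and one linear constraint leave a genuine one-parameter family — so the precise algebraic forms of the coefficients $\mathcal{M}_1^{(l)}$ and $\tilde{\mathcal{N}}_1^{(l)}$ from Lemma \ref{Mat1lemma}, in particular the ``all rows equal'' structure of the $k^{-2}$ term, are indispensable. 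It is exactly the triple of conditions (\ref{singRHMatinftyb}), which compensate for the freedom introduced by the double pole at $k=0$, that pins the solution down uniquely.
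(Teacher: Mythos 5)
Your proof is correct and follows essentially the same route as the paper's Appendix \ref{appA}: establish $\det M \equiv 1$, show the quotient $E = M\tilde{M}^{-1}$ has at most a double pole at $k=0$ by exploiting the cofactor structure of $\tilde{M}^{-1}$ together with the prescribed shapes (\ref{explicit Mcalpm2p})--(\ref{explicit Mcalp0p third column}), apply Liouville to get $E = I + E_{-1}k^{-1} + E_{-2}k^{-2}$, and then use the $\mathbb{Z}_3$ symmetry, the conditions (\ref{singRHMatinftyb}), the all-rows-equal structure of the $k^{-2}$ coefficient, and $\det E \equiv 1$ to force $E = I$. The only executional differences are cosmetic: you derive $\det M \equiv 1$ via $\mathbb{Z}_3$-invariance plus column degeneracy at the origin (the paper instead shows a simple pole at $k=0$ directly and kills it using the $O(k^{-3})$ behavior of $\det M$ at infinity), and you eliminate $E_{-1}$, $E_{-2}$ by propagating (\ref{singRHMatinftyb}) through $M = E\tilde{M}$ rather than by matching coefficients in the product of the expansions at infinity as in (\ref{PMexpression})--(\ref{QMexpression}).
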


The proof of Proposition \ref{uniquenessprop} proceeds through a series of lemma. We assume throughout the appendix that $r_1:(0,\infty) \to \C$ and $r_2:(-\infty,0) \to \C$ satisfy $(i)$-$(iv)$ of Theorem \ref{r1r2th}.

\begin{lemma}\label{expansionatinftylemma}
Suppose that $N$ is a $3 \times 3$ matrix which obeys the symmetries (\ref{singRHsymm}) and which admits an asymptotic expansion
\begin{align}\label{asymp for N at inf}
& N(x,t,k) = N^{(0)}(x,t) + \frac{N^{(1)}(x,t)}{k} + \frac{N^{(2)}(x,t)}{k^2} + O(k^{-3}), \qquad k \to \infty.
\end{align}
Then the coefficient matrices $\{N^{(j)}\}_0^2$ have the form
\begin{align*}
& N^{(0)} = \begin{pmatrix} N_{33}^{(0)} & \overline{N_{13}^{(0)}} & N_{13}^{(0)} \\ 
N_{13}^{(0)} & N_{33}^{(0)} & \overline{N_{13}^{(0)}} \\ 
\overline{N_{13}^{(0)}} & N_{13}^{(0)} &  N_{33}^{(0)}
\end{pmatrix}, 
\qquad N^{(1)} = \begin{pmatrix} \omega^2 N_{33}^{(1)} & \omega \overline{N_{13}^{(1)}} & N_{13}^{(1)} \\ 
\omega^2 N_{13}^{(1)} & \omega N_{33}^{(1)} & \overline{N_{13}^{(1)}} \\ 
\omega^2 \overline{N_{13}^{(1)}} & \omega N_{13}^{(1)} &  N_{33}^{(1)}
\end{pmatrix}, 
	\\
& N^{(2)} = \begin{pmatrix} \omega N_{33}^{(2)} & \omega^2 \overline{N_{13}^{(2)}} & N_{13}^{(2)} \\ 
\omega N_{13}^{(2)} & \omega^2 N_{33}^{(2)} & \overline{N_{13}^{(2)}} \\ 
\omega \overline{N_{13}^{(2)}} & \omega^2 N_{13}^{(2)} &  N_{33}^{(2)}
\end{pmatrix}, 
\end{align*}
where $\{N_{33}^{(j)}\}_0^2$ are real-valued functions and $\{N_{13}^{(j)}\}_0^2$ are complex-valued functions.
\end{lemma}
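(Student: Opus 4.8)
The plan is to substitute the asymptotic expansion \eqref{asymp for N at inf} into each of the two symmetries in \eqref{singRHsymm} and to match coefficients of like powers of $k$; the result is then pure linear algebra.

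First I would exploit the $\mathbb{Z}_{3}$ symmetry $N(x,t,k) = \mathcal{A} N(x,t,\omega k)\mathcal{A}^{-1}$. Expanding the right-hand side and using $\omega^{-1} = \omega^{2}$, $\omega^{-2}=\omega$, the coefficient of $k^{-j}$ in $\mathcal{A} N(\omega k)\mathcal{A}^{-1}$ equals $\omega^{-j}\mathcal{A}N^{(j)}\mathcal{A}^{-1}$; comparing with the coefficient $N^{(j)}$ of $N(k)$ yields
\begin{align*}
\mathcal{A}N^{(j)}\mathcal{A}^{-1} = \omega^{j}N^{(j)}, \qquad j = 0,1,2.
\end{align*}
Conjugation by $\mathcal{A}$ cyclically permutes the matrix entries according to the $3$-cycle $(1\,2\,3)$ acting simultaneously on row and column indices, so it permutes the nine entries along the diagonal orbit $\{(1,1),(2,2),(3,3)\}$ and the two off-diagonal orbits $\{(1,2),(2,3),(3,1)\}$ and $\{(1,3),(2,1),(3,2)\}$. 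On each orbit the relation above is a cyclic eigenvalue equation, so within an orbit all three entries are determined by a single one up to explicit factors of $\omega$. Solving these relations (for instance, on the diagonal one obtains $N^{(1)}_{11}=\omega^{2} N^{(1)}_{33}$, $N^{(1)}_{22}=\omega N^{(1)}_{33}$, and analogously for $N^{(2)}$) reproduces exactly the $\omega$-patterns displayed in the statement, with $N^{(j)}_{33}$ parametrizing the diagonal orbit and $N^{(j)}_{13}$ parametrizing one off-diagonal orbit.

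Next I would impose the $\mathbb{Z}_{2}$ symmetry $N(x,t,k)=\mathcal{B}\,\overline{N(x,t,\overline{k})}\,\mathcal{B}$. Since $\overline{N(\overline{k})} = \sum_{j} \overline{N^{(j)}}\,k^{-j}$, matching powers of $k$ gives
\begin{align*}
N^{(j)}=\mathcal{B}\,\overline{N^{(j)}}\,\mathcal{B}, \qquad j = 0,1,2.
\end{align*}
Conjugation by $\mathcal{B}$ swaps the indices $1$ and $2$ and fixes $3$, so this relation forces $N^{(j)}_{33}=\overline{N^{(j)}_{33}}$ (whence $N^{(j)}_{33}$ is real) and ties the two off-diagonal orbits together by complex conjugation, e.g.\ $N^{(j)}_{13}=\overline{N^{(j)}_{23}}$. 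Feeding the $\omega$-relations from the previous step into these conjugation identities expresses every entry of the second off-diagonal orbit as the appropriate $\omega$-multiple of $\overline{N^{(j)}_{13}}$ (for example $N^{(1)}_{12}=\omega\,\overline{N^{(1)}_{13}}$ and $N^{(2)}_{12}=\omega^{2}\,\overline{N^{(2)}_{13}}$), which is precisely the claimed form. I would finish by recording the resulting three matrices.

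There is no genuine obstacle here; the only point requiring care is bookkeeping---tracking the powers of $\omega$ produced both by the rescaling $k\mapsto\omega k$ and by the cyclic action of $\mathcal{A}$, and verifying that the two symmetries are mutually consistent on each orbit (which they must be, since $\mathcal{A}$ and $\mathcal{B}$ generate the symmetry group already present in $L$ and $Z$ via \eqref{Acaldef}--\eqref{Bcaldef}).
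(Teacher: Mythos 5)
Your proposal is correct and takes essentially the same route as the paper's proof: substitute the expansion into the symmetries, match coefficients of each power of $k$ to obtain $N^{(0)}=\mathcal{A}N^{(0)}\mathcal{A}^{-1}=\mathcal{B}\overline{N^{(0)}}\mathcal{B}$, $N^{(1)}=\omega^{2}\mathcal{A}N^{(1)}\mathcal{A}^{-1}=\mathcal{B}\overline{N^{(1)}}\mathcal{B}$, $N^{(2)}=\omega\mathcal{A}N^{(2)}\mathcal{A}^{-1}=\mathcal{B}\overline{N^{(2)}}\mathcal{B}$, and then read off the entry relations from the conjugation actions of $\mathcal{A}$ and $\mathcal{B}$. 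The paper states this tersely; your orbit-by-orbit bookkeeping (with the correct factors, e.g.\ $N^{(1)}_{11}=\omega^{2}N^{(1)}_{33}$ and $N^{(1)}_{12}=\omega\overline{N^{(1)}_{13}}$) is simply the worked-out detail of the same argument.
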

\begin{proof}
It suffices to combine the large $k$ asymptotics \eqref{asymp for N at inf} with the symmetries (\ref{singRHsymm}). 
After identifying the coefficients of different powers of $k$, we obtain
\begin{align*}
& N^{(0)} =  \mathcal{A}N^{(0)}\mathcal{A}^{-1} = \mathcal{B}\overline{N^{(0)}}\mathcal{B}, 
\qquad
N^{(1)} = \omega^2 \mathcal{A}N^{(1)}\mathcal{A}^{-1} = \mathcal{B}\overline{N^{(1)}}\mathcal{B}, 
	\\
& N^{(2)} = \omega \mathcal{A}N^{(2)}\mathcal{A}^{-1} = \mathcal{B}\overline{N^{(2)}}\mathcal{B}.
\end{align*}
Recalling the definitions (\ref{def of Acal and Bcal}) of $\mathcal{A}$ and $\mathcal{B}$, the lemma follows.
\end{proof}

\begin{lemma}[Asymptotics of $M$ as $k \to \infty$]
If $M$ satisfies the RH problem \ref{RH problem for M}, then, as $k \to \infty$,
\begin{align}\nonumber
M(x,t,k) = & \; I + \frac{M_{33}^{(1)}}{k} \begin{pmatrix} \omega^2 & 0 & 0 \\ 
0 & \omega & 0 \\ 
0 & 0 & 1
\end{pmatrix} 
+ \frac{M_{33}^{(2)}}{k^2}\begin{pmatrix} 
\omega  & 0 & 0 \\ 
0 & \omega^2 & 0 \\ 
0 & 0 &  1
\end{pmatrix}  
	\\ \label{singMatinfty}
&  + \frac{\tilde{M}_{13}^{(2)}}{(1-\omega)k^2}\begin{pmatrix} 
0 & 1 & -1 \\ 
-\omega & 0 & \omega \\ 
\omega^2 & -\omega^2  &  0
\end{pmatrix} + O(k^{-3}),  
\end{align}
\color{black}
where $M_{33}^{(1)}(x,t)$, $M_{33}^{(2)}(x,t)$, and $\tilde{M}_{13}^{(2)}(x,t) := -(1-\omega) M_{13}^{(2)}(x,t)$ are real-valued functions of $x$ and $t$.
\end{lemma}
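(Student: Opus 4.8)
The plan is to feed the matrix $M$ directly into Lemma \ref{expansionatinftylemma}. Since $M$ solves the RH problem \ref{RH problem for M}, it obeys the symmetries (\ref{singRHsymm}) and, by property $(c)$, admits the asymptotic expansion $M = I + M^{(1)}/k + M^{(2)}/k^2 + O(k^{-3})$ as $k \to \infty$. Thus the hypotheses of Lemma \ref{expansionatinftylemma} hold with $N = M$ and the identifications $N^{(0)} = I$, $N^{(1)} = M^{(1)}$, $N^{(2)} = M^{(2)}$. The lemma pins down the entrywise structure of each coefficient in terms of a single real scalar $N_{33}^{(j)}$ and a single complex scalar $N_{13}^{(j)}$, so the remaining work is to impose the normalization conditions (\ref{singRHMatinftyb}) and simplify.

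First I would treat the zeroth and first order terms. Matching $N^{(0)} = I$ against the form supplied by Lemma \ref{expansionatinftylemma} forces $N_{33}^{(0)} = 1$ and $N_{13}^{(0)} = 0$, consistent with the normalization $M \to I$. For the first order term, the condition $M_{13}^{(1)} = 0$ from (\ref{singRHMatinftyb}) reads $N_{13}^{(1)} = 0$ in the notation of the lemma, and then $M_{12}^{(1)} = \omega \overline{N_{13}^{(1)}} = 0$ is automatic. Hence $M^{(1)} = N_{33}^{(1)}\,\diag(\omega^2, \omega, 1)$ with $N_{33}^{(1)}$ real, which is exactly the claimed first order term upon setting $M_{33}^{(1)} := N_{33}^{(1)}$.

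The second order term is where the only genuine computation occurs. Writing $z := M_{13}^{(2)} = N_{13}^{(2)}$, the form of $N^{(2)}$ from Lemma \ref{expansionatinftylemma} gives $M_{12}^{(2)} = \omega^2 \bar{z}$, so the constraint $M_{12}^{(2)} + M_{13}^{(2)} = 0$ becomes $\omega^2 \bar{z} + z = 0$, i.e. $\bar{z} = -\omega z$. Introducing $\tilde{M}_{13}^{(2)} := -(1-\omega) z$, a short calculation using $\bar{z} = -\omega z$ and $\omega^3 = 1$ gives $\overline{\tilde{M}_{13}^{(2)}} = -(1-\omega^2)\bar{z} = (\omega - 1) z = \tilde{M}_{13}^{(2)}$, so $\tilde{M}_{13}^{(2)}$ is real. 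Substituting $z = \tilde{M}_{13}^{(2)}/(\omega-1)$, so that the common off-diagonal factor becomes $\tilde{M}_{13}^{(2)}/(1-\omega) = -z$, together with $\bar{z} = -\omega z$ into the six off-diagonal entries of $N^{(2)}$, and keeping the diagonal piece $N_{33}^{(2)}\,\diag(\omega,\omega^2,1)$ with $M_{33}^{(2)} := N_{33}^{(2)}$ real, reproduces precisely the two matrices displayed in (\ref{singMatinfty}).

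The main obstacle is purely one of bookkeeping: one must verify that all six off-diagonal entries of $N^{(2)}$ collapse onto the single template $\bigl(\begin{smallmatrix} 0 & 1 & -1 \\ -\omega & 0 & \omega \\ \omega^2 & -\omega^2 & 0 \end{smallmatrix}\bigr)$ with the correct powers of $\omega$. This is a direct but slightly delicate check in the arithmetic of cube roots of unity (for instance, the $(1,2)$ entry $\omega^2\bar{z}$ must equal $-z$, which follows from $\bar z = -\omega z$ since $\omega^2\bar z = -\omega^3 z = -z$), and once all six entries are confirmed the lemma is established.
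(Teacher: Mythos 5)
Your proposal is correct and follows essentially the same route as the paper: apply Lemma \ref{expansionatinftylemma} to $M$ and then impose the normalization conditions (\ref{singRHMatinftyb}), with the key observation being that $M_{12}^{(2)} + M_{13}^{(2)} = 0$ forces $\overline{M_{13}^{(2)}} = -\omega M_{13}^{(2)}$, which makes $(1-\omega)M_{13}^{(2)}$ real and collapses the off-diagonal entries onto the stated template. Your entrywise verification (e.g.\ $\omega^2\overline{z} = -\omega^3 z = -z$ for the $(1,2)$ entry) is exactly the computation the paper leaves implicit.
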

\begin{proof}
From Lemma \ref{expansionatinftylemma} and the conditions in (\ref{singRHMatinftyb}), we see that, as $k \to \infty$,
\begin{align*}
M(x,t,k) = I + \frac{M_{33}^{(1)}}{k} \begin{pmatrix} \omega^2 & 0 & 0 \\ 
0 & \omega & 0 \\ 
0 & 0 & 1
\end{pmatrix} 
+ \frac{1}{k^2}\begin{pmatrix} \omega M_{33}^{(2)} & \omega^2 \overline{M_{13}^{(2)}} & M_{13}^{(2)} \\ 
\omega M_{13}^{(2)} & \omega^2 M_{33}^{(2)} & \overline{M_{13}^{(2)}} \\ 
\omega \overline{M_{13}^{(2)}} & \omega^2 M_{13}^{(2)} &  M_{33}^{(2)}
\end{pmatrix} + O(k^{-3}).
\end{align*}
In particular, the last condition in (\ref{singRHMatinftyb}) implies that $\omega^2 \overline{M_{13}^{(2)}} + M_{13}^{(2)} = 0$, i.e., that $(1-\omega) M_{13}^{(2)}$ is real-valued.
\end{proof}

\begin{lemma}[Unit determinant of $M$]\label{unitdetlemma}
If $M$ is a solution of RH problem \ref{RH problem for M}, then $M$ has unit determinant. 
\end{lemma}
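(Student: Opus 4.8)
The plan is to show that $\det M$ extends to a single rational function of $k$, normalized to $1$ at infinity, whose only possibly surviving Laurent coefficients at the origin all vanish. First I would check that each of the six jump matrices in (\ref{vdef}) has unit determinant: every $v_j$ equals the identity outside a single $2\times 2$ block of the form $\left(\begin{smallmatrix} 1 & -r\,e^{-\theta}\\ r^{*}e^{\theta} & 1-|r|^2\end{smallmatrix}\right)$ (or its transpose), whose determinant is $1-|r|^2+|r|^2=1$. Consequently $\det M_+ = \det M_-\,\det v = \det M_-$ on $\Gamma\setminus\{0\}$, so $\det M$ has no jump. Since $\det M$ is analytic off $\Gamma$, has continuous boundary values from both sides of $\Gamma\setminus\{0\}$ by property $(b)$, and these agree, Morera's theorem shows that $\det M$ continues analytically to $\C\setminus\{0\}$. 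Property $(c)$ then gives $\det M\to 1$ as $k\to\infty$.

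Next I would control the singularity at $k=0$ via the column structure of the expansion (\ref{singRHMat0})–(\ref{explicit Mcalp0p third column}). The explicit forms (\ref{explicit Mcalpm2p}) and (\ref{explicit Mcalpm1p}) show that the second and third columns of $\mathcal{M}_1^{(-2)}$ vanish and the third column of $\mathcal{M}_1^{(-1)}$ vanishes. Hence, as $k\to 0$ in $\bar{D}_1$, the first column of $M$ is $O(k^{-2})$, the second is $O(k^{-1})$, and the third is $O(1)$; by the symmetry (\ref{singRHsymm}) the same column orders hold in every sector. Multilinearity of the determinant then gives $\det M = O(k^{-3})$ near $0$, so the origin is a pole of order at most three. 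Being analytic on $\C\setminus\{0\}$ with a finite-order pole at $0$ and the limit $1$ at $\infty$, $\det M$ is forced to be rational of the form
\begin{align*}
\det M = 1 + \frac{d_{-1}}{k} + \frac{d_{-2}}{k^2} + \frac{d_{-3}}{k^3}.
\end{align*}

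Then I would exploit the $\mathbb{Z}_3$ symmetry. From $M(x,t,k)=\mathcal{A}M(x,t,\omega k)\mathcal{A}^{-1}$ in (\ref{singRHsymm}) we obtain $\det M(k)=\det M(\omega k)$, which forces $d_{-1}\omega^{-1}=d_{-1}$ and $d_{-2}\omega^{-2}=d_{-2}$; since $\omega^{-1},\omega^{-2}\neq 1$ this yields $d_{-1}=d_{-2}=0$. It remains only to compute $d_{-3}$. By the degree count in the previous paragraph, the coefficient of $k^{-3}$ in $\det M$ equals $\det[a_{-2},\,b_{-1},\,c_0]$, where $a_{-2}$, $b_{-1}$, $c_0$ are the leading vectors of the first, second, and third columns of $M$ at the origin. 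From (\ref{explicit Mcalpm2p}), (\ref{explicit Mcalpm1p}), and (\ref{explicit Mcalp0p third column}) each of these is a scalar multiple of $(1,1,1)^T$: $a_{-2}=\alpha\omega(1,1,1)^T$, $b_{-1}=\delta(1-\omega)(1,1,1)^T$, and $c_0=\epsilon(1,1,1)^T$. A determinant whose three columns are all proportional to one vector vanishes, so $d_{-3}=0$ and therefore $\det M\equiv 1$.

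The delicate part is the bookkeeping of column orders at $k=0$: the whole cancellation rests on the precise vanishing pattern of the columns of $\mathcal{M}_1^{(-2)}$ and $\mathcal{M}_1^{(-1)}$ recorded in (\ref{explicit Mcalpm2p})–(\ref{explicit Mcalp0p third column}), together with the fact that all three leading columns point along $(1,1,1)^T$. A cruder estimate that ignores which columns are singular would only give a pole of order six at the origin and would not close the argument even after the symmetry reduction; the sharper $O(k^{-3})$ bound, combined with the $\mathbb{Z}_3$ symmetry killing $d_{-1},d_{-2}$ and the proportionality killing $d_{-3}$, is what makes the proof go through.
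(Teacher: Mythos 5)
Your proof is correct, but it closes the argument by a genuinely different route than the paper. Both proofs share the same skeleton: every $v_j$ in (\ref{vdef}) has unit determinant, so $\det M$ has no jump and extends analytically to $\C\setminus\{0\}$, tends to $1$ at infinity, and has a finite-order pole at the origin, hence is a polynomial in $1/k$. From there the paper does a sharper direct computation at $k=0$: the same proportionality-to-$(1,1,1)^T$ structure that you use to kill $d_{-3}$ also kills the coefficient of $k^{-2}$ (every degree $-2$ term in the multilinear expansion of the determinant contains two columns proportional to $(1,1,1)^T$), so $\det M = 1 + f/k$ has at most a \emph{simple} pole; the remaining coefficient $f$ is then eliminated using the refined large-$k$ expansion (\ref{singMatinfty}), which rests on the extra normalization (\ref{singRHMatinftyb}) established in the preceding lemma. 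You instead settle for the cruder $O(k^{-3})$ bound at the origin, kill $d_{-1}$ and $d_{-2}$ with the $\mathbb{Z}_3$ symmetry $\det M(k)=\det M(\omega k)$ coming from (\ref{singRHsymm}), and kill $d_{-3}$ with the proportionality structure. What your route buys is economy of hypotheses: it never invokes (\ref{singRHMatinftyb}) or the lemma giving (\ref{singMatinfty}), only properties (b)--(e) in raw form; what the paper's route buys is that the simple-pole statement $\det M = 1+f/k$ is obtained independently of the symmetry condition. One small imprecision in your write-up: the symmetries (\ref{singRHsymm}) do \emph{not} preserve the column orders at $k=0$ in the other sectors --- conjugation by $\mathcal{A}$ (or by $\mathcal{B}$ together with complex conjugation) permutes rows \emph{and} columns, so the orders $(k^{-2},k^{-1},1)$ are attached to permuted columns in $\bar{D}_2,\dots,\bar{D}_6$. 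This is harmless, since permuting rows and columns leaves $|\det M|$ unchanged, so the bound $\det M = O(k^{-3})$ survives in every sector; alternatively you can avoid the issue entirely by noting $\det M(k)=\det M(\omega k)=\overline{\det M(\bar{k})}$ and reducing all estimates to $\bar{D}_1$.
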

\begin{proof}
The determinant $\det M$ is analytic in $\C \setminus \{0\}$ and approaches $1$ as $k \to \infty$. The assumption (\ref{singRHMat0}) on the behavior as $k \to 0$ implies (by a direct computation) that $\det M$ has at most a simple pole at $k = 0$. Thus,
$$\det M(x,t,k) = 1 + \frac{f(x,t)}{k},$$
for some function $f(x,t)$. On the other hand, (\ref{singMatinfty}) implies that $\det M(x,t,k) = 1+O(k^{-3})$ as $k \to \infty$ and thus $f(x,t) = 0$.
\end{proof}

\begin{proof}[Proof of Proposition \ref{uniquenessprop}]
Suppose $M$ and $N$ are two solutions of the RH problem \ref{RH problem for M}. By Lemma \ref{unitdetlemma}, $\det M$ and $\det N$ are identically equal to one. In particular, the inverse transpose $N^{A}:=(N^{-1})^T$ of the matrix $N$ can be expressed in terms of its minors according to (\ref{cofactordef}). Expanding this expression for $N^A$ as $k \to 0$ in $D_1$ and using (\ref{singRHMat0}), we find 
\begin{align}\label{N1Aat0}
N_1^A(x,t,k) = 
\frac{1}{k^2} 
\begin{pmatrix}
0 & 0 & \star  \\
0 & 0 & \star \\
0 & 0 & \star 
\end{pmatrix} 
+ \frac{1}{k} \begin{pmatrix}
0  & \star & \star \\
0  & \star & \star \\
0  & \star & \star 
\end{pmatrix} 
+  O(1)
\end{align}
as $k \in D_1$ approaches $0$, where $\star$ denotes an entry whose value is irrelevant for the present argument. Similarly, expanding the expression for $N^A$ as $k \to \infty$ and using (\ref{singMatinfty}), we find 
\begin{align}\nonumber
N^A(x,t,k) = &\; I - 
\frac{N_{33}^{(1)}(x,t)}{k} 
\begin{pmatrix} \omega^2 & 0 & 0 \\ 
0 & \omega & 0 \\ 
0 & 0 & 1
\end{pmatrix}
+ \frac{1}{k^2}\Bigg\{(N_{33}^{(1)}(x,t)^2 - N_{33}^{(2)}(x,t))
\begin{pmatrix} \omega & 0 & 0 \\ 
0 & \omega^2 & 0 \\ 
0 & 0 & 1
\end{pmatrix}
	\\ \label{N1Aatinfty}
& + \frac{\tilde{N}^{(2)}_{13}(x,t)}{1-\omega} \begin{pmatrix} 0 & \omega & -\omega^2 \\ 
-1 & 0 & \omega^2  \\ 
1 & -\omega & 0
\end{pmatrix}\Bigg\}
+ O(k^{-3}), \qquad k\to \infty,
\end{align}
where $\tilde{N}^{(2)}_{13} = -(1-\omega)N^{(2)}_{13}$.
By (\ref{singRHMat0}) and (\ref{N1Aat0}), we have, as $k \in \bar{D}_1$ approaches $0$,
\begin{align}\nonumber
MN^{-1} = &\; \Bigg\{\frac{\alpha}{k^2} 
\begin{pmatrix}
\omega & 0 & 0 \\
\omega & 0 & 0 \\
\omega & 0 & 0
\end{pmatrix} 
+ \frac{1}{k} \begin{pmatrix}
\star  & \star & 0 \\
\star  & \star & 0 \\
\star  & \star & 0 
\end{pmatrix} +  \begin{pmatrix}
\star & \star & \epsilon \\
\star & \star & \epsilon \\
\star & \star & \epsilon 
\end{pmatrix} +  O(k)\Bigg\}
	\\ \label{MNinvat0}
& \times \Bigg\{\frac{1}{k^2} 
\begin{pmatrix}
0 & 0 & 0  \\
0 & 0 & 0\\
\star & \star & \star 
\end{pmatrix} 
+ \frac{1}{k} \begin{pmatrix}
0  & 0 & 0 \\
\star  & \star & \star \\
\star  & \star & \star 
\end{pmatrix} 
+  O(1)\Bigg\}
= O(k^{-2}),
\end{align}
showing that $MN^{-1}$ has at most a double pole at $k = 0$.
Since $MN^{-1}$ is analytic for $k \in \C \setminus \{0\}$ and approaches the identity matrix as $k \to \infty$, we conclude that
\begin{align}\label{MNinvBC}
M(x,t,k)N(x,t,k)^{-1} = I + \frac{P(x,t)}{k} + \frac{Q(x,t)}{k^2}
\end{align}
for some matrices $P(x,t)$ and $Q(x,t)$. In fact, keeping track of the terms of order $O(k^{-2})$ in (\ref{MNinvat0}), we see that 
\begin{align}\label{Q1jQ2jQ3j}
Q_{1j} = Q_{2j} = Q_{3j}, \qquad j = 1,2,3.
\end{align}
Furthermore, the symmetries (\ref{singRHsymm}) hold for $M$ and $N$ and hence also for $MN^{-1}$. Thus, by Lemma \ref{expansionatinftylemma}, $P$ and $Q$ have the form
\begin{align*}
&P = \begin{pmatrix} \omega^2 P_{33} & \omega \overline{P_{13}} & P_{13} \\ 
\omega^2 P_{13} & \omega P_{33} & \overline{P_{13}} \\ 
\omega^2 \overline{P_{13}} & \omega P_{13} &  P_{33}
\end{pmatrix}, 
\qquad
 Q = \begin{pmatrix} \omega Q_{33} & \omega^2 \overline{Q_{13}} & Q_{13} \\ 
\omega Q_{13} & \omega^2 Q_{33} & \overline{Q_{13}} \\ 
\omega \overline{Q_{13}} & \omega^2 Q_{13} &  Q_{33}
\end{pmatrix}, 
\end{align*}
where $P_{33}, Q_{33}$ are real-valued and $P_{13}, Q_{13}$ are complex-valued functions. Together with (\ref{Q1jQ2jQ3j}), this implies that
\begin{align}\label{QQ33pmatrix}
Q = Q_{33} \begin{pmatrix} \omega & \omega^2 & 1 \\ \omega & \omega^2 & 1 \\ \omega & \omega^2 & 1 \end{pmatrix}.
\end{align}

On the other hand, substituting the expansion (\ref{singMatinfty}) of $M$ and the transpose of the expansion (\ref{N1Aatinfty}) of $N^A$ into the left-hand side of (\ref{MNinvBC}) and identifying the coefficients of $k^{-1}$ and $k^{-2}$ in the resulting equation, we conclude that
\begin{align}\label{PMexpression}
P = &\; (M^{(1)}_{33} - N^{(1)}_{33})\begin{pmatrix} \omega^2 & 0 & 0 \\ 
0 & \omega & 0 \\ 
0 & 0 & 1
\end{pmatrix},
	\\\nonumber
Q = &\; \Big(M^{(2)}_{33} - M^{(1)}_{33}N^{(1)}_{33} + (N^{(1)}_{33})^2 - N^{(2)}_{33}\Big)\begin{pmatrix} \omega & 0 & 0 \\ 
0 & \omega^2 & 0 \\ 
0 & 0 & 1
\end{pmatrix}
	\\ \label{QMexpression}
& + \frac{\tilde{M}^{(2)}_{13}(x) - \tilde{N}^{(2)}_{13}(x)}{1-\omega}\begin{pmatrix} 0 & 1 & -1 \\ 
-\omega & 0 & \omega \\ 
\omega^2 & -\omega^2 & 0
\end{pmatrix}.
\end{align}
In order to reconcile the equations (\ref{QMexpression}) and (\ref{QQ33pmatrix}), we must have $Q = 0$, and then (\ref{MNinvBC}) becomes
$$M(x,t,k)N(x,t,k)^{-1} = I + \frac{M^{(1)}_{33}(x,t) - N^{(1)}_{33}(x,t)}{k}\begin{pmatrix} \omega^2 & 0 & 0 \\ 
0 & \omega & 0 \\ 
0 & 0 & 1
\end{pmatrix}.$$
Since the determinant of the left-hand side is identically equal to one, we conclude that $M^{(1)}_{33} - N^{(1)}_{33} = 0$. This shows that $M = N$ and completes the proof of the proposition.
\end{proof}

\subsection*{Acknowledgements}
We are grateful to Percy Deift and J\"orgen \"Ostensson for valuable discussions, and to Sheehan Olver for introducing us to the ``ApproxFun" Julia package. Support is acknowledged from the European Research Council, Grant Agreement No. 682537, the Swedish Research Council, Grant No. 2015-05430, the G\"oran Gustafsson Foundation, and the Ruth and Nils-Erik Stenb\"ack Foundation.

\bibliographystyle{plain}
\bibliography{is}

\end{document}